\documentclass[11pt,twoside, eqno]{article}

\usepackage{extarrows}
\usepackage{graphics}
\usepackage{epsfig}
\usepackage{color}
\usepackage{verbatim}
\usepackage[displaymath,mathlines,pagewise]{lineno}
\usepackage{amssymb,amsmath,amsfonts,amsthm,color,mathrsfs}
\usepackage[Symbol]{upgreek}
\usepackage{txfonts}
\usepackage[nottoc,notlot,notlof]{tocbibind}
\usepackage[active]{srcltx}
\usepackage{picinpar,graphicx} 
\usepackage{bbm}
\usepackage{soul}
\usepackage[colorlinks, citecolor=blue,pagebackref,hypertexnames=false]{hyperref}

\usepackage{tikz}
\usetikzlibrary{positioning} 
\usepackage{xcolor} 

\allowdisplaybreaks

 \pagestyle{myheadings}\markboth{ {B. Li, J. Li and L. Wu} } {H\"{o}lder regularity, Liouville Theorem  \& Dirichlet problems}

\textwidth=18cm
\textheight=22.5cm
\topmargin=-1cm
\oddsidemargin -0.5cm
\evensidemargin -1cm

\parindent=13pt


\def\dis{\displaystyle}

\def\rr{{\mathbb R}}
\def\rn{{{\rr}^n}}

\def\nn{{\mathbb N}}

\def\fz{\infty}
\def\az{\alpha}
\def\supp{{\mathop\mathrm{\,supp\,}}}

\def\lz{\lambda}
\def\dz{\delta}

\def\bz{\beta}

\def\gz{{\gamma}}
\def\oz{{\omega}}

\def\vz{\varphi}

\def\d{\mathrm{d}}

\def\BMO{{\mathrm{BMO}}}
\def\HMO{{\mathrm{HMO}}}

\def\CMO{{\mathrm{CMO}}}
\def\HCMO{{\mathrm{HCMO}}}

\def\LV{{\mathscr{L}}}

\def\d{\mathrm{d}}

\def\BMO{{\mathrm{BMO}}}
\def\HMO{{\mathrm{HMO}}}

\def\L{{\mathcal{L}}}

\def\r{\right}
\def\lf{\left}

\newtheorem{thmA}{Theorem}

\newcommand{\B}{\mathcal{B}}

\def\eqref#1{(\ref{#1})}

\newtheorem{theorem}{Theorem}[section]
\newtheorem{lemma}[theorem]{Lemma}
\newtheorem{corollary}[theorem]{Corollary}

\theoremstyle{definition}
\newtheorem{remark}[theorem]{Remark}
\newtheorem{definition}[theorem]{Definition}

\numberwithin{equation}{section}

\begin{document}
\arraycolsep=1pt

\title{\Large\bf
H\"{o}lder regularity and Liouville theorem for the Schr\"{o}dinger equation
with certain critical potentials, and applications to Dirichlet problems
\footnotetext{\hspace{-0.35cm}
{\it 2020 Mathematics Subject Classification}.
{Primary 35J10; Secondary 42B35, 43A85.}
\endgraf{\it Key words and phrases.}  Schr\"{o}dinger operator, Reverse H\"{o}lder class, BMO, Metric measure space.
\endgraf $\ast$\, Corresponding author
}}
\author{Bo Li, Ji Li  and Liangchuan Wu$^*$ }
\date{}
\maketitle

\begin{center}
\begin{minipage}{15cm}\small
{
\noindent
{\bf Abstract.}
{Let $(X,d,\mu)$ be a metric measure space satisfying a doubling property with the upper/lower dimension $Q\ge n>1$, and admitting an $L^2$-Poincar\'e inequality.
In this article, we establish the H\"{o}lder continuity and a Liouville-type theorem for the (elliptic-type) Schr\"odinger equation
$$
   \mathbb L u(x,t)=-\partial^2_{t} u(x,t)+\mathcal L u(x,t)+V(x) u(x,t)=0,\quad x\in X,\, t\in \mathbb R,
$$
where $\mathcal L$ is a non-negative operator generated by a Dirichlet form on $X$, and the non-negative potential $V$  is a  Muckenhoupt weight belonging to
the  reverse H\"older class ${RH}_q(X)$ for some $q> \max\{Q/2,1\}$.
Note that $Q/2$ is critical for the regularity theory of $-\Delta+V$ on $\mathbb{R}^Q$ ($Q\ge3$) by Shen's work [\textit{Ann. Inst. Fourier (Grenoble)}, 1995],
which hints the critical index of $V$ for the regularity results above on  $X\times \mathbb R$ may be $(Q+1)/2$.
Our results show that this critical index is in fact $\max\{Q/2,1\}$.

Our approach primarily relies on the controllable growth of $V$ and the elliptic theory for the operator $\mathbb L$/$-\partial^2_{t}+\mathcal{L}$ on $X\times \mathbb R$,
rather than the analogs for $\mathcal L+V$/$\mathcal{L}$ on $X$, under the critical index setting.
On the one hand, the size condition and H\"older estimate for the Green function on $X\times \mathbb R$ are crucial
for the H\"older continuity of solutions to $ \mathbb L u=0$ in $X\times \mathbb{R}$.
On the other hand, we exploit the Liouville theorem
via establishing various mean value properties (with an exponential decay) for $\mathbb L$-harmonic functions on $X\times \mathbb{R}$.
Moreover, we give another Liouville theorem for $\mathbb L$-harmonic functions on $X\times \mathbb{R}_+$ by the aid of the Poisson kernel,
which indicates the different analysis tool on $X\times \mathbb{R}$ and $X\times \mathbb{R}_+$.

As applications, we further obtain some characterizations for solutions to the Schr\"odinger equation
 $-\partial^2_{t} u(x,t)+\mathcal L u(x,t)+V(x) u(x,t)=0$
  in $X\times \mathbb R_+$ with boundary values
   in BMO/CMO/Morrey spaces related to $V$, improving previous results to the critical index $q> \max\{Q/2,1\}$.
}
}
\end{minipage}
\end{center}


\section{Introduction}
\hskip\parindent%
%
%
One of the essential and most important questions in partial differential equations (PDEs) is the regularity of the solution.
For instance, Hilberts's XIXth problem (1900) asked: roughly speaking, whether all solutions to uniformly elliptic variational PDEs are smooth.
This question was answered positively by De Giorgi (1956), Nash (1957) and Moser (1961), respectively,
which is now one of the most famous and vital theorems in PDEs.
Subsequent works contributed by Nirenberg, Caffarelli, Krylov, Evans, Figalli, and many others,
 have enormously influenced many areas of Mathematics linked one way or another with PDE, including:
Harmonic Analysis, Probability Theory, Calculus of Variations, Differential Geometry,  Geometric Measure Theory,  Potential theory
and Mathematical Physics.
Among these, the Liouville theorem and its generalized versions 
serve as a cornerstone in the study of elliptic PDEs, particularly in various contexts involving existence, non-existence, uniqueness and regularity properties.

When extending the Laplace operator to the Schr\"odinger operator $L=-\Delta +V$ on $\mathbb R^n$ with $n\geq 3$,
the corresponding  regularity theory becomes more complicated due to the presence of the potential $V$,
where   $V\geq 0$ belongs to the reverse H\"older class ${RH}_q(\mathbb R^n)$ for some $q> n/2$ in the sense that  $V\in L_{\text{loc}}^q(\mathbb R^n)$ and there exists a constant $C>0$ such that the reverse H\"older inequality
$$
    \left(\frac{1}{|B|}\int_B V(x)^q \d x\right)^{1/q	}\leq \frac{C}{|B|}\int_B V(x)\d x
$$
holds for all balls $B$ in $\mathbb R^n$.
In 1995, Shen \cite{Sh1995} proved that if $V\in {RH}_q(\mathbb R^n)$ for some $q> n/2$,
then the solution $u$ to the Schr\"{o}dinger equation $(-\Delta+V)u=0$ is H\"{o}lder continuous.
Moreover, by combining the upper estimate on the fundamental solution of $-\Delta +V$ with the subharmonic property of $|u|$ (see \cite{Sh1995, Sh1999}),
Duong--Yan--Zhang \cite{DYZ2014} established the following Liouville theorem:
if a weak solution to the Schr\"odinger equation $Lu=(-\Delta +V)u=0$ in $\rn$ satisfies
\begin{equation}\label{eqn:Liouville-condition-DYZ}
	  \int_{\mathbb R^n} \frac{|u(x)|}{1+|x|^{n+s}} \d x<\infty
\end{equation}
for some $s>0$, then $u= 0$ in $\mathbb R^n$, where $V\in RH_q(\rn)$ for some $q> n/2$.

%
%

These regularity results are crucial for Dirichlet problems in the upper half-space with traces in several typical function spaces.
We will illustrate this topic when the trace belongs to the endpoint space $\rm BMO$,
the classical space of bounded mean oscillation introduced by John--Nirenberg in 1961.
Let's begin by considering the case $V= 0$ and recalling
the Carleson measure characterization for solutions to the Laplace equation in the upper half-space with boundary values in BMO.
It's well-known from the seminal work  of Fefferman--Stein \cite{FS1972} that a function $f$ is in ${\rm BMO}(\mathbb R^n)$
if and only if  its  Poisson integral  (denoted by $u$)  satisfies the Carleson measure condition
\begin{equation}\label{eqn:Carleson}
	\sup _{B} \frac{1}{|B|} \int_0^{r_B} \int_{B}|t \nabla u(x, t)|^2 \d x \frac{\d t}{t}<\infty,
\end{equation}
where $\nabla=(\nabla_x,\partial_t)$. This is the key to proving the dual of the Hardy space $H^1(\rn)$ is ${\rm BMO}(\rn)$.
 Furthermore, let ${\rm HMO}(\mathbb R_+^{n+1})$ be the space of all harmonic functions on $\mathbb R_+^{n+1}$ satisfying \eqref{eqn:Carleson}.
The subsequent work  by Fabes--Johnson--Neri \cite{FJN1976}
showed that any $u\in {\rm HMO}(\mathbb R_+^{n+1})$ can be recovered from a trace in ${\rm BMO}(\mathbb R^n)$ via its Poisson integral.
 Therefore, their result can be regarded as
$$
     {\rm HMO}(\mathbb R_+^{n+1})=e^{-t\sqrt{-\Delta}}{\rm BMO}(\mathbb R^n).
$$

For general $V\in RH_q(\rn)$ with $q>n/2$, we say a locally integrable function $f$   belongs to  ${\rm BMO}_{L}(\mathbb R^n)$,
the space of BMO associated to $L=-\Delta+V$ (see \cite{DGMTZ2005}),
if  $f$ satisfies
$$     \|f\|_{{\rm BMO}_{L}(\mathbb{R}^n)}=\sup_{B: r_B<\rho(x_B)}\frac{1}{|B|} \int_{B}|f(y)-f_{B}|\d y
      +\sup_{B: r_B\geq \rho(x_B)} \frac{1}{|B|} \int_{B}|f(y)| \d y <\infty,$$
where 
$$
 \rho(x)=\sup \left\{r>0: \frac{1}{r^{n-2}} \int_{B(x, r)} V(y) \d y \leq 1\right\}.
$$
This ${\rm BMO}_{L}(\rn)$ is a proper subspace of ${\rm BMO}(\mathbb R^n)$. When $V= 1$, it is the ${\rm bmo}$ space  introduced by Goldberg \cite{Go1}.  Similarly, let ${\rm HMO}_L(\mathbb R_+^{n+1})$ be the space of all solutions to $-\partial_t^2 u(x,t)+ Lu(x,t)=0$ on $\mathbb R_{+}^{n+1}$ satisfying \eqref{eqn:Carleson}.

As a straightforward consequence of the above Liouville theorem associated to  $L$, it was  proven in \cite{DYZ2014} that when $V\in  RH_q(\rn)$ for some $q>(n+1)/2$,
%
 for any  $u\in {\rm HMO}_L(\mathbb R_+^{n+1})$ with $C^1$-regularity,
 there exists some $f\in {\rm BMO}_L(\mathbb R^n)$ such that $u(x,t)=e^{-t\sqrt{L}}f(x)$, which
 extends the result of Fabes--Johnson--Neri \cite{DYZ2014}.
 Recently, the above Liouville theorem and its application to the Poisson integral representation were extended
 to manifolds as well as general metric spaces by Jiang--Li in \cite{JL2022}.
Conversely, for any $f\in {\rm BMO}_{L}(\mathbb R^n)$,
the Poisson integral $u=e^{-t\sqrt{L}}f$  belongs to $ {\rm HMO}_{L}(\mathbb R_+^{n+1})$ whenever $V\in  RH_q(\rn)$ with $q>n$ (see \cite{DYZ2014}), which was later  improved into  $q> (n+1)/2$ (see \cite{JL2022}). Eventually, it was shown that, if $V\in  RH_q(\rn)$ for some $q>(n+1)/2$, then
\begin{equation}\label{eqn:BMOL-Dirichlet-22}
       {\rm HMO}_L(\mathbb R_+^{n+1})=e^{-t\sqrt{L}}{\rm BMO}_L(\mathbb R^n).
\end{equation}

We clarify the occurrence of $q> (n+1)/2 $ in the  above result.
To recover $u\in {\rm HMO}_L(\mathbb R_+^{n+1})$ by a trace in ${\rm BMO}_{L}(\rn)$,  one key intermediate step is to demonstrate that  for every $k\in\mathbb N$, $u(x,t+1/k)$ can be represented by $e^{-t\sqrt{L}}(u(\cdot, 1/k))(x)$ for $(x,t)\in \mathbb R_+^{n+1}$ (note that when $V= 0$, this fact was obtained in \cite{FJN1976} by the maximal principle for harmonic functions, bypassing the Liouville theorem),  where  the index $q>(n+1)/2$ 
 arises   from the following observations:
\begin{itemize}
	\item [(i)]
  The function	$w(x,t)=u(x,t+1/k)-e^{-t\sqrt{L}}(u(\cdot, 1/k))(x)$ satisfies $-\partial_t^2w(x,t)+ Lw(x,t)=0$
  in the upper half-space $\mathbb R_{+}^{n+1}$ with zero trace,
  which can be extended canonically  to $\overline w$
  such that $ (-\partial_{t}^2-\Delta) \overline w(x,t)  +V(x)\overline w(x,t)=(-\Delta_{\mathbb R^{n+1}}+ V(x)) \overline w(x,t)=0$
  in $\mathbb R^{n+1}$, and one can verify that
	$\overline w\in L^1   (\mathbb R^{n+1}, (1+|x|)^{4(n+1)}\d x)$.
	

\item[(ii)] The natural extension $V(x,t)=V(x)$ guarantees that it also belongs to  $RH_ q(\mathbb R^{n+1})$ for some $q>(n+1)/2$.

\end{itemize}
Hence,  applying  the Liouville theorem above (adapted for $\mathbb R^{n+1}$) to the  Schr\"odinger operator $-\partial_{t}^2+L$  on $\mathbb R^{n+1}$, it follows from (i) that  $\overline w= 0$ whenever  $q> (n+1)/2$, as desired.

Remarkably,  as shown in Shen's work \cite{Sh1995},
the reverse H\"older index $n/2$ of $V$ is critical for the regularity theory of the operator $L=-\Delta +V$ on $\mathbb R^n$.
From this perspective, the index $q> (n+1)/2$ seems to be optimal for ensuring the H\"{o}lder continuity
and the Liouville property of solutions to $-\partial_t^2u+L u=0$ in $\mathbb R_+^{n+1}$,
as these are the only aspects that are sensitive to the reverse H\"older index $q$.
 However, the strategies in \cite{DYZ2014, JL2022} primarily involve transferring regularity results for $L=-\Delta+V$ in $\mathbb R^n$
 to the Schr\"odinger equation $-\partial_t^2u+Lu=0$ in the higher dimensional space $\mathbb R_+^{n+1}$.
 Consequently, the condition $q>(n+1)/2$ exists only for technical reasons, and hence may not be sharp.

A natural question, inspired by the Dirichlet problem in the upper half-space, is whether
one can establish a suitable H\"{o}lder estimate and an improved  Liouville theorem for the Schr\"odinger equation
when the potential $V$ satisfying the critical reverse H\"older index $q> n/2$.

Obviously, approaches in  \cite{DYZ2014, JL2022, Sh1999}
are not applicable for such regularity results.
More precisely, Duong--Yan--Zhang, Jiang--Li, and Shen first established
some regularity results for the Schr\"{o}dinger equation on $X$/$\rn$, such as the H\"{o}lder continuity and the Liouville theorem.
When considering the Dirichlet problem on $\rr^{n+1}_+$, the natural extension $V(x,t)=V(x)$ for $(x,t)\in \rr^{n+1}_+$
 falls into $RH_q(\rr^{n+1})$ for some $q>(n+1)/2$ immediately
to ensure that these regularity results mentioned above are still valid.
However, this $(n+1)/2$ does not seem to be sharp, since the potential $V$ is in fact time independent.


In this article, we address this problem in the context of a metric space $X$ as in \cite{JL2022}
to enlarge the applicable fields; see Section \ref{pre} for precise  definitions. We establish a revised version of the H\"{o}lder continuity and the Liouville theorem on $\rr^{n+1}$ directly.
Our approach primarily relies on the controllable growth of $V$ and the theory of the elliptic operator on $\mathbb{R}^{n+1}$,
rather than the analogs on $\rn$, under the critical index setting; see Sections \ref{ho} and \ref{improved} for more details.

Our first main result is the following H\"{o}lder regularity and Liouville theorem for the Schr\"odinger equation in the product space $X\times \mathbb R$,
where the potential satisfies the critical reverse H\"older index.

 \begin{thmA}  \label{thm:Liouville}
Let $(X,d,\mu,\mathscr{E})$ be a complete Dirichlet metric measure space
satisfying a doubling property with the upper/lower dimension $Q\ge n>1$, and admitting an $L^2$-Poincar\'{e} inequality.
Assume that $u$ is a weak solution to the elliptic equation
$$
\mathbb{L}u=-\partial_{t}^2u+\LV u=-\partial_{t}^2u+\L u+V u=0
$$
in $X \times \rr$,
where $\mathcal L$ is a non-negative operator generalized by a Dirichlet form on $X$,
and $0\le V\in A_\infty(X)\cap RH_q(X)$ for some $q> \max\{Q/2,1\}$. Then we obtain the following.
\begin{enumerate}
  \item[{(i)}] (H\"{o}lder regularity\footnote{To show the H\"{o}lder regularity of a weak solution to the elliptic equation $\mathbb{L}u=0$ in $X\times \rr$,
  we need to assume that the lower dimension $n\ge2$ and hence $\max\{Q/2,1\}=Q/2$ by $Q\ge n$. However, under the assumption $n>1$, another H\"{o}lder regularity is provided in Section \ref{sec4.3}.})
There exists a constant $0<\az<1$ such that for any $(x,t),(y,s)\in B_0=B((x_0,t_0),R)\subset X\times\rr$,
$$|u(x,t)-u(y,s)|\le C{\left(\frac{d((x,t),(y,s))}{R}\right)^\az}\left(1+R^2\fint_{B(x_0,R)} V\d\mu\right) \sup_{2B_0}|u|.$$
 \item[{(ii)}] (Liouville theorem) If there exist some $(x_0,t_0)\in X\times \rr$ and $\bz>0$ such that
\begin{align}\label{global-control-new}
\int_{X\times\rr} \frac{|u(x,t)|^2}{ \big  (1+d((x,t),(x_0,t_0))\big )^{\bz}\mu\big  (B((x_0,t_0),1+d((x,t),(x_0,t_0)))\big )}\d\mu(x)\d t\le C{(x_0,t_0,\bz)}<\infty,
\end{align}
then $u\equiv0$ in $X\times \rr$.
\end{enumerate}
 \end{thmA}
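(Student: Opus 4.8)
The plan is to treat the two parts separately, with the elliptic operator $\mathbb{L}$ on the product space $X\times\mathbb{R}$ as the central object rather than $\mathcal{L}+V$ on $X$. For part (i), the first step is to establish that the potential $V$, viewed on $X\times\mathbb{R}$ via $\widetilde V(x,t)=V(x)$, still has \emph{controllable growth} in the sense needed: since $V\in A_\infty(X)\cap RH_q(X)$, one shows that the local average $r^2\fint_{B(x_0,r)}V\,d\mu$ grows at most polynomially in $r$, and, crucially, that the associated auxiliary function $\rho$ on $X$ (or its product-space analog) is well-behaved. The second step is to freeze the zeroth-order term: write $\mathbb{L}u=0$ as $(-\partial_t^2+\mathcal{L})u=-Vu$, so that $u$ solves a pure ``elliptic + potential'' equation on $X\times\mathbb{R}$, a space which by hypothesis inherits the doubling property (with dimension $Q+1$) and an $L^2$-Poincar\'e inequality from $(X,d,\mu)$. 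The third step is then to run the De Giorgi--Nash--Moser machinery for $-\partial_t^2+\mathcal{L}$ on $X\times\mathbb{R}$: one proves an interior Caccioppoli/reverse-H\"older inequality for $u$ in which the potential term is absorbed using the $RH_q$ bound on $V$ together with the Sobolev inequality on $X\times\mathbb{R}$ (this is where $q>\max\{Q/2,1\}$, equivalently $q>(Q+1)/2-\tfrac12$, is exactly what makes the potential term a \emph{subcritical} perturbation of the Laplacian on the $(Q+1)$-dimensional product space — the whole point of the paper). An iteration then yields local boundedness and finally the H\"older estimate with the stated dependence on $1+R^2\fint_{B(x_0,R)}V\,d\mu$. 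The main obstacle here is the absorption step: one must show the $RH_q$ class on $X$ gives a good enough integrability gain against the Sobolev exponent of the \emph{product} space, and keep careful track of constants so that the final bound depends on $V$ only through the single local average appearing in the statement.

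For part (ii), the strategy is the classical ``growth control $+$ mean value property $\Rightarrow$ Liouville'' scheme, but executed with $\mathbb{L}$-harmonic functions on $X\times\mathbb{R}$. First I would square the equation: from $\mathbb{L}u=0$ and $V\ge0$ one obtains, via Kato's inequality (or a direct computation with the Dirichlet form), that $|u|^2$ or $|u|$ is a subsolution of $-\partial_t^2+\mathcal{L}$ on $X\times\mathbb{R}$, i.e. $(-\partial_t^2+\mathcal{L})|u|\le -V|u|\le 0$ in the weak sense; more precisely one should get a \emph{mean value inequality with exponential decay} reflecting the presence of $V$, of the form
$$
|u(x_0,t_0)|^2\le \frac{C}{\mu(B((x_0,t_0),r))}\int_{B((x_0,t_0),r)}|u(x,t)|^2\,d\mu(x)\,dt
$$
improved by a factor that decays as $r$ grows, coming from the nontriviality of $V$ on a positive-measure set. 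Next, combine this with the Green function / size estimates for $\mathbb{L}$ on $X\times\mathbb{R}$ that part (i) is built on, to upgrade the mean value inequality to one over all sufficiently large balls centered anywhere. Finally, feed the global integrability hypothesis \eqref{global-control-new} into the mean value inequality over balls $B((x_0,t_0),r)$ with $r\to\infty$: the right-hand side is bounded by a constant times $r^{\beta}\cdot\big(\text{tail of the weighted }L^2\text{ integral}\big)$ which is finite, while the exponential-decay factor kills the polynomial growth, forcing $|u(x_0,t_0)|=0$; since $(x_0,t_0)$ was arbitrary (up to a covering argument translating the decay to every point), $u\equiv0$.

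The technical heart of part (ii) — and what I expect to be the single hardest point — is establishing the mean value property \emph{with genuine exponential decay} for $\mathbb{L}$-harmonic functions on $X\times\mathbb{R}$ under only the critical assumption $V\in RH_q(X)$, $q>\max\{Q/2,1\}$. On $\mathbb{R}^n$ this is typically extracted from sharp pointwise bounds on the fundamental solution of $-\Delta+V$ (Shen, Kurata), which rely on the auxiliary function $\rho$ and its regularity; on a general Dirichlet metric measure space one must instead derive the exponential-decay mean value inequality from the controllable growth of $V$ and the parabolic/elliptic Harnack inequality for $-\partial_t^2+\mathcal{L}$ on $X\times\mathbb{R}$, iterating the potential gain across dyadic annuli. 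Getting the decay to be genuinely exponential (rather than merely polynomial), uniformly in the center, while only paying the critical integrability of $V$, is the crux; once that is in hand, the Liouville conclusion follows by the elementary limiting argument sketched above, and part (i) provides the remaining quantitative ingredients (local boundedness and Green function estimates) that make the covering/translation step legitimate.
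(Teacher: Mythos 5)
The proposal for part (i) contains a genuine gap in the central mechanism. You propose to run De Giorgi--Nash--Moser for $-\partial_t^2+\mathcal{L}$ on the product space and to absorb the term $Vu$ via the reverse-H\"older property of $V$ together with the Sobolev inequality on $X\times\mathbb{R}$, asserting that $q>Q/2$, ``equivalently $q>(Q+1)/2-\tfrac{1}{2}$'', is subcritical for the $(Q+1)$-dimensional product. This arithmetic identity buys nothing: the DGNM absorption step on a space of homogeneous dimension $Q+1$ requires $V\in L^p_{\mathrm{loc}}$ with $p>(Q+1)/2$, and $q>Q/2$ sits strictly \emph{below} that threshold, so the iteration does not close. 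That is precisely the obstruction the earlier works \cite{DYZ2014,JL2022} ran into, and the whole point of the paper is to circumvent it. The paper does \emph{not} treat $\widetilde V(x,t)=V(x)$ as a generic $(Q+1)$-dimensional potential. Instead it represents $u\phi$ via the Green function $G_{\mathbb{L}-V}$ of the time-translation-invariant operator $-\partial_t^2+\mathcal{L}$ on $X\times\mathbb{R}$, and then, in the potential term $I_3$, integrates explicitly in the time variable $r$ first (producing a harmless logarithmic factor because $n\ge 2$), which collapses the estimate to a $Q$-dimensional integral against $V$ on $X$. That integral is then controlled by the Kato condition for $V$ on $X$, whose validity requires only $q>Q/2$. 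The $t$-independence of $V$ is therefore exploited \emph{analytically inside the Green-function computation}, not via a gain of integrability; without this step your proposal would force $q>(Q+1)/2$ and miss the stated improvement. You also get the form of the final bound wrong: the factor $1+R^2\fint_{B(x_0,R)}V\,d\mu$ does not emerge from a DGNM iteration constant but directly from the Kato estimate applied to the $I_3$ term.

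For part (ii) your strategy (subsolution property, mean value inequality with decay, then send the radius to infinity against the weighted $L^2$ growth condition) is in the right spirit, but the decisive step you flag as ``the single hardest point'' is in fact already available in closed form and you do not identify it. The paper does not extract the decay from a Harnack iteration over dyadic annuli; the main proof instead uses the Green function $G_{\mathbb{L}}$ of the \emph{full} operator $-\partial_t^2+\mathcal{L}+V$ on $X\times\mathbb{R}$, whose size estimate inherits the factor $(1+R/\rho(x_1))^{-N}$ (for every $N>0$) from the Gaussian upper bound with $\rho$-decay of the heat kernel of $\mathscr{L}=\mathcal{L}+V$ on $X$, tensored with the Gaussian kernel on $\mathbb{R}$. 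The mean value inequality then follows from a single Green-function representation with Caccioppoli, not from iterating potential gain. The paper's Remark after the mean-value lemma does sketch an alternative closer to your vision (Fefferman--Phong plus Caccioppoli, iterated), so your route is plausible in principle, but you would still need to actually prove the uniform decay estimate and you provide no concrete mechanism for doing so. Also note that the decay is of polynomial type $(1+R/\rho)^{-N}$ for every $N$ rather than a literal exponential, which suffices since one only needs $N>\beta$ to beat the polynomial growth allowed by \eqref{global-control-new}. Finally, your proposal leans on ``part (i) provides the remaining quantitative ingredients,'' but the Green function used in part (ii) is $G_{\mathbb{L}}$, not $G_{\mathbb{L}-V}$; the paper keeps the two separate and your write-up conflates them.
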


%

Note that when $X=\mathbb R^n$, $V\in {RH}_q(\mathbb R^n)$ implies that  $V\in A_\infty(\mathbb R^n)$ (see \cite{ST1989} for example).
For the classical Schr\"odinger operator $-\Delta +V$ on $\mathbb R^n$ equipped with the Lebegue measure, then $Q=n$ and the condition \eqref{global-control-new} is indeed the $(n+1)$-dimensional version of \eqref{eqn:Liouville-condition-DYZ},
hence our Theorem \ref{thm:Liouville} extends and improves the H\"{o}lder continuity and the Liouville theorem for the Schr\"{o}dinger operator $-\Delta_{\mathbb R^{n+1}}+V$,
weakening the reverse H\"older index requirement for the potential $V$  from   $q> (n+1)/2$ in \cite{DYZ2014} to the critical case $q> n/2$,
and relaxing the dimensional constraint from $n\geq 3$ to $n\geq 2$ (see Remarks \ref{rem:dim-relax1}
and \ref{rem:dim-relax2} for the discussion on the dimension).
Additionally, Theorem \ref{thm:Liouville} is also applicable when  $\mathcal L$ is a uniformly elliptic operator $-{\rm div}A\nabla$, where $A=A(x)$ is an $n\times n$ matrix of real symmetric, bounded measurable coefficients and satisfies the uniformly elliptic condition.

Utilizing some basic  properties of the product space $X\times\rr$,
our argument for Theorem \ref{thm:Liouville} fully exploits
the H\"older continuity of the Green function on $X\times \rr$ (not $X$) and the reverse H\"{o}lder property of $V$.
This allows us to derive important ingredients that any $\mathbb{L}$-harmonic function is H\"older continuous (see Theorem \ref{holder} and its remarks)
and satisfies an appropriate upper bound with exponential decay (see Lemma \ref{mean}).
Unlike previous methods that rely on the regular estimates of the Poisson equation and the fundamental solution of $\LV$ on $X$,
here our strategy addresses these issues directly on $X\times \mathbb R$, avoiding the constrained increase of the
 reverse H\"older index by $1/2$.

With the help the regularity results in the critical sense,
the second part of this article is dedicated to applications in Dirichlet problem
for the Schr\"odinger equation in the upper half-space $X\times \mathbb R_+$,
where the potential  $V$ is also critical that belongs to $A_\infty(X)\cap RH_{q}(X)$ for some $q>\max\{ Q/2,1\}$.

\smallskip

We start by considering the ${\rm BMO}_{\LV}$-Dirichlet problem.
Concretely, 
define the harmonic mean oscillation space ${\rm HMO}_{\LV}(X\times \mathbb R_+)$ as the class of all the $\mathbb{L}$-harmonic functions  on $X\times \mathbb R_+$
(i.e., weak solutions to $\mathbb{L}u=-\partial_{t}^2 u+\L u+Vu=0$ in $X\times \mathbb R_+$) satisfying
\begin{equation}\label{eqn:HMO-LV}
    \|u\|_{{\rm HMO}_{\LV}}=\sup_{B}\mathcal C_{u,B}
    = \sup_{B}\left(\frac{1}{\mu(B)}   \int_0^{r_B} \int_{B} |t\nabla u|^2 \d \mu \frac{\d t}{t}\right)^{1/2}<\infty.
\end{equation}

 \begin{thmA}  \label{thm:BMO-Dirichlet}
 Let all assumptions and notation be the same as in Theorem \ref{thm:Liouville}. 
Then $u\in \mathrm{HMO}_{\LV}(X\times \rr_+)$ if and only if $u=e^{-t\sqrt{\LV}}f$ for some $f\in \BMO_\LV(X)$.
Moreover, it holds that 
$$ \|u\|_{\HMO_{\LV}(X\times \rr_+)} \approx\|f\|_{\BMO_\LV(X)}.$$
%
 \end{thmA}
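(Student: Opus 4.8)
The plan is to prove the two inclusions of the identity $\mathrm{HMO}_{\LV}(X\times\rr_+)=e^{-t\sqrt{\LV}}\BMO_\LV(X)$ together with the norm equivalence, following the classical Fabes--Johnson--Neri scheme but with the Schr\"odinger operator $\LV$ in place of the Laplacian.

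\smallskip
\emph{Step 1 (the ``easy'' direction).} First I would show that if $f\in\BMO_\LV(X)$ then $u=e^{-t\sqrt{\LV}}f$ belongs to $\mathrm{HMO}_{\LV}(X\times\rr_+)$ with $\|u\|_{\HMO_{\LV}}\lesssim\|f\|_{\BMO_\LV(X)}$. This is a Carleson-measure estimate: one decomposes $f=f_{B}\,\mathbf 1_{2B}+(f-f_{B})\mathbf 1_{2B}+f\mathbf 1_{(2B)^c}$ (or the $\rho$-adapted analogue when $r_B\ge\rho(x_B)$), uses the $L^2$-boundedness and the kernel bounds for $t\nabla e^{-t\sqrt{\LV}}$ (Gaussian/Poisson-type off-diagonal decay, together with the extra decay coming from $V$ encoded through the critical function $\rho$) to handle each piece, and sums a geometric/telescoping series over the annuli $2^{j+1}B\setminus2^jB$. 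The doubling property and the reverse-H\"older property of $V$ are exactly what make these sums converge; this part does not need the new critical-index regularity theory, only the standard heat/Poisson kernel estimates for $\LV$ on $X$, so I would cite the relevant estimates and sketch the three-piece decomposition.

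\smallskip
\emph{Step 2 (the ``hard'' direction).} Conversely, given $u\in\mathrm{HMO}_{\LV}(X\times\rr_+)$ I must produce a boundary trace $f\in\BMO_\LV(X)$ with $u=e^{-t\sqrt{\LV}}f$ and $\|f\|_{\BMO_\LV(X)}\lesssim\|u\|_{\HMO_{\LV}}$. The core lemma is the self-improvement/reproducing identity: for every $k\in\nn$,
\begin{equation*}
u(x,t+1/k)=e^{-t\sqrt{\LV}}\big(u(\cdot,1/k)\big)(x),\qquad (x,t)\in X\times\rr_+.
\end{equation*}
To get this, set $w(x,t)=u(x,t+1/k)-e^{-t\sqrt{\LV}}(u(\cdot,1/k))(x)$; this $w$ solves $\mathbb{L}w=0$ in $X\times\rr_+$ with zero boundary trace, so by the canonical odd reflection in $t$ it extends to $\overline w$ solving $\mathbb{L}\overline w=-\partial_t^2\overline w+\L\overline w+V\overline w=0$ on all of $X\times\rr$. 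The Carleson-measure bound on $u$ (via Fefferman--Stein/tent-space estimates) yields a quantitative growth bound on $\overline w$ of the form required in \eqref{global-control-new}, namely
\begin{equation*}
\int_{X\times\rr}\frac{|\overline w(x,t)|^2}{(1+d((x,t),(x_0,t_0)))^{\beta}\,\mu(B((x_0,t_0),1+d((x,t),(x_0,t_0))))}\,\d\mu(x)\,\d t<\infty
\end{equation*}
for a suitable $\beta>0$. Here is the crucial point where the paper's improvement enters: since $V(x,t)=V(x)$ is time-independent it lies in $A_\infty(X\times\rr)\cap RH_q(X\times\rr)$ for the \emph{same} $q>\max\{Q/2,1\}$, so Theorem~\ref{thm:Liouville}(ii) applies directly on the product space and forces $\overline w\equiv0$, hence $w\equiv0$, giving the reproducing identity without ever needing $q>(Q+1)/2$. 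Once this identity is in hand, $f_k:=u(\cdot,1/k)$ is a Cauchy-type family; one shows $\{f_k\}$ is bounded in $\BMO_\LV(X)$ uniformly in $k$ (again via the Carleson bound and a Poincar\'e/oscillation estimate translating the tent-space norm of $u$ into the mean oscillation of the trace), extracts a weak-$\ast$ limit $f\in\BMO_\LV(X)$, and passes to the limit in $u(x,t+1/k)=e^{-t\sqrt{\LV}}f_k(x)$ to conclude $u=e^{-t\sqrt{\LV}}f$.

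\smallskip
\emph{Step 3 (norm equivalence).} Combining the quantitative bounds from Steps 1 and 2 gives $\|u\|_{\HMO_{\LV}}\approx\|f\|_{\BMO_\LV(X)}$.

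\smallskip
The main obstacle is Step 2, and specifically two sub-issues: (a) verifying that a function with the Carleson-measure control \eqref{eqn:HMO-LV} — which is a local condition, $\int_0^{r_B}$ — genuinely satisfies the \emph{global} weighted $L^2$ growth bound \eqref{global-control-new} needed to invoke the Liouville theorem; this requires a careful patching of the Carleson bound on a sequence of expanding balls together with interior elliptic estimates (Theorem~\ref{holder} and the mean-value/exponential-decay Lemma~\ref{mean}) to control $\overline w$ away from the boundary, and it is where the critical-index H\"older regularity of $\mathbb{L}$-harmonic functions is genuinely used. (b) Proving the uniform $\BMO_\LV$ bound for the traces $f_k$ and the convergence of the Poisson extensions — here one must be careful with the two-sup structure of $\|\cdot\|_{\BMO_\LV}$ (the John--Nirenberg part for $r_B<\rho(x_B)$ versus the averaging part for $r_B\ge\rho(x_B)$) and use that $e^{-t\sqrt{\LV}}$ reproduces the scale $\rho$ correctly. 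Everything else — the reflection argument, the geometric-series estimates in Step 1, and the final equivalence — is routine once these two points are settled.
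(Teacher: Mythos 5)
Your proposal follows essentially the same route as the paper's proof: Step~1 is the known Carleson-measure estimate for $e^{-t\sqrt{\LV}}$ data in $\BMO_\LV(X)$; Step~2 establishes the reproducing identity $u(\cdot,t+s)=e^{-t\sqrt{\LV}}u(\cdot,s)$ via reflection across $t=0$ and the critical-index Liouville theorem (the paper offers either the reflection-to-$X\times\rr$ route through Theorem~\ref{Liouville}, or the half-space Theorem~\ref{Liouville1} directly), and then extracts a weak-$\ast$ limit; Step~3 combines the two bounds. One point you gloss over, which the paper handles explicitly, is that weak-$\ast$ convergence of $\{u(\cdot,s_k)\}$ in $\BMO_\LV$ only gives $u(x,t)=e^{-t\sqrt{\LV}}f(x)+h(x)$ for some $t$-independent residual $h$ via Hardy--$\BMO$ duality, and a further Liouville argument is needed to show $h\equiv 0$; this is a genuine extra step, not mere bookkeeping, but it does not change the overall structure of the argument.
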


This result improves the characterization \eqref{eqn:BMOL-Dirichlet-22} on the ${\rm BMO}_{\LV}$-Dirichlet problem to the critical index $q> \max\{ Q/2,1\}$.
Theorem \ref{thm:BMO-Dirichlet}  is the ultimate analogue of the classical result ${\rm HMO}(\mathbb R_+^{n+1})=e^{-t\sqrt{-\Delta}}{\rm BMO}(\mathbb R^n)$
for the Laplacian by Fabes-Johnson-Neri, after previous works  \cite{DYZ2014, JL2022} in this topic.

Furthermore, when $X=\mathbb R^n$,
Song--Wu \cite{SW2022} studied the CMO-Dirichlet for the Laplace equation with a potential $V$ from ${RH}_q(\rn)$ for some $q> (n+1)/2$.
Here the trace space ${\rm CMO}_L(\rn)$, the space of vanishing mean oscillation associated to $L=-\Delta +V$,
is a proper subspace of ${\rm BMO}_L(\rn)$ and  is the pre-dual space of the Hardy space $H_L^1(\mathbb R^n)$.
Based on Theorem \ref{thm:BMO-Dirichlet} with   $X=\mathbb R^n$, we can extend with the same method in \cite{SW2022}
(relying heavily on two new characterizations of ${\rm CMO}_{L}(\mathbb R^n)$) to characterize the ${\rm CMO}_L$-Dirichlet problem in $\mathbb R_+^{n+1}$
under the critical index $q> n/2$.
For general $X$ and the Schr\"odinger operator $\LV=\L+V$, we investigate the ${\rm CMO}_{\LV}$-Dirichlet problem
for the Schr\"odinger equation $-\partial_{t}^2u+\LV u=0$ in $X\times \mathbb R_+$, which is more complicated due to  the underlying geometric and metric structures. Based on some new observations, we conquer this difficulty by using the Poincar\'{e} inequality and variants of the Calder\'{o}n reproducing formula.
This approach allows us to move beyond the common function space theory associated to operators, making it suitable for the setting of  metric measure spaces.
Specifically,
 denote
$$
    {\rm HCMO}_{\LV}(X\times \mathbb R_+)=\left\{u\in {\rm HMO}_{\LV}(X\times \mathbb R_+):\       \lim _{a \rightarrow 0}\sup _{B: r_{B} \leq a}  \mathcal{C}_{u,B}
     =  \lim _{a \rightarrow \infty}\sup _{B: r_{B} \geq a}  \mathcal{C}_{u,B}
     =   \lim _{a \rightarrow \infty}\sup _{B: B \subseteq \left(B(x_0, a)\right)^c}  \mathcal{C}_{u,B}=0\right\},
$$
where $\mathcal{C}_{u,B}$ is  defined in \eqref{eqn:HMO-LV}. We then establish the following result.

\begin{thmA}  \label{thm:CMO-Dirichlet}
Let all assumptions and notation be the same as in Theorem \ref{thm:Liouville}. 
Then
Then $u\in \mathrm{HCMO}_{\LV}(X\times \rr_+)$ if and only if $u=e^{-t\sqrt{\LV}}f$ for some $f\in \mathrm{CMO}_\LV(X)$.
Moreover, it holds that 
$$ \|u\|_{\HMO_{\LV}(X\times \rr_+)} \approx\|f\|_{\BMO_\LV(X)}.$$
\end{thmA}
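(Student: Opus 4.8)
The plan is to reduce the ${\rm CMO}_{\LV}$-Dirichlet problem to the ${\rm BMO}_{\LV}$-Dirichlet problem of Theorem~C, and then to identify, within the correspondence $u=e^{-t\sqrt{\LV}}f$, exactly which traces $f$ produce Carleson measures satisfying the three vanishing conditions defining ${\rm HCMO}_{\LV}(X\times\rr_+)$. Since Theorem~C already gives $u=e^{-t\sqrt{\LV}}f$ with $f\in\BMO_\LV(X)$ and $\|u\|_{\HMO_\LV}\approx\|f\|_{\BMO_\LV}$, the entire content of Theorem~D is the equivalence
$$
u\in{\rm HCMO}_{\LV}(X\times\rr_+)\quad\Longleftrightarrow\quad f\in{\rm CMO}_{\LV}(X),
$$
together with the (already inherited) norm comparison. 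So the work splits into the two implications of this equivalence.

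For the direction $f\in{\rm CMO}_{\LV}(X)\Rightarrow u\in{\rm HCMO}_{\LV}$, I would first record (or quote from the earlier sections) the appropriate characterization of ${\rm CMO}_{\LV}(X)$ as the closure of nice compactly supported functions, or equivalently as the subspace of $\BMO_\LV(X)$ whose localized mean oscillations vanish in the three regimes: small balls, large balls, and balls escaping to infinity. Then, using the Poisson semigroup bounds for $e^{-t\sqrt{\LV}}$ and the Green/Poisson kernel estimates on $X\times\rr_+$ available from the proof of Theorem~C, I would show that each of the three vanishing conditions on $f$ transfers to the corresponding vanishing condition on the Carleson functional $\mathcal C_{u,B}$. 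Concretely, one splits $f=f\mathbf 1_{2B}+f\mathbf 1_{(2B)^c}$ relative to a ball $B$; the local part contributes $\mathcal C_{u,B}\lesssim$ (localized oscillation of $f$ on a dilate of $B$), which is small by hypothesis, while the far part is controlled by kernel decay and the global $\BMO_\LV$ bound. The regime-by-regime bookkeeping (which of $r_B\to0$, $r_B\to\infty$, $B\to\infty$ one is in) dictates which estimate is the dominant one, but in each case the structure is the same localization argument used for $\BMO$ in Theorem~C, refined to capture decay.

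For the converse $u\in{\rm HCMO}_{\LV}\Rightarrow f\in{\rm CMO}_{\LV}(X)$, I would run the scheme of Song--Wu in the metric setting: use a variant of the Calder\'on reproducing formula adapted to $\LV$ (built from the resolvent or the heat/Poisson semigroup and the $L^2$-Poincar\'e inequality) to recover $f$, up to controlled error, from $t\partial_t u$ or $t\nabla u$ integrated against a suitable kernel; then estimate the localized mean oscillation of $f$ over a ball $B$ by $\mathcal C_{u,\widetilde B}$ over a comparable ball $\widetilde B$, plus tail terms. The three vanishing hypotheses on $u$ then force the three vanishing conditions on $f$. Here one must be careful that the reproducing formula converges in the right topology and that the potential $V$, being only in $A_\infty(X)\cap RH_q(X)$ with the critical index $q>\max\{Q/2,1\}$, still permits the needed off-diagonal/kernel bounds --- but these are precisely the estimates established in the proof of Theorem~C and in Sections~\ref{ho}--\ref{improved}, so they may be invoked.

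The main obstacle I expect is the converse direction in the genuinely metric case: unlike $X=\rn$, there is no convenient ambient extension and no translation structure, so the Calder\'on reproducing formula and the passage from the Carleson control of $t\nabla u$ on $X\times\rr_+$ back to the localized oscillation of the boundary trace must be carried out using only the doubling property, the $L^2$-Poincar\'e inequality, and the Poisson kernel for $\LV$. Managing the tail contributions uniformly across the three vanishing regimes --- in particular the "escaping balls" condition $B\subseteq(B(x_0,a))^c$, which has no analogue in the dilation-invariant Laplacian theory and genuinely uses the $\rho$-function localization built into $\BMO_\LV$ --- is where the argument is most delicate, and where the critical-index regularity from Theorem~\ref{thm:Liouville} is essential to keep the error terms controllable.
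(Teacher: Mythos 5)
Your reduction to Theorem~C is correct: Theorem~C already supplies the representation $u=e^{-t\sqrt{\LV}}f$ and the norm equivalence, so the whole content of Theorem~D is indeed the equivalence $u\in{\rm HCMO}_{\LV}\Leftrightarrow f\in{\rm CMO}_{\LV}$. For the direction $u\in{\rm HCMO}_{\LV}\Rightarrow f\in{\rm CMO}_{\LV}$ your plan --- a Calder\'on reproducing formula built from $t\partial_t e^{-t\sqrt{\LV}}$ together with the $L^2$-Poincar\'e inequality, dominating the localized oscillation/mean of $f$ on a ball $B$ by the Carleson functional $\mathcal C_{u,\cdot}$ over dilates --- is essentially what the paper does in Steps 2--5 of Theorem~\ref{hcmo}, yielding estimates of the form $\lf(\fint_B|f-f_B|^2\d\mu\r)^{1/2}\lesssim\sum_k k2^{-k}\mathcal C_{u,2^kB}$ and then passing to the four vanishing limits.

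The forward direction, however, contains a genuine gap. You propose splitting $f=f\mathbf 1_{2B}+f\mathbf 1_{(2B)^c}$ and assert that the local part contributes ``localized oscillation of $f$ on a dilate of $B$, which is small by hypothesis.'' That claim is false here because the Schr\"odinger Poisson semigroup is not conservative: $e^{-t\sqrt{\LV}}1$ is not constant, so $\nabla e^{-t\sqrt{\LV}}(f\mathbf 1_{2B})$ is controlled by the $L^2$ size of $f$ on $2B$, which includes its mean, not merely its oscillation. The paper's decomposition in Theorem~\ref{cmo} is instead $f=(f-f_B)+f_B$, and the constant term $f_B$ is the crux: $\nabla e^{-t\sqrt{\LV}}f_B = f_B\,\nabla e^{-t\sqrt{\LV}}1$, and by Lemma~\ref{lem-1}(v) its Carleson functional decays only like $\min\{(r_B/\rho(x_B))^{\dz},1\}$. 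To make this contribution vanish in all three regimes one needs the interpolation estimate
$$
|f_B|\le C\Bigl(\frac{\rho(x_B)}{r_B}\Bigr)^{\dz/4}\|f\|^{1-\varepsilon}_{\BMO_\LV}\,|f|^\varepsilon_{B(x_B,\rho(x_B))}
$$
(quoted from \cite{LI}); in particular, for escaping balls $B\subset B(x_0,a)^\complement$ with $r_B$ bounded, the decay must come from $|f|^\varepsilon_{B(x_B,\rho(x_B))}\to0$, which is exactly the CMO condition $\gz_4(f)=0$, not the $\BMO_\LV$ norm. Your sketch does not isolate the constant term, does not invoke the non-conservativeness estimate of Lemma~\ref{lem-1}(v), and does not provide any substitute for the above interpolation bound, so as written the forward implication does not close. (The appeal to a ``closure of nice compactly supported functions'' characterization of ${\rm CMO}_{\LV}$ is also not what the paper uses and would require its own justification in this metric setting.)
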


The $\rm BMO$ space is a natural substitution of $ L^\infty$ from the perspective of singular integrals,
thus the above characterizations on  Dirichlet problems with traces $f$ in ${\rm BMO}_{\LV}(X)$ and its subspace ${\rm CMO}_{\LV}(X)$
are both endpoint cases in this context.  For non-endpoint situations and as an application of Theorem \ref{thm:Liouville},
we turn to explore the real-variable characterization of the Dirichlet problem for Schr\"odinger equation in $X\times \mathbb R_+$ with weighted Morrey trace.
The Morrey space, as generalized Lebesgue spaces, introduced by Morrey in 1938 to study the regularity properties of solutions to PDEs, plays an important role at the interface between harmonic analysis, PDEs and potential theory;  see \cite{LLSY2024,SL2022,STY2018,YYZ2010-1} for example.
Here we introduce a variety of weighted harmonic Morrey functions (see Definition \ref{def2}),
which provide a more general solution space for the Schr\"odinger equation.
This work is inspired by the classical result  in Lebesgue traces by Stein and Weiss \cite{SW1971},
and contains the characterization  \cite{STY2018}  for the classical Morrey-Dirichlet problem under the Euclidean setting as a special case.

The article is organized as follows.
In Section \ref{pre}, we recall some relevant definitions, including the doubling property, the Poincar\'{e} inequality, the reverse H\"{o}lder class.
Some estimates for the heat/Poisson/Green kernel are given in Section \ref{estimate-kernel},
and Sections  \ref{ho} and \ref{improved} are devoted to proving Theorem \ref{thm:Liouville} with the critical reverse H\"{o}lder index  $\max\{Q/2,1\}$.
 As applications, we characterize all $\mathbb{L}$-harmonic functions with BMO/CMO/Morrey traces in Sections \ref{s4}--\ref{s6}, respectively.

Throughout this article, we  denote by $C$ a positive constant which is independent of the main parameters, but it may vary from line to line.

\section{Preliminaries}\label{pre}

\subsection{Underlying space}
\hskip\parindent
In this subsection, we first briefly describe our Dirichlet metric measure space settings;
see \cite{BD1959,fukushima1994dirichlet,St1994,St1996} for more details.
Suppose that $X$ is a separable, connected, locally compact and metrisable space.
Let $\mu$ be a Borel measure that is strictly positive on non-empty open sets and finite on compact sets.
We consider a strongly local, closed, and regular Dirichlet form $\mathscr{E}$ on $L^{2}(X, \mu)$
with dense domain $\mathscr{D} \subset L^{2}(X, \mu)$ (see \cite{fukushima1994dirichlet} for an accurate definition).
Suppose that $\mathscr{E}$ admits a ``\textsl{carr\'{e} du champ}'',
which means that for all $f, g \in \mathscr{D}$, $\Gamma(f, g)$ is absolutely continuous with respect to the measure $\mu$.
In what follows, for simplicity of notation,
let $\langle\nabla_{x} f, \nabla_{x} g\rangle$ denote the energy density ${\d \Gamma(f, g)}/{\d \mu}$
and $|\nabla_{x} f|$ denote the square root of ${\d \Gamma(f, f)}/{\d \mu}$.
Assume the space $(X, \mu, \mathscr{E})$ is endowed with the intrinsic (pseudo-)distance on $X$ related to $\mathscr{E}$, which is defined by setting
$$
d(x, y)=\sup \left\{f(x)-f(y): f \in \mathscr{D}_{\text {loc}} \cap C(X),\ \left|\nabla_{x} f\right| \leq 1 \text { a.e.}\right\},
$$
where $C(X)$ is the space of continuous functions on $X$.
Suppose $d$ is indeed a distance and induces a topology equivalent to the original topology on $X$.
As a summary of the above situation, we will say that $(X, d, \mu, \mathscr{E})$
is a complete Dirichlet metric measure space.

Let the domain $\mathscr{D}$ be equipped with the norm $[\|f\|_{2}^{2}+\mathscr{E}(f, f)]^{1/2}$.
We can easily see that it is a Hilbert space and denote it by $W^{1,2}(X)$.
Given an open set $U \subset X$, we define the Sobolev spaces $W^{1, p}(U)$ and $W_{0}^{1, p}(U)$ in the usual sense
(see~\cite{St1996}).
With respect to the Dirichlet form, there exists an operator $\mathcal{L}$ with
dense domain $\mathscr{D}(\mathcal{L})$ in $L^{2}(X, \mu), \mathscr{D}(\mathcal{L}) \subset W^{1,2}(X)$,
such that
$$
\int_{X} \mathcal{L} f(x)  g(x) \d \mu(x)=\mathscr{E}(f, g),
$$
for all $f \in \mathscr{D}(\mathcal{L})$ and each $g \in W^{1,2}(X)$.

\subsection{Doubling property and Poincar\'{e}'s inequality}
\hskip\parindent
We denote  by $B(x_B,r_B)$ the open ball centered at $x_B$ of radius $r_B$
and set $\lambda B(x_B,r_B)=B(x_B,\lz r_B)$ for each $\lz>0$.
We suppose that $\mu$ is doubling, i.e., there exists a constant $C_D >0$ such that for every ball $B\subset X$,
$$\mu(2B) \leq C_D \mu(B) < \infty.\eqno(D)$$
Note that the doubling property of $\mu$ implies
there exists a constant $Q>1$ such that for any $x \in X$ and $0<r<R<\infty$,
$$
\mu(B(x,R)) \leq C_D \left(\frac{R}{r}\right)^Q \mu(B(x,r)),
$$
and the reverse doubling property holds on a connected space (cf. \cite[Remark 8.1.15]{HKNT15}),
i.e., there exists a constant $0<n \leq Q$ such that for any $x \in X$ and $0<r<R<\infty$,
$$
\mu(B(x,r)) \leq C \left(\frac rR\right)^n \mu(B(x,R)).\eqno(RD)
$$
In general, we refer to $Q$ and $n$ as the upper dimension and lower dimension of the underlying space $X$, respectively.
For some technical reasons, the lower dimension $n$ is always assumed to be 
greater than 1 in the whole article.

Suppose that $(X, d, \mu, \mathscr{E})$
admits an $L^{2}$-Poincar\'{e}
inequality, namely, there exists a constant $C_{P}>0$ such that for each $f\in W^{1,2}(B)$ with $B\subset X$,
$$
\left(\fint_{B}\left|f-f_{B}\right|^{2} \d \mu\right)^{1 / 2}
	\leq C_{P} r_B\left(\fint_{B}\left|\nabla_{x} f\right|^{2} \d \mu\right)^{1 / 2}, \eqno(P_2)
$$
where $f_B$ denotes the mean (or average) of $f$ over $B$, i.e.,
$$
f_B=\fint_Bf\d\mu=\frac{1}{\mu(B)}\int_Bf\d\mu.
$$

Under the validity of a doubling property and an $L^2$-Poincar\'{e} inequality on $(X,d,\mu)$,
Sturm \cite{St1996} established two sides Gaussian bounds for the heat kernel associated to $\L$ (or say Li-Yau's estimates; see \cite{LY1986}),
i.e., there
exist $C_1,c_2,c_3>0$ such that for all $x,y\in X$ and $t>0$, it holds
$$
 \frac{1}{C_1\mu(B(x, \sqrt{t}))} \exp \left(-\frac{d(x, y)^{2}}{c_3 t}\right)\le h^{\mathcal{L}}_t(x, y)
  \le \frac{C_1}{\mu(B(x, \sqrt{t}))} \exp \left(-\frac{d(x, y)^{2}}{c_2 t}\right).\eqno(LY)
$$

\subsection{Reverse H\"{o}lder class}
\hskip\parindent
In the whole article,  we shall consider the Schr\"{o}dinger operator
$$
	\mathscr{L}=\mathcal{L}+V,
$$
where the non-trivial potential $V$ is a Muckenhoupt weight and admits certain reverse H\"{o}lder inequality.
Here the Muckenhoupt class  and the reverse H\"{o}lder class are defined as follows (see for example, \cite{Mu1972,ST1989}).
\begin{definition}\label{RH}
\begin{enumerate}
 \item[\rm{(i)}]A non-negative function $V$ on $X$ is said to be in the {{Muckenhoupt class}} $A_\infty(X)$,
if there exists a constant $C>0$ such that for any ball $B\subset X$,
$$\lf(\fint_{B}V\d \mu\r)\exp\lf(\fint_{B}\log V^{-1}\d \mu\r)\le C.$$
 \item[\rm{(ii)}]A non-negative function $V$ on $X$  is said to be in the {reverse H\"{o}lder class $RH_q(X)$}
with $1< q\le\fz$,
if there exists a constant $C>0$ such that for any ball $B\subset X$,
$$
\lf(\fint_{B}V^q\d \mu\r)^{1/q}\le C\fint_{B}V\d \mu,
$$
with the usual modification when $q=\fz$.

\end{enumerate}
\end{definition}

In the Euclidean setting $X=\rr^n$, it is well known that $RH_q(\rn)\subset A_\infty(\rn)$,
and every $A_\infty$-weight is in the reverse H\"older class $RH_q(\rn)$ for some $q>1$.
This is however not known in general metric measure space. 
Nevertheless, for any $V\in A_\infty(X)\cap RH_q(X)$, the induced measure $V\d\mu$ is also doubling (see \cite[Chapter 1]{ST1989}).
One remarkable feature about the reverse H\"older class is the self-improvement property, i.e.,
$V\in RH_q(X)$ implies $V\in RH_{q+\varepsilon}(X)$ for some small $\varepsilon>0$; see \cite{ST1989} for example.
This in particular implies $V\in L^{q+\varepsilon}_{\mathrm{loc}}(X)$. 
However, in general, the potential $V$ can be unbounded and does not belong to $L^p(X)$ for any $1\le p\le\fz$. As a model example, we could take $V(x)=|x|^2$ on $\rn$ with $n\ge3$.

Let us recall {the definition of} the critical function $\rho$ associated with the potential $V$ (see \cite[Definition 1.3]{Sh1995}).
For all $x \in X$, let
$$\rho(x)=\sup \left\{r>0: \frac{r^2}{\mu(B(x, r))}\int_{B(x, r)} V \d \mu \leq 1\right\}.$$
If $0\le V \in  A_\infty(X)\cap RH_{q}(X) $ for some $q> \max\{ Q/2,1\}$, then
$0 < \rho(x) < \infty$ for every $x \in X$, and
$$\frac{\rho(x)^2}{\mu(B(x,\rho(x)))}\int_{B(x,\rho(x))}V\d\mu\approx1.$$
Additionally, by \cite[Lemma 2.1 \& Proposition 2.1]{YZ2011},
the critical function $\rho$ satisfies the following property:
there exists a  constant $k_{0}>0$ such that for all $x,y\in X$,
\begin{align} \label{critical}
	C^{-1} \rho(x)\left(1+\frac{d(x, y)}{\rho(x)}\right)^{-k_{0}} \leq \rho(y) \leq C \rho(x)\left(1+\frac{d(x, y)}{\rho(x)}\right)^{\frac{k_0}{k_0+1}}.
\end{align}

\section{Estimates for kernels}\label{estimate-kernel}
\hskip\parindent
In this section, we provide some estimates for the heat/Poisson kernel on $X$,
and the heat/Green kernel on $X\times\rr$.
For a non-negative self-adjoint operator $T$, its heat/Green kernel is denoted by $h_t^T(\cdot,\cdot)$/$G_T(\cdot,\cdot)$.
\subsection{Estimates for the heat/Poisson kernel on $X$}
\begin{lemma}\label{lem-1}
Let $(X,d,\mu,\mathscr{E})$ be a complete Dirichlet metric measure space satisfying a doubling property $(D)$ and admitting an $L^2$-Poincar\'{e} inequality $(P_2)$.
Suppose that $0\le V\in A_\infty(X)\cap RH_q(X)$ for some $q>\max\{ Q/2,1\}$.
The following statements are valid.
\begin{itemize}
 \item[{(i)}]
 For any $k\in\{0\}\cup\nn$ and all $N>0$, the heat kernel $h^\LV_t(x,y)$ admits the Gaussian upper bound
 $$
 \left|t^k\partial_t^kh_{t}^\LV(x, y)\right|\le  \frac{C}{\mu(B(x,\sqrt{t}))} \exp \left(-\frac{d(x, y)^{2}}{c t}\right)\left(1+\frac{\sqrt{t}}{\rho(x)}\right)^{-N},\eqno(GUB)
$$
and hence the Poisson kernel $p^\LV_t(x,y)$ admits the Poisson upper bound
$$
 \left|t^k\partial_t^kp_{t}^\LV(x, y)\right|\le C \frac{t}{t+d(x, y)} \frac{1}{\mu(B(x,t+d(x, y)))}
\left(1+\frac{t+d(x,y)}{\rho(x)}\right)^{-N}.\eqno(PUB)
$$

 \item[{(ii)}]
 There exists a constant $0<\az<1$ such that, if $d(x,y)<\sqrt{t}$, then
 $$
 \left|h^\LV_t(x,z)-h^\LV_t(y,z)\right|\le C\lf(\frac{d(x,y)}{\sqrt{t}}\r)^\az  \frac{1}{\mu(B(x,\sqrt{t}))}\exp \left(-\frac{d(x,z)^{2}}{ct}\right).
 $$

 \item[{(iii)}]  The perturbation kernel $q_t(x,y)=p^\L_t(x,y)-p^\LV_t(x,y)$ satisfies
 $$0\le q_t(x,y)=p^\L_t(x,y)-p^\LV_t(x,y)\le  C\lf(\frac{{t}}{\rho(x)}\r)^{\min\{1/2,2-Q/q\}}\frac{t}{t+d(x, y)} \frac{1}{\mu(B(x,t+d(x, y)))}.$$


\item[{(iv)}] For any $x\in B$, the derivative of the Poisson kernel satisfies
$$
\int_{5B\setminus 4B} \left|r_B\nabla_yp^\LV_{t}(x,y)\right|^2\d\mu(y)
\le C\int_{6B\setminus 3B}\left(p^\LV_{t}(x,y) \left|r_B^2\partial_t^2 p^\LV_{t}(x,y)\right| +p^\LV_{t}(x,y)^2\right){\d\mu(y)}.
$$

  \item[{(v)}]
For any $N>0$, the derivative of the Poisson semigroup satisfies
$$
\begin{cases}
\dis \left|t\partial_t e^{-t\sqrt{\LV}}1(x)\right| \le C\lf(\frac{t}{\rho(x)}\r)^{\dz}\lf(1+\frac{t}{\rho(x)}\r)^{-N},\\[8pt]
\dis \lf(\int_0^{r_B}\fint_{B} \left|t\nabla_xe^{-t\sqrt{\LV}}1\right|^2\d \mu\frac{\d t}{t}\r)^{1/2}\le C\min\lf\{\lf(\frac{r_B}{\rho(x_B)}\r)^{\dz},1\r\},
\end{cases}
$$
where $0<\dz<\min\{1/2,2-Q/q\}$.
\end{itemize}
\end{lemma}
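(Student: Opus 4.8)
\textbf{Proof proposal for Lemma \ref{lem-1}.}
The five parts are largely independent, so the plan is to treat them in order, harvesting the standard Schrödinger-semigroup machinery adapted to the metric measure setting. For part (i), I would start from the Feynman--Kac / Duhamel relation $h_t^{\LV}(x,y)\le h_t^{\L}(x,y)$ together with the kernel perturbation identity
$$
h_t^{\L}(x,y)-h_t^{\LV}(x,y)=\int_0^t\!\!\int_X h_{t-s}^{\L}(x,z)\,V(z)\,h_s^{\LV}(z,y)\,\d\mu(z)\,\d s,
$$
and then extract the extra decay factor $(1+\sqrt t/\rho(x))^{-N}$ by iterating the one-step gain against the critical-ball normalization $\rho(x)^2\mu(B(x,\rho(x)))^{-1}\int_{B(x,\rho(x))}V\,\d\mu\approx1$; this is exactly where $RH_q$ with $q>\max\{Q/2,1\}$ and the scaling property \eqref{critical} of $\rho$ enter. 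The time-derivative bounds follow from analyticity of the semigroup on $L^2$ and a Cauchy-integral (or commutator) argument in the complex time variable, which turns the $k$-th derivative into an $L^\infty$ bound with the same Gaussian profile. The Poisson kernel estimate $(PUB)$ is then obtained from $(GUB)$ via the subordination formula $p_t^{\LV}=\frac{t}{2\sqrt\pi}\int_0^\infty e^{-t^2/(4u)}u^{-3/2}h_u^{\LV}\,\d u$, splitting the $u$-integral at $u\sim(t+d(x,y))^2$ and using doubling to pass between $\mu(B(x,\sqrt u))$ and $\mu(B(x,t+d(x,y)))$.

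Part (ii) is the Hölder (Li--Yau type) regularity in the space variable: I would obtain it from the classical De Giorgi--Nash--Moser theory valid under $(D)$ and $(P_2)$, applied to the parabolic equation $\partial_t w+\mathcal L w+Vw=0$ with the potential treated as a lower-order perturbation that is harmless on the scale $d(x,y)<\sqrt t$ because $RH_q$ gives a Kato-type smallness of $\int V$ on small balls; alternatively one interpolates the uniform $L^\infty$ bound from (i) with the $L^2$ energy estimate. Part (iii), the perturbation-kernel estimate, is the quantitative refinement of the identity above: after bounding the two Poisson kernels by $(PUB)$ one is left to estimate $\int_0^\infty\!\!\int_X(\cdots)V(z)(\cdots)\d\mu\,\d u$, and the gain $(t/\rho(x))^{\min\{1/2,\,2-Q/q\}}$ comes from splitting the $z$-integral according to $d(x,z)\lessgtr \rho(x)$, using $V\in RH_q$ to control $\int_{B}V$ by $\mu(B)/r^2$ up to the critical scale (this produces the exponent $2-Q/q$ via Hölder's inequality in the reverse-Hölder exponent) and the raw Poisson decay beyond it (this produces the $1/2$). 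This is the most delicate bookkeeping of the lemma, since one must track the doubling constants and the polynomial growth in $d(x,z)/\rho(x)$ uniformly; I expect this to be the main obstacle.

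Part (iv) is a Caccioppoli-type inequality for the $\LV$-harmonic function $(x,t)\mapsto p_t^{\LV}(x,y)$ on the annulus: choosing a cutoff $\eta$ supported in $6B\setminus3B$ and equal to $1$ on $5B\setminus4B$, testing the weak formulation of $-\partial_t^2 p+\mathcal Lp+Vp=0$ against $\eta^2 p$, and integrating by parts in $x$ yields $\int\eta^2|\nabla_x p|^2\lesssim\int|\nabla\eta|^2p^2+\int\eta^2 p\,|\partial_t^2 p|$ after dropping the nonnegative term $\int\eta^2 Vp^2$; rescaling $|\nabla\eta|\sim r_B^{-1}$ gives the stated form. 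Part (v) combines these: the pointwise bound $|t\partial_t e^{-t\sqrt{\LV}}1(x)|=|{-}t\int_X \partial_t p_t^{\LV}(x,y)\,\d\mu(y)|$ is estimated by writing $e^{-t\sqrt{\LV}}1=1-\int_0^t\partial_s e^{-s\sqrt{\LV}}1$ is not available (the semigroup does not fix $1$), so instead I would use $\partial_t e^{-t\sqrt{\LV}}1=-\sqrt{\LV}\,e^{-t\sqrt{\LV}}1$ together with $\int q_t(x,y)\,\d\mu(y)$ from part (iii) and the conservation $\int p_t^{\L}(x,y)\,\d\mu(y)=1$, which furnishes $e^{-t\sqrt{\LV}}1(x)=1-\int_X q_t(x,y)\,\d\mu(y)$ and hence $|t\partial_t e^{-t\sqrt{\LV}}1(x)|=|t\partial_t\int_X q_t(x,y)\,\d\mu(y)|\lesssim(t/\rho(x))^{\delta}(1+t/\rho(x))^{-N}$ after differentiating the estimate of (iii) in $t$ (using (i) for the $t$-derivative of $q_t$ and Cauchy's integral formula to legitimize it). The square-function bound in (v) then follows by combining the pointwise estimate with part (iv) applied to $u=e^{-t\sqrt{\LV}}1$, integrating the Carleson-type quantity $\int_0^{r_B}\fint_B|t\nabla_x u|^2\,\d\mu\,\d t/t$ and splitting at $t\sim\rho(x_B)$, which yields the $\min\{(r_B/\rho(x_B))^\delta,1\}$ behavior.
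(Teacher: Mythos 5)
Your overall plan is sound and tracks the route the paper (via its citations to \cite{JL2022} for (i),(ii),(v), to \cite{BDL2018} for the heat-kernel perturbation behind (iii), and a direct Caccioppoli argument for (iv)) actually follows. Parts (i), (ii), (iv) are essentially what the paper does, with (iv) being an exact match. For (iii) you propose working at the Poisson-kernel level directly, whereas the paper imports the heat-kernel perturbation bound
$0\le h_s^{\L}-h_s^{\LV}\lesssim\bigl(\sqrt s/(\sqrt s+\rho(x))\bigr)^{2-Q/q}\mu(B(x,\sqrt s))^{-1}e^{-d^2/cs}$
from \cite{BDL2018} and then pushes it through Bochner subordination; that shortcut absorbs the ``delicate bookkeeping'' you (rightly) flag, and it is worth knowing it exists, since it also hands you the time-derivative bounds on $q_t$ for free.

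Two points in your treatment of (v) need tightening. First, you cannot simply ``differentiate the estimate of (iii) in $t$''; a pointwise bound on $q_t$ does not control $\partial_t q_t$. Your Cauchy-integral fallback is legitimate only if you first establish the sectorial (complex-time) analogues of the heat-kernel perturbation bound, which is extra work; the cleaner route is the one the paper uses implicitly: differentiate \emph{inside} the subordination formula, so that $\partial_t q_t$ is expressed as an integral of $(h_s^{\L}-h_s^{\LV})$ against an explicit kernel, and the bound for $q_t$ and for $t\partial_t q_t$ come out simultaneously with the same decay. Second, part (iv) as stated concerns $\int_{5B\setminus4B}|r_B\nabla_y p_t^{\LV}(x,y)|^2\,d\mu(y)$ for the kernel and is used in the Liouville argument (Theorem~\ref{Liouville1}), not for the Carleson-type quantity in (v). For the square-function estimate you must run a Caccioppoli inequality directly for the $\mathbb{L}$-harmonic function $u(x,t)=e^{-t\sqrt{\LV}}1(x)$ on a parabolic box, and — crucially — apply it to $w=1-u$, which solves the \emph{inhomogeneous} equation $-\partial_t^2w+\L w+Vw=V$; only then does the perturbation bound $w=\int_X q_t\,d\mu\lesssim(t/\rho)^{\delta}$ enter to produce the $(r_B/\rho(x_B))^{\delta}$ decay for small balls, with the trivial $|u|\le1$ furnishing the ``$1$'' in the minimum for large balls. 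With these two corrections the proposal is complete and matches the intended argument.
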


\begin{proof}
Properties (i) and (ii) follows from \cite[Section 3]{JL2022}.

\smallskip

(iii) By the perturbation estimate of the heat kernel
$$0\le h^\L_s(x,y)-h^\LV_s(x,y)\le  C\lf(\frac{\sqrt{s}}{\sqrt{s}+\rho(x)}\r)^{2-Q/q}\frac{1}{\mu(B(x,\sqrt{s}))} \exp \left(-\frac{d(x, y)^{2}}{c s}\right)$$
from \cite[Proposition 7.13]{BDL2018} (see also \cite[Corollary 5.3]{LI}), and the Bochner subordination formula, we arrive at
\begin{align*}
0\le q_t(x,y)
&=\frac{1}{\sqrt{\pi}}\int_0^\fz\lf(\frac{t^2}{4s}\r)^{1/2}\exp\lf(-\frac{t^2}{4s}\r) \left(h^\L_s(x,y)-h^\LV_s(x,y)\right)\frac{\d s}{s} \\
& \le C\lf(\frac{{t}}{\rho(x)}\r)^{\min\{1/2,2-Q/q\}}\frac{t}{t+d(x, y)} \frac{1}{\mu(B(x,t+d(x, y)))},
\end{align*}
where the last inequality is due to a classical argument from \cite[pages 1145--1146]{JL2022}.

\smallskip

(iv) A classical Caccioppoli argument tells us that
\begin{align*}
&\int_{6B\setminus 3B} \left|\vz(y)\nabla_yp^\LV_{t}(x,y)\right|^2\d\mu(y)  \\
&\ =\int_{6B\setminus 3B} \left\langle\nabla_y(\vz(y)^2p^\LV_{t}(x,y)),\nabla_yp^\LV_{t}(x,y)\right\rangle\d\mu(y)
 -\int_{6B\setminus 3B} 2 \left\langle p^\LV_{t}(x,y)\nabla_y\vz(y),\vz(y)\nabla_yp^\LV_{t}(x,y)    \right\rangle \d\mu(y) \\
&\ \le  \int_{6B\setminus 3B} \vz(y)^2p^\LV_{t}(x,y) \left|\partial_t^2 p^\LV_{t}(x,y)\right|\d\mu(y)
 +2\int_{6B\setminus 3B}  p^\LV_{t}(x,y) \left|\nabla_y\vz(y)\right|^2 \d\mu(y) +\frac12  \int_{6B\setminus 3B} \left|\vz(y)\nabla_yp^\LV_{t}(x,y)\right|^2\d\mu(y).
\end{align*}
If we pick a Lipschitz function $\vz$ supported in $6B\setminus3B$ such that $\vz=1$ on $5B\setminus4B$ and $|\nabla_x\vz|\le C/r_B$,
then the desired result follows readily.

\smallskip

(v) The argument of the last property may be found in \cite[Section 5]{JL2022}. 
This completes the proof.
\end{proof}
%
%
%
%
%
%
%
\subsection{Estimates for the heat/Green kernel on $X\times\rr$}
\hskip\parindent
To show the regularity of the $\mathbb{L}$-harmonic function on $X\times\rr$, let us introduce some notation on the product space.
We endow the product space $X\times\rr$ with the product metric
$$\sqrt{d(x_1,y_1)^2+|x_2-y_2|^2},$$
and the product measure $\d\mu(x_1,x_2)=\d\mu(x_1)\d x_2$.
Since $\d\mu(x_1)$ is doubling, the measure $\d\mu(x_1, x_2)$ is also doubling.
Let $h^\L_t(x_1,y_1)$ and $h^{\mathbb{L}-\LV}_t(x_2,y_2)$ denote the heat kernel on $X$ and $\rr$, respectively,
and it is well known that the heat kernel $h_t^{\mathbb{L}-V}((x_1,x_2),(y_1,y_2))$ on $X\times\rr$ is given by (see \cite{LZ2018} for example)
\begin{align}\label{pdct}
h_t^{\mathbb{L}-V}((x_1,x_2),(y_1,y_2))=h^{\L}_t(x_1,y_1)h^{\mathbb{L}-\LV}_t(x_2,y_2).
\end{align}
Notice that, if both $h^\L_t(x_1,y_1)$ and $h^{\mathbb{L}-\LV}_t(x_2,y_2)$ admits $(LY)$,
so dose the direct product heat kernel $h_t^{\mathbb{L}-V}((x_1,x_2),(y_1,y_2))$ by \eqref{pdct}.
The fundamental solution (or say Green function) of the elliptic operator $-\partial^2_t+\mathcal{L}$ (or $-\partial^2_t+\LV$) in $X\times \rr$
can be obtained from the heat kernel through the spectral theory
$$
\begin{cases}
\displaystyle G_{\mathbb{L}-V}((x_1,x_2),(y_1,y_2))=\int_0^\fz h_t^{\mathbb{L}-V}((x_1,x_2),(y_1,y_2))\d t;\\[8pt]
\displaystyle G_{\mathbb{L}}((x_1,x_2),(y_1,y_2))=\int_0^\fz h_t^{\mathbb{L}}((x_1,x_2),(y_1,y_2))\d t.
\end{cases}
$$

In what follows, without causing confusion, we still use the notation $d$, $B$, $\mu$, $h_t$ and $G$
to denote the metric, ball, measure, heat kernel and Green function on the produce space for simplicity, namely
$$\displaystyle
\begin{cases}
\displaystyle d((x_1,x_2),(y_1,y_2))=\sqrt{d(x_1,y_1)^2+|x_2-y_2|^2};\\[4pt]
\displaystyle B((x_1,x_2),R)=\big\{(y_1,y_2)\in X\times\rr: d((y_1,y_2),(x_1,x_2))<R\big\}; \\[4pt]
\displaystyle \mu(B((x_1,x_2),R))=\int_{B((x_1,x_2),R)}\d\mu(y_1, y_2);\\[4pt]
\displaystyle h_t^{\mathbb{L}-V}((x_1,x_2),(y_1,y_2))=h^{\L}_t(x_1,y_1)h^{\mathbb{L}-\LV}_t(x_2,y_2);\\[4pt]
\displaystyle G_{\mathbb{L}-V}((x_1,x_2),(y_1,y_2))=\int_0^\fz h_t^{\mathbb{L}-V}((x_1,x_2),(y_1,y_2))\d t;\\[4pt]
\displaystyle G_{\mathbb{L}}((x_1,x_2),(y_1,y_2))=\int_0^\fz h_t^{\mathbb{L}}((x_1,x_2),(y_1,y_2))\d t.
\end{cases}
$$
The reader should distinguish these notation in the context carefully.
\begin{lemma}\label{lem-2}
Let $(X,d,\mu,\mathscr{E})$ be a complete Dirichlet metric measure space satisfying a doubling property $(D)$ and admitting an $L^2$-Poincar\'{e} inequality $(P_2)$.
Suppose that $0\le V\in A_\infty(X)\cap RH_q(X)$ for some $q>\max\{ Q/2,1\}$.
The following statements are valid.
\begin{itemize}
 \item[{(i)}]
 For any $N>0$, the heat kernel $h^{\mathbb{L}}_t((x_1,x_2),(y_1,y_2))$ admits the Gaussian upper bound
 $$
 h^{\mathbb{L}}_t((x_1,x_2),(y_1,y_2))\le  \frac{C}{\mu(B((x_1,x_2),\sqrt{t}))} \exp \left(-\frac{d((x_1,x_2),(y_1,y_2))^{2}}{c t}\right)
  \left(1+\frac{\sqrt{t}}{\rho(x_1)}\right)^{-N},
$$
and hence the Green function $\Gamma_{\mathbb{L}}((x_1,x_2),(y_1,y_2))$ admits the upper bound
$$
 G_{\mathbb{L}}((x_1,x_2),(y_1,y_2))\le C\frac{d((x_1,x_2),(y_1,y_2))^2}{\mu(B((x_1,x_2),d((x_1,x_2),(y_1,y_2))))}
  \left(1+\frac{d((x_1,x_2),(y_1,y_2))}{\rho(x_1)}\right)^{-N}.
$$

 \item[{(ii)}]
 There exists a constant $0<\az<1$ such that, if $d((x_1,x_2),(y_1,y_2))<\sqrt{t}$, then
 \begin{align*}
 &\left|h^{\mathbb{L}}_t((x_1,x_2),(z_1,z_2))-h^{\mathbb{L}}_t((y_1,y_2),(z_1,z_2))\right| \\
 &\ \le C\lf(\frac{d((x_1,x_2),(y_1,y_2))}{\sqrt{t}}\r)^\az \frac{1}{\mu(B((z_1,z_2),\sqrt{t}))}
 \lf[\exp \left(-\frac{d((x_1,x_2),(z_1,z_2))^{2}}{ct}\right) + \exp \left(-\frac{d((y_1,y_2),(z_1,z_2))^{2}}{ct}\right)\r],
 \end{align*}
  and hence the Green function $G_{\mathbb{L}}((x_1,x_2),(y_1,y_2))$ admits
 $$
\big|G_{\mathbb{L}}((x_1,x_2),(z_1,z_2))-G_{\mathbb{L}}((y_1,y_2),(z_1,z_2))\big|
\le C \left(\frac{d((x_1,x_2),(y_1,y_2))}{d((x_1,x_2),(z_1,z_2))}\right)^\az \frac{d((x_1,x_2),(z_1,z_2))^2}{\mu(B((x_1,x_2),d((x_1,x_2),(z_1,z_2))))},
$$
whenever $d((x_1,x_2),(y_1,y_2))<d((x_1,x_2),(z_1,z_2))/2$.
%
%
%
%
%
\end{itemize}
\end{lemma}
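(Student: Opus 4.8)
The plan is to establish both parts of Lemma~\ref{lem-2} by lifting the corresponding estimates from $X$ (recorded in Lemma~\ref{lem-1}) to the product space $X\times\mathbb R$ via the product formula \eqref{pdct}, and then integrating in $t$ to pass from the heat kernel to the Green function. For part (i), the Gaussian upper bound on $h^{\mathbb L}_t$ is immediate: by \eqref{pdct} we have $h^{\mathbb L}_t((x_1,x_2),(y_1,y_2))=h^{\mathcal L}_t(x_1,y_1)h^{\mathbb L-\mathscr L}_t(x_2,y_2)$, where the first factor obeys $(GUB)$ (with $k=0$) from Lemma~\ref{lem-1}(i) and the second is the one-dimensional Gaussian $\tfrac{c}{\sqrt t}\exp(-|x_2-y_2|^2/ct)$; multiplying the two exponentials recombines into $\exp(-d((x_1,x_2),(y_1,y_2))^2/ct)$, and $\mu(B((x_1,x_2),\sqrt t))\approx\mu(B(x_1,\sqrt t))\cdot\sqrt t$ handles the volume factors, while the decay factor $(1+\sqrt t/\rho(x_1))^{-N}$ is untouched. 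To obtain the Green-function bound, I would split $G_{\mathbb L}=\int_0^\infty h^{\mathbb L}_t\,\mathrm dt$ at $t\approx d^2$ (writing $d=d((x_1,x_2),(y_1,y_2))$): on $t\le d^2$ the Gaussian exponential beats any polynomial, on $t\ge d^2$ one uses the decay in $\rho$ together with the doubling-volume growth $\mu(B(x,\sqrt t))\gtrsim (\sqrt t/d)^n\mu(B(x,d))$ coming from $(RD)$ to make the integral converge; the hypothesis $n>1$ (equivalently the product lower dimension $n+1>2$) is exactly what guarantees convergence of $\int h_t\,\mathrm dt$ near $t=\infty$ on $X\times\mathbb R$. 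This is the standard passage from two-sided heat bounds to Green-function bounds, and I expect it to go through routinely once the splitting point is chosen correctly.

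For part (ii), the H\"older estimate on $h^{\mathbb L}_t$ in the product variable, I would use the triangle-type decomposition
$h^{\mathbb L}_t((x_1,x_2),(z_1,z_2))-h^{\mathbb L}_t((y_1,y_2),(z_1,z_2))$
$=[h^{\mathcal L}_t(x_1,z_1)-h^{\mathcal L}_t(y_1,z_1)]h^{\mathbb L-\mathscr L}_t(x_2,z_2)+h^{\mathcal L}_t(y_1,z_1)[h^{\mathbb L-\mathscr L}_t(x_2,z_2)-h^{\mathbb L-\mathscr L}_t(y_2,z_2)]$.
For the first term I apply the H\"older continuity of $h^{\mathcal L}_t$ from Lemma~\ref{lem-1}(ii), valid since $d(x_1,y_1)\le d((x_1,x_2),(y_1,y_2))<\sqrt t$, gaining a factor $(d(x_1,y_1)/\sqrt t)^\alpha\le(d((x_1,x_2),(y_1,y_2))/\sqrt t)^\alpha$; for the second term I use the elementary H\"older estimate for the one-dimensional Gaussian, $|h^{\mathbb L-\mathscr L}_t(x_2,z_2)-h^{\mathbb L-\mathscr L}_t(y_2,z_2)|\lesssim(|x_2-y_2|/\sqrt t)^\alpha\,\tfrac{c}{\sqrt t}[\exp(-|x_2-z_2|^2/ct)+\exp(-|y_2-z_2|^2/ct)]$ (which follows from the mean value theorem plus the Gaussian bound on $\partial_t$ or on the spatial derivative). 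Recombining exponentials and volumes as in part (i), and noting $\mu(B((z_1,z_2),\sqrt t))\approx\mu(B((x_1,x_2),\sqrt t))$ when $d((x_1,x_2),(z_1,z_2))<\sqrt t$ — or more precisely carrying the two exponential terms centered at $(x_1,x_2)$ and $(y_1,y_2)$ — yields the stated heat-kernel H\"older bound.

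To transfer this to the Green function, set $d=d((x_1,x_2),(z_1,z_2))$ and assume $d((x_1,x_2),(y_1,y_2))<d/2$. I would again split $\int_0^\infty$ at $t\approx d^2$. For $t\le d^2$, the H\"older bound on $h^{\mathbb L}_t$ is not directly applicable (it requires $d((x_1,x_2),(y_1,y_2))<\sqrt t$, which fails for small $t$), so there I bound the difference crudely by the sum $G_{\mathbb L}((x_1,x_2),(z_1,z_2))+G_{\mathbb L}((y_1,y_2),(z_1,z_2))$ restricted to $t\le d^2$ using the size bound from part (i), and absorb the smallness using that on this range the Gaussian forces the contribution to be $\lesssim (d((x_1,x_2),(y_1,y_2))/d)^\alpha$ times the claimed main term — alternatively, and more cleanly, one uses that for $t\le d((x_1,x_2),(y_1,y_2))^2$ both heat kernels are exponentially tiny, while for $d((x_1,x_2),(y_1,y_2))^2\le t\le d^2$ the H\"older estimate does apply. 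For $t\ge d^2$ the H\"older estimate of part (ii) applies with room to spare and the $t$-integral converges by the same $(RD)$-volume-growth argument as before. Collecting the pieces gives the factor $(d((x_1,x_2),(y_1,y_2))/d)^\alpha$ and the main term $d^2/\mu(B((x_1,x_2),d))$. The main obstacle I anticipate is precisely this bookkeeping near $t=0$: making sure that on the regime $t\lesssim d((x_1,x_2),(y_1,y_2))^2$ — where the product H\"older bound is unavailable — the Gaussian decay genuinely supplies the required power $(d((x_1,x_2),(y_1,y_2))/d)^\alpha$, which amounts to the elementary but slightly delicate inequality $\int_0^{a^2}\frac1{\mu(B(x,\sqrt t))}e^{-d^2/ct}\,\mathrm dt\lesssim (a/d)^\alpha\,\frac{d^2}{\mu(B(x,d))}$ for $a<d/2$, proved by substituting $t=d^2 s$ and using the volume comparison; the rest is routine.
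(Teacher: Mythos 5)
Your proposal follows essentially the same route as the paper: tensorize the one-variable estimates of Lemma~\ref{lem-1} via the product heat-kernel factorization, then integrate in $t$, splitting the integral at $d((x_1,x_2),(y_1,y_2))^2$ and at $d((x_1,x_2),(z_1,z_2))^2$ to obtain the H\"older bound for the Green function, with $(RD)$ and $n>1$ securing convergence at $t=\infty$. The only slip is notational: the correct factorization is $h^{\mathbb L}_t((x_1,x_2),(y_1,y_2))=h^{\LV}_t(x_1,y_1)\,h^{-\partial_s^2}_t(x_2,y_2)$ (Schr\"odinger semigroup on $X$ times heat semigroup on $\rr$), not $h^{\mathcal L}_t$, and \eqref{pdct} as stated concerns $\mathbb L-V$ rather than $\mathbb L$; you are in fact using the $h^{\LV}_t$ factor, since that is where the $(1+\sqrt t/\rho(x_1))^{-N}$ decay you invoke originates.
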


\begin{proof}
(i) Note that
 $$
 h^{\mathbb{L}}_t((x_1,x_2),(y_1,y_2))=h^{\LV}_t(x_1,y_1)h^{\mathbb{L}-\LV}_t(x_2,y_2),
$$
and hence by the doubling property
\begin{align*}
h^{\mathbb{L}}_t((x_1,x_2),(y_1,y_2))
&\le  \frac{C}{\mu(B(x_1,\sqrt{t}))} \exp \left(-\frac{d(x_1,y_1)^{2}}{c t}\right)
  \left(1+\frac{\sqrt{t}}{\rho(x_1)}\right)^{-N}\frac{1}{\sqrt{4\pi t}}\exp \left(-\frac{d(x_2,y_2)^{2}}{4t}\right) \\
&\le \frac{C}{\mu(B((x_1,x_2),\sqrt{t}))} \exp \left(-\frac{d((x_1,x_2),(y_1,y_2))^{2}}{c t}\right)   \left(1+\frac{\sqrt{t}}{\rho(x_1)}\right)^{-N}.
\end{align*}
The upper bound of the Green function $G_{\mathbb{L}}((x_1,x_2),(y_1,y_2))$ follows from
$$G_{\mathbb{L}}((x_1,x_2),(y_1,y_2))=\int_0^\fz h_t^{\mathbb{L}}((x_1,x_2),(y_1,y_2))\d t$$
and the reverse doubling property with $n>1$.

\smallskip

(ii) From Lemma \ref{lem-1}, we deduce that for any $d((x_1,x_2),(y_1,y_2))<\sqrt{t}$,
\begin{align*}
&\left|h^{\mathbb{L}}_t((x_1,x_2),(z_1,z_2))-h^{\mathbb{L}}_t((y_1,y_2),(z_1,z_2))\right| \\
&\ \le \left  |h^\LV_t(x_1,z_1)-h^\LV_t(y_1,z_1)\right|h^{\mathbb{L}-\LV}_t(x_2,z_2) +h^\LV_t(y_1,z_1) \left |h^{\mathbb{L}-\LV}_t(x_2,z_2)-h^{\mathbb{L}-\LV}_t(y_2,z_2)\right| \\
&\ \le C\lf(\frac{d(x_1,y_1)}{\sqrt{t}}\r)^\az  \frac{1}{\mu(B(z_1,\sqrt{t}))\sqrt{t}}\exp \left(-\frac{d(x_1,z_1)^{2}}{ct}\right)\exp \left(-\frac{|x_2-z_2|^{2}}{4t}\right)\\
&\ \ +C\lf(\frac{|x_2-y_2|}{\sqrt{t}}\r)^\az  \frac{1}{\mu(B(z_1,\sqrt{t}))\sqrt{t}}\exp \left(-\frac{d(y_1,z_1)^{2}}{ct}\right)\exp \left(-\frac{|y_2-z_2|^{2}}{ct}\right)\\
&\ \le C\lf(\frac{d((x_1,x_2),(y_1,y_2))}{\sqrt{t}}\r)^\az \frac{1}{\mu(B((z_1,z_2),\sqrt{t}))}
 \lf[\exp \left(-\frac{d((x_1,x_2),(z_1,z_2))^{2}}{ct}\right) + \exp \left(-\frac{d((y_1,y_2),(z_1,z_2))^{2}}{ct}\right)\r].
 \end{align*}

It remains to show the H\"{o}lder continuity of the Green function.
To this end, one writes
\begin{align*}
\big|G_\mathbb{L}((x_1,x_2),(z_1,z_2))-G_\mathbb{L}((y_1,y_2),(z_1,z_2))\big |
&\le \int_0^\fz  \left|h^\mathbb{L}_t((x_1,x_2),(z_1,z_2))-h^\mathbb{L}_t((y_1,y_2),(z_1,z_2))\right |\d t \\
&= \lf\{\int_0^{4d((x_1,x_2),(y_1,y_2))^2}+\int_{4d((x_1,x_2),(y_1,y_2))^2}^{d((x_1,x_2),(z_1,z_2))^2}+\int_{d((x_1,x_2),(z_1,z_2))^2}^\fz\r\}\cdots \d t\\
&=:I_1+I_2+I_3.
\end{align*}
For the first term $I_1$, we arrive at
\begin{align*}
I_1
&\le \int_0^{4d((x_1,x_2),(y_1,y_2))^2} \frac{C}{\mu(B((z_1,z_2),\sqrt{t}))} \exp \left(-\frac{d((x_1,x_2),(z_1,z_2))^{2}}{ct}\right)\d t \\
&\le  C\int_0^{4d((x_1,x_2),(y_1,y_2))^2} \left(\frac{t}{d((x_1,x_2),(z_1,z_2))}\right)^\az \frac{\d t}{\mu(B((z_1,z_2),d((x_1,x_2),(z_1,z_2))))}  \\
&\le  C \left(\frac{d((x_1,x_2),(y_1,y_2))}{d((x_1,x_2),(z_1,z_2))}\right)^\az \frac{d((x_1,x_2),(z_1,z_2))^2}{\mu(B((x_1,x_2),d((x_1,x_2),(z_1,z_2))))}.
\end{align*}
To estimate the second term $I_2$, it holds by the H\"{o}lder estimate on the heat kernel
\begin{align*}
I_2
&\le C\int_{4d((x_1,x_2),(y_1,y_2))^2}^{d((x_1,x_2),(z_1,z_2))^2} \lf(\frac{d((x_1,x_2),(y_1,y_2))}{\sqrt{t}}\r)^\az \frac{1}{\mu(B((z_1,z_2),\sqrt{t}))}
 \exp \left(-\frac{d((x_1,x_2),(z_1,z_2))^{2}}{ct}\right)\d t \\
&\le  C \left(\frac{d((x_1,x_2),(y_1,y_2))}{d((x_1,x_2),(z_1,z_2))}\right)^\az \frac{d((x_1,x_2),(z_1,z_2))^2}{\mu(B((x_1,x_2),d((x_1,x_2),(z_1,z_2))))}.
\end{align*}
For the last term $I_3$, we deduce from the H\"{o}lder estimate again and $(RD)$ with $n>1$ that
\begin{align*}
I_3
&\le C\int_{d((x_1,x_2),(z_1,z_2))^2}^\fz \lf(\frac{d((x_1,x_2),(y_1,y_2))}{\sqrt{t}}\r)^\az \frac{1}{\mu(B((z_1,z_2),\sqrt{t}))} \d t  \\
&\le C\int_{d((x_1,x_2),(z_1,z_2))^2}^\fz \lf(\frac{d((x_1,x_2),(y_1,y_2))}{\sqrt{t}}\r)^\az \lf(\frac{d((x_1,x_2),(z_1,z_2))}{\sqrt{t}}\r)^{n+1}
\frac{\d t}{\mu(B((x_1,x_2),d((x_1,x_2),(z_1,z_2))))}  \\
&\le  C \left(\frac{d((x_1,x_2),(y_1,y_2))}{d((x_1,x_2),(z_1,z_2))}\right)^\az \frac{d((x_1,x_2),(z_1,z_2))^2}{\mu(B((x_1,x_2),d((x_1,x_2),(z_1,z_2))))}.
\end{align*}

Collecting all estimates above lead to the desired result.
This completes the proof.
\end{proof}

\section{H\"{o}lder regularity under the critical reverse H\"{o}lder index}\label{ho}
\hskip\parindent
The H\"{o}lder regularity of the $\mathbb{L}$-harmonic function on $X\times \rr$  is established in this section.
\subsection{Statement of the main result}
\begin{theorem}\label{holder}
Let $(X,d,\mu,\mathscr{E})$ be a complete Dirichlet metric measure space satisfying a doubling property $(D)$\footnote{For some technical reasons, the lower dimension $n$ is always assumed to be equal or greater than 2 in this result. Moreover, since $Q\ge n\ge2$, we have $\max\{Q/2,1\}=Q/2$.}
and admitting an $L^2$-Poincar\'{e} inequality $(P_2)$.
Assume that $u$ is a weak solution to the elliptic equation
$$\mathbb{L}u=-\partial^2_tu+{\mathscr{L}}u=-\partial^2_tu+\mathcal{L}u+Vu=0$$
in  $2B_0\subset X\times \rr$ with $B_0=B((x_0,t_0),R)$, where  $0\le V\in A_\infty(X)\cap RH_q(X)$ for some $q>Q/2$.
Then there exists a constant $0<\az<1$ such that for any $(x,t),(y,s)\in B_0$,
$$|u(x,t)-u(y,s)|\le C{\left(\frac{d((x,t),(y,s))}{R}\right)^\az}\left(1+R^2\fint_{B(x_0,R)} V\d\mu\right) \sup_{2B_0}|u|.$$
\end{theorem}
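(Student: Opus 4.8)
The plan is to run a Moser/De Giorgi-type iteration directly on $X\times\mathbb R$ for the operator $\mathbb L=-\partial_t^2+\mathcal L+V$, exploiting that on the product space the relevant dimension is effectively $Q+1$, while the potential still only sees the $X$-variable. First I would record the basic properties we may use: the product measure $\mu\otimes dt$ is doubling with upper dimension $Q+1$ and lower dimension $n+1\ge 3$; the product Dirichlet form inherits an $L^2$-Poincar\'e inequality (since both factors do), so the product space again satisfies $(D)$ and $(P_2)$; and $V\in RH_q(X)$ for $q>Q/2$ gives, by the self-improvement property, a slightly better exponent, hence $V\in L^q_{\loc}$ with $q>(Q+1)/2$ \emph{is false in general} — this is exactly the delicate point, so instead I would only use $q>Q/2$ together with the $RH_q$ inequality to control the ``potential'' term via a Fefferman–Phong/Poincar\'e trick rather than a naive H\"older estimate.

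The core of the argument: let $B_0=B((x_0,t_0),R)$ and fix concentric balls $B_r\subset B_{2B_0}$. For a weak solution $u$, testing the equation against $\eta^2(u-k)_+$ for a cutoff $\eta$ yields the Caccioppoli inequality
\begin{align*}
\int_{B_r}\eta^2|\nabla(u-k)_+|^2\,d(\mu\otimes dt)
&\le C\int_{B_R}|\nabla\eta|^2(u-k)_+^2\,d(\mu\otimes dt)\\
&\quad+C\int_{B_R}\eta^2 V(x)(u-k)_+^2\,d(\mu\otimes dt),
\end{align*}
where $\nabla=(\nabla_x,\partial_t)$. The second term is the new feature. I would bound it using the reverse H\"older inequality for $V$ on the ball $B(x_0,R)\subset X$ to pass to an integrable power, then absorb it by the Poincar\'e–Sobolev inequality on $X\times\mathbb R$: one gets a gain because the Sobolev exponent improvement on a $(Q+1)$-dimensional space combined with $q>Q/2$ produces a factor that is a small power of the radius times $\big(1+R^2\fint_{B(x_0,R)}V\,d\mu\big)$. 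Concretely the combination $\|V\|_{L^q(B)}\cdot\|( u-k)_+\|_{L^{2q'}(B)}^2$ is handled by Sobolev embedding $W^{1,2}\hookrightarrow L^{2(Q+1)/(Q-1)}$ on the product ball and the requirement $2q'\le 2(Q+1)/(Q-1)$, i.e. $q\ge (Q+1)/2$... which again fails; so here the honest route is to keep $q>Q/2$ and exploit that the potential term only has an $x$-integral of $V$, using Fubini: freeze $t$, apply $RH_q(X)$ with $q>Q/2$ on $X$ (an $n$-or-$Q$-dimensional space, where $q>Q/2$ \emph{is} the correct Sobolev threshold), obtaining control by $R^2\fint_B V\,d\mu$ times $\int\eta^2|\nabla_x(u-k)_+|^2$ with a small constant, then integrate in $t$. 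This is precisely the mechanism that makes $Q/2$ rather than $(Q+1)/2$ the sharp index, and it is the step I expect to be the main obstacle — one must slice in $t$, apply the scalar $RH_q$/Poincar\'e estimate on $X$ uniformly in $t$, and recombine without losing the product structure.

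Once the ``good'' Caccioppoli inequality is in hand — with the potential term absorbed at the cost of the multiplicative factor $\Lambda_R:=1+R^2\fint_{B(x_0,R)}V\,d\mu$ — the rest is standard. I would run Moser iteration to obtain a local boundedness estimate $\sup_{B_{R/2}}|u|\le C\Lambda_R^{\theta}\big(\fint_{2B_0}|u|^2\big)^{1/2}$, and then either (a) a De Giorgi oscillation-decay lemma, using a measure-theoretic ``density'' argument and the isoperimetric/Poincar\'e inequality on $X\times\mathbb R$ to show $\operatorname{osc}_{B_r}u\le \gamma\,\operatorname{osc}_{B_{2r}}u$ for some $\gamma<1$ and all $r\le R$, or (b) the Moser approach via a weak Harnack inequality for $u-\inf u\ge0$ which is a supersolution up to the controlled lower-order term. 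Iterating the oscillation decay from scale $R$ down to scale $d((x,t),(y,s))$ gives the H\"older bound
$$
|u(x,t)-u(y,s)|\le C\Big(\frac{d((x,t),(y,s))}{R}\Big)^{\alpha}\Lambda_R\,\sup_{2B_0}|u|,
$$
with $\alpha=\alpha(C_D,C_P,Q,n,q)\in(0,1)$; the linear (rather than power) dependence on $\Lambda_R$ in the final statement comes from tracking the constant through finitely many iterations and using $\Lambda_R\ge 1$. The assumption $n\ge 2$ (so $n+1\ge 3$) is used to guarantee the Sobolev exponent $2(n+1)/(n-1)$ is finite and $>2$, i.e. that the product space is genuinely ``super-critical'' for the Poincar\'e–Sobolev inequality; the footnote's alternative argument for $1<n<2$ presumably replaces this by a different embedding and is deferred to Section~\ref{sec4.3}.
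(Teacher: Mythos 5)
Your proposal takes a genuinely different route from the paper. The paper does not run a De Giorgi/Moser iteration at all: it writes $u(x,t)-u(y,s)$ via a representation formula against the Green function $G_{\mathbb{L}-V}$ of the ``free'' product operator $-\partial_t^2+\mathcal L$ on $X\times\mathbb R$, splits into three integrals $I_1,I_2,I_3$, and estimates each. The boundary terms $I_1,I_2$ are handled by the H\"older and size estimates for $G_{\mathbb{L}-V}$ (Lemma \ref{lem-2} and the intrinsic H\"older estimate \cite[Cor.\ 3.3]{St1996}) plus Caccioppoli; the potential term $I_3$ is split into a near region and an annular region and controlled by the Kato inequality $(K)$ for $V\in RH_q(X)$, $q>Q/2$. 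It is exactly this last computation that produces the multiplicative factor $\Lambda_R=1+R^2\fint_{B(x_0,R)}V\,d\mu$ \emph{linearly} and with a structural, $V$-independent exponent $\alpha=\min\{1-Q/2q,\alpha_1/2\}$.

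The gap in your proposal is at exactly the point you flag as delicate and then wave past. First, Moser iteration yields $\sup_{B_{R/2}}|u|\le C\,\Lambda_R^{\theta}(\fint|u|^2)^{1/2}$ with $\theta>1$ in general, since the ``bad'' constant $\Lambda_R$ enters at every step of the geometric iteration; there is no reason the exponent telescopes back to $1$, and the phrase ``tracking the constant through finitely many iterations'' does not address this because Moser iteration is infinite and the accumulated power is determined by the geometry of the exponent ladder, not by $\Lambda_R\ge 1$. Second, and more fundamentally, the De Giorgi oscillation-decay lemma requires a contraction ratio $\gamma<1$ that must be \emph{independent} of $V$; but once $\Lambda_R$ appears as the Caccioppoli constant, $\gamma$ a priori degrades as $\Lambda_R$ grows, which would make $\alpha$ depend on $R$ and on $V$, contradicting the statement. (The sign of $V\ge0$ helps only for one-sided truncations $(u-k)_+$ with $k\ge0$; for $(k-u)_+$ the term $\int Vu\,\eta^2(k-u)_+$ has the wrong sign, so the iteration is not uniformly $V$-free.) The Green-function approach sidesteps this entirely: the H\"older exponent comes from the regularity of $G_{\mathbb{L}-V}$, which is a property of $-\partial_t^2+\mathcal L$ alone, and the potential only contributes the linear factor $\Lambda_R$ through the integral $\int G\cdot V\cdot u$. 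You would need to supply a Fefferman--Phong absorption argument that gives a Caccioppoli inequality whose constant is structural for $r\le\rho(x_B)$ and then a separate large-scale argument; as written, the iteration scheme does not deliver the stated conclusion.
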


\begin{remark}
In \cite{JL2022}, the authors proved that all $\mathscr{L}$-harmonic functions on $X$ are locally H\"{o}lder continuous
provided $0\le V\in A_\infty(X)\cap RH_q(X)$ for some $q>\max\{ Q/2,1\}$.
When applying this conclusion to the Schr\"{o}dinger equation
$$-\partial_t^2u+\mathcal{L}u+Vu=0$$
in $X\times \rr$,
we see that the natural extension $V(\cdot,t) = V(\cdot)$ forces the critical reverse H\"{o}lder order $q$ to be at least $(Q+1)/2$.
However, with the help of the Green function on $X\times \rr$, our Theorem \ref{holder} states that, even if $0\le V\in A_\infty(X)\cap RH_q(X)$ for some $q>Q/2$,
the solution to the Schr\"{o}dinger equation in $X\times \rr$ is still locally H\"older continuous.
\end{remark}

\subsection{Proof of Theorem \ref{holder}}

\begin{lemma}\label{K}
Let $(X,d,\mu,\mathscr{E})$ be a complete Dirichlet metric measure space satisfying a doubling property $(D)$.
Suppose that  $0\le V\in A_\infty(X)\cap RH_q(X)$ for some $q>Q/2$.
Then there exists a constant $C>0$ such that
$$
\int_{B(x,r)}\frac{d(y,x)^2}{\mu(B(x,d(y,x)))}V(y)\d\mu(y)\le C\frac{r^2}{\mu(B(x,r))}\int_{B(x,r)}V(y)\d\mu(y).
$$
\end{lemma}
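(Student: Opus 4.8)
The plan is to reduce the integral over $B(x,r)$ to a geometric series over dyadic annuli centred at $x$ and then to dominate that series by its leading term at scale $r$, using the reverse H\"older property of $V$ at the critical exponent. Set $\psi(s):=\frac{s^2}{\mu(B(x,s))}\int_{B(x,s)}V\d\mu$, so that the right-hand side of the lemma is $C\psi(r)$. First I would write $B(x,r)=\bigcup_{j\ge0}A_j$ with $A_j:=B(x,2^{-j}r)\setminus B(x,2^{-j-1}r)$; on $A_j$ one has $d(y,x)\approx 2^{-j}r$, and the doubling property $(D)$ gives $\mu(B(x,d(y,x)))\approx\mu(B(x,2^{-j}r))$, whence
$$\int_{B(x,r)}\frac{d(y,x)^2}{\mu(B(x,d(y,x)))}V(y)\d\mu(y)\le C\sum_{j\ge0}\frac{(2^{-j}r)^2}{\mu(B(x,2^{-j}r))}\int_{B(x,2^{-j}r)}V\d\mu=C\sum_{j\ge0}\psi(2^{-j}r).$$

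The heart of the argument is the scaling inequality $\psi(s)\le C(s/r)^{2-Q/q}\psi(r)$, valid for all $0<s\le r$. To prove it I would apply H\"older's inequality on $B(x,s)$, namely $\int_{B(x,s)}V\d\mu\le\mu(B(x,s))^{1-1/q}\big(\int_{B(x,s)}V^q\d\mu\big)^{1/q}\le\mu(B(x,s))^{1-1/q}\big(\int_{B(x,r)}V^q\d\mu\big)^{1/q}$, then invoke $V\in RH_q(X)$ on the ball $B(x,r)$ to bound $\big(\int_{B(x,r)}V^q\d\mu\big)^{1/q}\le C\mu(B(x,r))^{1/q-1}\int_{B(x,r)}V\d\mu$, and finally absorb the remaining ratio of measures using the upper-dimension estimate $\mu(B(x,r))\le C_D(r/s)^Q\mu(B(x,s))$. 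Collecting the powers produces exactly the exponent $2-Q/q$, which is strictly positive precisely because $q>Q/2$.

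Writing $\dz:=2-Q/q>0$, the scaling inequality gives $\psi(2^{-j}r)\le C2^{-j\dz}\psi(r)$, so summing the geometric series,
$$\sum_{j\ge0}\psi(2^{-j}r)\le C\psi(r)\sum_{j\ge0}2^{-j\dz}=C'\psi(r)=C'\frac{r^2}{\mu(B(x,r))}\int_{B(x,r)}V\d\mu,$$
which is the assertion. The only genuinely delicate step is the scaling inequality of the second paragraph: it is the single place where the critical hypothesis $q>Q/2$ enters, and one must chain H\"older's inequality, the reverse H\"older inequality on the larger ball, and the doubling bound in the correct order so that the measure factors cancel and the surviving power of $s/r$ comes out positive.
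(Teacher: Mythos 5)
Your proof is correct and follows exactly the route the paper sketches: you establish the Kato-type scaling inequality $\psi(s)\le C(s/r)^{2-Q/q}\psi(r)$ (which the paper records as condition $(K)$ and declares "easy to see") and then carry out the dyadic annuli summation that the paper leaves to the reader. Your derivation of $(K)$ via H\"older on the small ball, reverse H\"older on the large ball, and the doubling bound is the standard and correct chain, and the exponent $2-Q/q>0$ indeed comes out positive precisely because $q>Q/2$.
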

\begin{proof}
It is easy to see that the potential $V$ admits the Kato condition, namely, for any $0<r<R<\fz$, it holds that
$$
 \frac{r^2}{\mu(B(x,r))}\int_{B(x,r)}V\d\mu\le C\lf(\frac{r}{R}\r)^{2-Q/q}\frac{R^2}{\mu(B(x,R))}\int_{B(x,R)}V \d\mu. \eqno(K)
$$
With this Kato condition in hand, the rest proof follows from a classical annuli argument, and is left to the interested reader.
%
\end{proof}

\begin{proof}[Proof of Theorem \ref{holder}]
{\bf Step 1:} the identity for weak solutions.
Choose a Lipschitz function $\phi$ supported in $3B_0/2$ such that $\phi=1$ on $4B_0/3$ and $|\nabla_{x,t}\phi|\le C/R$.
Recall that $G_{\mathbb{L}-V}$ is the fundamental solution of the elliptic operator $-\partial^2_t+\mathcal{L}$ in $X\times \rr$.
Then we have
\begin{align*}
u\phi(x,t)
&=\int_{X\times\rr}  \left\langle\nabla_{z,r}G_{\mathbb{L}-V}((x,t),(z,r)),\nabla_{z,r}(u\phi)(z,r)\right\rangle{\d} \mu(z,r)\\
&=\int_{X\times\rr}  \left \langle\nabla_{z,r}G_{\mathbb{L}-V}((x,t),(z,r)),\nabla_{z,r} \phi(z,r)\right \rangle u(z,r){\d} \mu(z,r)  -\int_{X\times\rr}G_{\mathbb{L}-V}((x,t),(z,r)) \left\langle\nabla_{z,r}\phi(z,r),\nabla_{z,r} u(z,r)\right\rangle{\d} \mu(z,r)\\
&\quad
 -\int_{X\times\rr}\phi(z,r)G_{\mathbb{L}-V}((x,t),(z,r)) u(z,r)V(z){\d} \mu(z,r)
\end{align*}
and hence, for any $(x,t),(y,s)\in B_0$,
\begin{align*}
u(x,t)-u(y,s)
&=\int_{X\times\rr}\left\langle\nabla_{z,r}G_{\mathbb{L}-V}((x,t),(z,r))-\nabla_{z,r}G_{\mathbb{L}-V}((y,s),(z,r)),\nabla_{z,r} \phi(z,r)\right\rangle u(z,r){\d} \mu(z,r) \\
&\ -\int_{X\times\rr}  \left[G_{\mathbb{L}-V}((x,t),(z,r))-G_{\mathbb{L}-V}((y,s),(z,r))\right]  \left\langle\nabla_{z,r}\phi(z,r),\nabla_{z,r} u(z,r)\right\rangle{\d} \mu(z,r)  \\
&\ -\int_{X\times\rr}\phi(z,r)\left[G_{\mathbb{L}-V}((x,t),(z,r))-G_{\mathbb{L}-V}((y,s),(z,r))\right] u(z,r)V(z){\d} \mu(z,r)\\
&=:I_1+I_2+I_3.
\end{align*}

{\bf Step 2:} estimate the term $I_1$.
It follows from the H\"{o}lder inequality and the Caccioppoli inequality that
\begin{align*}
|I_1|
&\le \frac{C}{R}\lf(\int_{(3B_0/2)\setminus(4B_0/3)} \left|\nabla_{z,r}(G_{\mathbb{L}-V}((x,t),(z,r))-G_{\mathbb{L}-V}((y,s),(z,r)))\right|^2\d\mu(z,r)\r)^{1/2}
 \lf(\int_{3B_0/2}|u|^2{\d}\mu \r)^{1/2} \\
&\le \frac{C}{R^2}\lf(\int_{(2B_0)\setminus(5B_0/4)} \big|G_{\mathbb{L}-V}((x,t),(z,r))-G_{\mathbb{L}-V}((y,s),(z,r))\big|^2\d\mu(z,r)\r)^{1/2}
 \lf(\int_{3B_0/2}|u|^2{\d}\mu \r)^{1/2} \\
&\le \frac{C}{R^2}\lf(\int_{(2B_0)\setminus(5B_0/4)}\lf(\frac{d((x,t),(y,s))}{R}\r)^{2\az_1}\frac{d((z,r),(x_0,r_0))^4\d\mu(z,r)}{\mu(B((x_0,r_0),d((z,r),(x_0,r_0))))^2}\r)^{1/2}
 \lf(\int_{3B_0/2}|u|^2{\d}\mu \r)^{1/2} \\
&\le C\lf(\frac{d((x,t),(y,s))}{R}\r)^{\az_1}\sup_{3B_0/2}|u|,
\end{align*}
where in the third step we invoke the H\"{o}lder estimate on $(-\partial^2_t+\mathcal{L})$-harmonic function (see \cite[Corollary 3.3]{St1996}),
and the size condition for the fundamental solution of the elliptic operator $-\partial^2_t+\mathcal{L}$ in $X\times \rr$ (see \cite[Theorem 6.1]{CGL2021}),
i.e., there exists a constant $0<\az_1<1$ such that for any $(x,t),(y,s)\in B_0$,
\begin{align}\label{green-holder}
\big|G_{\mathbb{L}-V}((x,t),(z,r))-G_{\mathbb{L}-V}((y,s),(z,r))\big|
&\le C\lf(\frac{d((x,t),(y,s))}{R}\r)^{\az_1}\sup_{(\xi,\bz)\in 6B_0/5}G_{\mathbb{L}-V}((\xi,\bz),(z,r))  \nonumber\\
&\le C\lf(\frac{d((x,t),(y,s))}{R}\r)^{\az_1}\frac{d((z,r),(x_0,r_0))^2}{\mu(B((x_0,r_0),d((z,r),(x_0,r_0))))^2}.
\end{align}
%

{\bf Step 3:} estimate the term $I_2$.  There holds from \eqref{green-holder} and the Caccioppoli inequality again that
\begin{align*}
|I_2|
&\le\frac{C}{R}\int_{(3B_0/2)\setminus(4B_0/3)}\lf(\frac{d((x,t),(y,s))}{R}\r)^{\az_1}
 \frac{d((z,r),(x_0,r_0))^2}{\mu(B((x_0,r_0),d((z,r),(x_0,r_0))))}|\nabla_{z,r} u(z,r)|{\d}\mu(z,r) \\
&\le C\lf(\frac{d((x,t),(y,s))}{R}\r)^{\az_1}\lf(\fint_{3B_0/2}|R\nabla_{z,r}  u|^2{\d}\mu \r)^{1/2}
\le C\lf(\frac{d((x,t),(y,s))}{R}\r)^{\az_1}\sup_{2B_0}|u|.
\end{align*}

{\bf Step 4:} estimate the term $I_3$.
Finally, rewrite the term $I_3$ as
\begin{align*}
|I_3|
&\le \int_{d((z,r),(y,s))<5R/2} \big|G_{\mathbb{L}-V}((x,t),(z,r))-G_{\mathbb{L}-V}((y,s),(z,r))\big |V(z)\d\mu(z,r) \sup_{3B_0/2}|u|\\
&\le \lf\{\int_{d((z,r),(y,s))<Kd((x,t),(y,s))}+\int_{Kd((x,t),(y,s))\le d((z,r),(y,s))<5R/2 }\r\}\cdots \d\mu(z,r) \sup_{3B_0/2}|u|\\
&=:I_{31}+I_{32},
\end{align*}
where $K=8R^{1/2}d((x,t),(y,s))^{-1/2}>4\sqrt{2}$.

{\bf Substep 4.1:} estimate the term $I_{31}$.
For the local part, one may use the size condition for $G_{\mathbb{L}-V}$, and $(RD)$ to deduce
\begin{align*}
I_{31}
&\le \int_{d((z,r),(y,s))<Kd((x,t),(y,s))}  \big|G_{\mathbb{L}-V}((x,t),(z,r))-G_{\mathbb{L}-V}((y,s),(z,r))\big|V(z) \d\mu(z,r) \sup_{3B_0/2}|u|\\
&\le C\sup_{(\xi,\bz)\in B_0}\int_0^{2Kd((x,t),(y,s))}\int_{d(z,\xi)<2Kd((x,t),(y,s))}
 \frac{d(z,\xi)+r}{\mu(B(\xi,d(z,\xi)+r))} V(z) \d\mu(z,r)\sup_{3B_0/2}|u|\\
&\le C\sup_{\xi\in B(x_0,R)}\int_{d(z,\xi)<2Kd((x,t),(y,s))}
 \lf[\int_0^{2Kd((x,t),(y,s))}\lf(\frac{d(z,\xi)}{d(z,\xi)+r}\r)^{n-1}\d r\r]\frac{d(z,\xi)V(z) \d\mu(z)}{\mu(B(\xi,d(z,\xi)))} \sup_{3B_0/2}|u|.
\end{align*}
Since $n\ge2$, the inner integral can be calculated as
$$
\int_0^{2Kd((x,t),(y,s))}\lf(\frac{d(z,\xi)}{d(z,\xi)+r}\r)^{n-1}\d r
\le C(n)d(z,\xi)\log\lf(\frac{4Kd((x,t),(y,s))}{d(z,\xi)}\r).
$$
Therefore we deduce from the Kato condition $(K)$ that
\begin{align*}
I_{31}
&\le C\sup_{\xi\in B(x_0,R)}\int_{d(z,\xi)<2Kd((x,t),(y,s))}
 \log\lf(\frac{4Kd((x,t),(y,s))}{d(z,\xi)}\r)\frac{d(z,\xi)^2}{\mu(B(\xi,d(z,\xi)))}V(z) \d\mu(z) \sup_{3B_0/2}|u|\\
&\le  C\sup_{\xi\in B(x_0,R)}\sum_{j=-1}^\fz (j+3)\frac{[2^{-j}Kd((x,t),(y,s))]^2}{\mu(B(\xi,2^{-j}Kd((x,t),(y,s))))}\int_{d(z,\xi)<2^{-j}Kd((x,t),(y,s))}
  V(z) \d\mu(z) \sup_{3B_0/2}|u| \\
&\le  C\sup_{\xi\in B(x_0,R)}\sum_{j=-1}^\fz \frac{j+3}{2^{j(2-Q/q)}}\frac{[2Kd((x,t),(y,s))]^2}{\mu(B(\xi,2Kd((x,t),(y,s))))}\int_{d(z,\xi)<2Kd((x,t),(y,s))}
  V(z) \d\mu(z) \sup_{3B_0/2}|u| \\
&\le C\lf(\frac{2Kd((x,t),(y,s))}{32R}\r)^{2-Q/q}\sup_{\xi\in B(x_0,R)}\frac{(32R)^2}{\mu(B(\xi,32R))}\int_{d(z,\xi)<32R} V(z) \d\mu(z)\sup_{3B_0/2}|u|\\
&\le C\lf(\frac{d((x,t),(y,s))}{R}\r)^{1-Q/2q}\frac{R^2}{\mu(B(x_0,R))}\int_{B(x_0,R)} V \d\mu\sup_{3B_0/2}|u|.
\end{align*}

{\bf Substep 4.2:} estimate the term $I_{32}$.
For the annular part, it follows from \eqref{green-holder} 
and an argument similar to the term $I_{31}$ that
\begin{align*}
I_{32}
&\le \int_{Kd((x,t),(y,s))\le d((z,r),(y,s))<5R/2}\big|G_{\mathbb{L}-V}((x,t),(z,r))-G_{\mathbb{L}-V}((y,s),(z,r))\big|V(z) \d\mu(z,r) \sup_{3B_0/2}|u|  \\
&\le C\int_{Kd((x,t),(y,s))\le d((z,r),(y,s))<5R/2}\lf(\frac{{d((x,t),(y,s))}}{K{d((x,t),(y,s))}/4}\r)^{\az_1}
 \frac{d((z,r),(y,s))^2}{\mu(B((y,s),d((z,r),(y,s))))}V(z) \d\mu(z,r) \sup_{3B_0/2}|u| \\
&\le C\lf(\frac{d((x,t),(y,s))}{R}\r)^{\az_1/2}\frac{R^2}{\mu(B(x_0,R))}\int_{B(x_0,R)} V \d\mu\sup_{3B_0/2}|u|.
\end{align*}

{\bf Step 5:} completion of the proof.
Combining these estimates on $I_1$, $I_2$, $I_{31}$ and $I_{32}$, we arrive at
$$
|u(x,t)-u(y,s)|\le C\lf(\frac{d((x,t),(y,s))}{R}\r)^{\min\{1-Q/2q,\az_1/2\}}\lf(1+\frac{R^2}{\mu(B(x_0,R))}\int_{B(x_0,R)} V \d\mu\r)\sup_{2B_0}|u|,
$$
which completes the proof of Theorem \ref{holder} by letting $\az={\min\{1-Q/2q,\az_1/2\}}$.
\end{proof}

\subsection{Another H\"{o}lder continuity}\label{sec4.3}
\hskip\parindent
In the proof of Theorem \ref{holder}, the Green function $G_{\mathbb{L}-V}$ and the reverse H\"{o}lder property of $V$ play a key role.
In fact, we can employ the Green function related to $V$ to show the H\"{o}lder continuity of the $\mathbb{L}$-harmonic function as follows.

\begin{theorem}\label{holder1}
Let $(X,d,\mu,\mathscr{E})$ be a complete Dirichlet metric measure space satisfying a doubling property $(D)$ and admitting an $L^2$-Poincar\'{e} inequality $(P_2)$.
Assume that $u$ is a weak solution to the elliptic equation
$$\mathbb{L}u=-\partial^2_tu+{\mathscr{L}}u=-\partial^2_tu+\mathcal{L}u+Vu=0$$
in  $8B_0\subset X\times \rr$ with $B_0=B((x_0,t_0),R)$, where  $0\le V\in A_\infty(X)\cap RH_q(X)$ for some $q>\max\{ Q/2,1\}$.\footnote{Since the Kato condition $(K)$ is not used in Theorem \ref{holder1}, we can relax the critical reverse H\"older index from $Q/2$ to $\max\{Q/2,1\}$ here.}
Then there exists a constant $0<\az<1$ such that for any $(x,t),(y,s)\in B_0$,
$$|u(x,t)-u(y,s)|\le C{\left(\frac{d((x,t),(y,s))}{R}\right)^\az} \sup_{8B_0}|u|.$$
\end{theorem}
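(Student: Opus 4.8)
The plan is to rerun the Green-function representation used in the proof of Theorem~\ref{holder}, but with the fundamental solution $G_{\mathbb{L}-V}$ of $-\partial_t^2+\mathcal{L}$ replaced by the fundamental solution $G_{\mathbb{L}}$ of $\mathbb{L}=-\partial_t^2+\mathcal{L}+V$ itself. The payoff I expect is that the potential term gets \emph{absorbed} into the Green potential and never appears as a separate summand (the analogue of $I_3$ in Theorem~\ref{holder}); consequently the Kato condition $(K)$ is not needed, and the only facts about $V$ that enter are $V\ge0$, the finiteness of the critical function $\rho$, and the size/H\"older bounds for $G_{\mathbb{L}}$ on $X\times\rr$ from Lemma~\ref{lem-2}. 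All of these hold under $q>\max\{Q/2,1\}$, which accounts for the relaxed index in the statement.

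\textbf{Step 1 (representation).} First I would fix a Lipschitz cut-off $\phi$ with $\phi\equiv1$ on $3B_0$, $\supp\phi\subset4B_0$, and $|\nabla_{x,t}\phi|\le C/R$. Since $G_{\mathbb{L}}((x,t),\cdot)$ is the fundamental solution of $\mathbb{L}$ and $u\phi\in W^{1,2}_0$, one has $u(x,t)=u\phi(x,t)=\mathscr{E}_{\mathbb{L}}\big(G_{\mathbb{L}}((x,t),\cdot),\,u\phi\big)$ for $(x,t)\in B_0$. Expanding $\nabla(u\phi)=\phi\nabla u+u\nabla\phi$, regrouping the $\phi\nabla u$-part together with the potential part $\int VG_{\mathbb{L}}\phi u$ into $\mathscr{E}_{\mathbb{L}}\big(u,\,\phi\,G_{\mathbb{L}}((x,t),\cdot)\big)$, and using that this last quantity vanishes because $u$ is a weak solution of $\mathbb{L}u=0$ in $8B_0$, I expect to obtain
\begin{align*}
u(x,t)&=\int_{X\times\rr}u(z,r)\,\big\langle\nabla_{z,r}\phi,\nabla_{z,r}G_{\mathbb{L}}((x,t),(z,r))\big\rangle\,\d\mu(z,r)\\
&\quad-\int_{X\times\rr}G_{\mathbb{L}}((x,t),(z,r))\,\big\langle\nabla_{z,r}\phi,\nabla_{z,r}u(z,r)\big\rangle\,\d\mu(z,r),
\end{align*}
with no potential term surviving. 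Subtracting this identity at $(y,s)$ from the one at $(x,t)$ (both points in $B_0$) then gives $u(x,t)-u(y,s)=I_1+I_2$, whose integrands are built from $w:=G_{\mathbb{L}}((x,t),\cdot)-G_{\mathbb{L}}((y,s),\cdot)$ and from $\nabla\phi$.

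\textbf{Step 2 (the two estimates).} I may assume $d((x,t),(y,s))\le R/2$, since otherwise $|u(x,t)-u(y,s)|\le2\sup_{B_0}|u|\le C\big(d((x,t),(y,s))/R\big)^{\az}\sup_{8B_0}|u|$ already. On $\supp\nabla\phi\subset4B_0\setminus3B_0$, and on a fixed neighbourhood of it inside $8B_0$, both poles lie at distance $\approx R$ from $(z,r)$, so $w$ is $\mathbb{L}$-harmonic there and Lemma~\ref{lem-2} yields $|w(z,r)|\le C\big(d((x,t),(y,s))/R\big)^{\az}R^2/\mu(B_0)$ on that region. For $I_1$ I would use Cauchy--Schwarz, the Caccioppoli inequality for the $\mathbb{L}$-harmonic function $w$ (which, because $V\ge0$, has the same shape as in the potential-free case), and then the pointwise bound on $w$ together with $\big(\int_{4B_0\setminus3B_0}|u|^2\big)^{1/2}\le C\mu(B_0)^{1/2}\sup_{8B_0}|u|$. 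For $I_2$ I would insert the pointwise bound on $w$ directly, then Cauchy--Schwarz and the Caccioppoli inequality for $u$. Each step consumes only finitely many dilations of $B_0$, all inside $8B_0$, and should produce $|I_1|+|I_2|\le C\big(d((x,t),(y,s))/R\big)^{\az}\sup_{8B_0}|u|$, as claimed.

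\textbf{Expected main obstacle.} I do not anticipate a genuinely hard term: this is essentially the proof of Theorem~\ref{holder} with $I_3$ deleted, so the real work has been offloaded onto Lemma~\ref{lem-2}, i.e.\ onto deducing the H\"older estimate for the \emph{product}-space Green function $G_{\mathbb{L}}$ from the product structure of the heat kernel. The only point inside the present argument requiring care is the rigorous derivation of the Green representation across the pole of $G_{\mathbb{L}}((x,t),\cdot)$ at $(x,t)$; I would handle this exactly as in the proof of Theorem~\ref{holder}, by first multiplying $G_{\mathbb{L}}((x,t),\cdot)$ by a cut-off vanishing near the pole, then letting its scale tend to $0$ — the correction terms are supported near $(x,t)$, hence away from $\supp\nabla\phi$, and drop out in the limit.
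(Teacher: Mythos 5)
Your proposal is correct and follows essentially the same route as the paper: replace $G_{\mathbb{L}-V}$ by the Green function $G_{\mathbb{L}}$ of $\mathbb{L}$ itself, so that the potential term is absorbed into the bilinear form and the representation formula reduces to two boundary terms supported in the annulus $\supp\nabla\phi$, which are then handled by Cauchy--Schwarz, Caccioppoli, and the size/H\"older estimates for $G_{\mathbb{L}}$ from Lemma~\ref{lem-2}. The only cosmetic difference is the choice of cut-off scales (the paper uses $\phi\equiv1$ on $6B_0$, $\supp\phi\subset7B_0$, keeping everything inside $8B_0$), but your regrouping of $\int\langle\nabla G,\phi\nabla u\rangle+\int VGu\phi$ into $\mathscr{E}_{\mathbb{L}}(u,\phi G)$ is exactly the cancellation the paper exploits, and your identification of the removal of the $I_3$ term and hence of the Kato condition as the reason the index relaxes to $\max\{Q/2,1\}$ matches the paper's footnote.
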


\begin{proof}
Take a smooth function $\phi$ supported in $7B_0$ such that $\phi=1$ on $6B_0$ and $|\nabla\phi|\le C/R$.
It follows from the $\mathbb{L}$-harmonic of $u$ that
\begin{align*}
u\phi(x,t)
&=\int_{X\times\rr} \left\langle\nabla_{z,h}G_\mathbb{L}((x,t),(z,h)),\nabla_{z,h}(u\phi)(z,h)\right\rangle\d\mu(z,h)
 +\int_{X\times\rr}G_\mathbb{L}((x,t),(z,h))(u\phi)(z,h)V(y)\d\mu(z,h)\\
&=\int_{X\times\rr} \left\langle\nabla_{z,h}G_\mathbb{L}((x,t),(z,h)),u\nabla_{z,h}\phi(z,h)\right\rangle\d\mu(z,h)
 - \int_{X\times\rr}G_\mathbb{L}((x,t),(z,h))\left\langle\nabla_{z,h}\phi(z,h),\nabla_{z,h}u(z,h)\right\rangle \d\mu(z,h).
\end{align*}
Therefore, invoking the Caccioppoli inequality, and the H\"{o}lder estimate on $G_\mathbb{L}$ from Section \ref{estimate-kernel}, we deduce that for any $(x,t),(y,s)\in B_0$,
\begin{align*}
|u(x,t)-u(y,s)|
&\le \frac CR\lf(\int_{7B_0\setminus 6B_0} \left|\nabla_{z,h}G_\mathbb{L}((x,t),(z,h))-\nabla_{z,h}G_\mathbb{L}((y,s),(z,h))\right|^2\d\mu(z,h)\r)^{1/2}
 \lf(\int_{7B_0\setminus 6B_0}|u(z,h)|^2\d\mu(z,h)\r)^{1/2} \\
&\ + \frac CR\lf(\int_{7B_0\setminus 6B_0}\big|G_\mathbb{L}((x,t),(z,h))-G_\mathbb{L}((y,s),(z,h))\big|^2\d\mu(z,h)\r)^{1/2}
 \lf(\int_{7B_0\setminus 6B_0}|\nabla_{z,h}u(z,h)|^2\d\mu(z,h)\r)^{1/2} \\
&\le \frac{C}{R^2}\lf(\int_{8B_0\setminus 5B_0} \big|G_\mathbb{L}((x,t),(z,h))-G_\mathbb{L}((y,s),(z,h))\big|^2\d\mu(z,h)\r)^{1/2}  \lf(\int_{8B_0}|u|^2\d\mu\r)^{1/2} \\
&\le \frac{C}{R^2}\lf(\int_{8B_0\setminus 5B_0}\left(\frac{d((x,t),(y,s))}{d((x,t),(z,h))}\right)^{2\az} \frac{d((x,t),(z,h))^4}{\mu(B((x,t),d((x,t),(z,h))))^2}\d\mu(z,h)\r)^{1/2}
  \lf(\int_{8B_0}|u|^2\d\mu\r)^{1/2} \\
&\le C\left(\frac{d((x,t),(y,s))}{R}\right)^{\az} \sup_{8B_0}|u|,
\end{align*}
which completes the proof.
\end{proof}

\begin{remark}
On the one hand, different from Theorem \ref{holder}, we do not need the assumption on the lower dimension $n\ge2$ in Theorem \ref{holder1}.
The lower dimension $n\ge2$ in Theorem \ref{holder} is to control the the integral of $V$ over a ball with respect to the Green measure.
This integral does not appear in Theorem \ref{holder1}, and hence we can relax the lower dimension $n$ to be greater than 1.
On the other hand, comparing Theorem \ref{holder} with Theorem \ref{holder1},
note  that the critical term
$$\left(1+R^2\fint_{B(x_0,R)} V\d\mu\right)$$
occurs in Theorem \ref{holder} but not in Theorem \ref{holder1}.
In fact, this term can be absorbed by $\sup_{2B_0}|u|$
(because of an additional polynomial decay; see Lemma \ref{mean} below) at the expense of a lager ball; see \cite[Lemma 6.1]{Sh1999} for the Laplacian case.
\end{remark}

\section{Liouville theorem under the critical reverse H\"{o}lder index}\label{improved}
%
%
%
%
\subsection{Statement of the main result}
\hskip\parindent
%
%
The following Theorem \ref{Liouville} is the main result of this section, whose proof is given in Section \ref{sec3.3}.
\begin{theorem}\label{Liouville}
Let $(X,d,\mu,\mathscr{E})$ be a complete Dirichlet metric measure space satisfying a doubling property $(D)$ and admitting an $L^2$-Poincar\'{e} inequality $(P_2)$.
Assume that $u$ is a weak solution to the elliptic equation
$$\mathbb{L}u=-\partial_t^2+{\mathscr{L}} u=-\partial_t^2u+\L u+V u=0$$
in $X \times \rr$, where $0\le V\in A_\infty(X)\cap RH_{q}(X)$ for some $q>\max\{Q/2,1\}$.
If there exist some $(x_0,t_0)\in X\times \rr$ and $\bz>0$ such that
\begin{align}\label{global-control}
\int_{X\times\rr} \frac{|u(x,t)|^2}{(1+d((x,t),(x_0,t_0)))^{\bz}\mu(B((x_0,t_0),1+d((x,t),(x_0,t_0))))}\d\mu(x,t)\le C{(x_0,t_0,\bz)}<\infty,
\end{align}
then $u=0$ in $X\times \rr$.
\end{theorem}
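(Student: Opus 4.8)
\textbf{Proof plan for Theorem \ref{Liouville}.}
The plan is to exploit the H\"older regularity of $\mathbb L$-harmonic functions on $X\times\rr$ (Theorems \ref{holder}/\ref{holder1}) together with a mean value property carrying an exponential decay (the announced Lemma \ref{mean}), and then run a rescaling/iteration argument on balls $B((x_0,t_0),R)$ as $R\to\infty$ to force $u\equiv0$. First I would observe that, after translating, we may assume $(x_0,t_0)$ is the center appearing in \eqref{global-control}. The growth hypothesis \eqref{global-control} is an $L^2$-weighted integrability condition; by a dyadic annular decomposition of $X\times\rr$ around $(x_0,t_0)$, it gives that $\fint_{2B}|u|^2\,\d\mu \le C\,\mu(B)^{?}(1+R)^{\bz}$-type bounds, i.e.\ the $L^2$-average of $u$ over $B((x_0,t_0),R)$ grows at most like a fixed power of $R$. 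Combining this with the interior H\"older estimate
$$|u(x,t)-u(y,s)|\le C\Big(\tfrac{d((x,t),(y,s))}{R}\Big)^{\az}\Big(1+R^2\fint_{B(x_0,R)}V\,\d\mu\Big)\sup_{2B_0}|u|$$
and a standard $L^\infty$--$L^2$ bound for subsolutions (or the kernel estimates of Section \ref{estimate-kernel}), one upgrades this to polynomial growth of $\sup_{B((x_0,t_0),R)}|u|$.

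The decisive input is the exponential-decay mean value property for $\mathbb L$-harmonic functions: this is where the potential $V$ (via the critical function $\rho$ and the estimates $(GUB)$, $(PUB)$, and Lemma \ref{lem-2}) produces a genuine gain over the purely harmonic case. Concretely, I would use Lemma \ref{mean} to get, for a point at distance $R$ from where $\rho$ is comparable to $1$, an estimate of the shape
$$|u(x_0,t_0)|\le C\Big(1+\tfrac{R}{\rho(x_0)}\Big)^{-M}\ \Big(\text{average of }|u|\text{ over }B((x_0,t_0),R)\Big)$$
for $M$ as large as we wish, or more precisely a bound with an exponential factor $\exp(-cR/\rho)$ against the $L^2$-average on the ball. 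Since $\rho(x_0)<\infty$ and, by \eqref{critical}, $\rho$ can only grow polynomially, while the $L^2$-average of $u$ on $B((x_0,t_0),R)$ grows at most polynomially in $R$ by the previous paragraph, letting $R\to\infty$ kills $|u(x_0,t_0)|$. The same reasoning applies with $(x_0,t_0)$ replaced by an arbitrary point $(x_1,t_1)\in X\times\rr$, because \eqref{global-control} centered at $(x_0,t_0)$ is equivalent (up to constants, using doubling and \eqref{critical}) to the analogous condition centered at $(x_1,t_1)$; hence $u\equiv0$ on $X\times\rr$.

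I expect the main obstacle to be the careful bookkeeping that converts the weighted $L^2$-hypothesis \eqref{global-control} into a pointwise polynomial growth bound for $|u|$ \emph{uniform in the center}, and then matching the exponent of this polynomial growth against the exponent/exponential rate available in the mean value property. In particular one must check that the factor $1+R^2\fint_{B(x_0,R)}V\,\d\mu$ in the H\"older estimate, and any powers of $R/\rho$ entering the iteration, are all dominated by the decay in Lemma \ref{mean} once $R$ is large; this is the technical heart, and it relies essentially on $q>\max\{Q/2,1\}$ so that $V$ satisfies the Kato/self-improvement estimates (Lemma \ref{K}, the Kato condition $(K)$) guaranteeing $\rho(x)\in(0,\infty)$ and controllable growth of $V$-averages. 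A secondary point is that the mean value property itself must be established (as Lemma \ref{mean}) via the Green function and kernel estimates on $X\times\rr$ rather than on $X$; assuming that lemma, the Liouville conclusion follows by the limiting argument described above.
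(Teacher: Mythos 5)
Your plan is essentially the same as the paper's proof: combine the mean-value property of Lemma \ref{mean} (pointwise control of $|u|$ on $B_0$ by the average of $|u|$ over $2B_0$ times $(1+R/\rho(x))^{-N}$ for arbitrary $N$) with the weighted $L^2$ hypothesis \eqref{global-control}, then send $R\to\infty$. You also correctly identify the need to propagate \eqref{global-control} from one center to all centers, which is exactly the paper's Step 1 (``well-posedness'').

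That said, your plan includes some detours the paper does not take. First, the H\"older regularity (Theorem \ref{holder}/\ref{holder1}) plays no role in the Liouville proof at all: Lemma \ref{mean} already gives the pointwise bound directly from the $L^1$-average, so there is no need for an ``$L^\infty$--$L^2$ bound for subsolutions'' or any regularity upgrade. Second, no dyadic annular decomposition is needed to convert \eqref{global-control} into growth bounds: since on $2B_0=B((x_0,t_0),2R)$ the weight $(1+d)^{\bz}\mu(B(\cdot,1+d))$ is $\lesssim R^{\bz}\mu(2B_0)$ by doubling, a single application of Cauchy--Schwarz turns $\fint_{2B_0}|u|\,\d\mu$ into $R^{\bz/2}$ times the square root of the global weighted integral. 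Third, the decay in Lemma \ref{mean} is a polynomial $(1+R/\rho(x))^{-N}$ with $N$ arbitrary, not an honest exponential $\exp(-cR/\rho)$; the former is what is proved and suffices since one may take $N$ large enough (the paper uses $N=1+\bz$) to beat the $R^{\bz/2}$ growth, using that $\rho(x)\approx\rho(x_0)$ stays bounded as $R\to\infty$. Your ``technical heart'' worry about matching exponents is thus a non-issue in the paper's cleaner execution: the worrying factor $1+R^2\fint_{B(x_0,R)}V\,\d\mu$ never enters, because the H\"older estimate is never invoked.
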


%

\begin{remark}
In \cite{DYZ2014} and \cite{JL2022}, when the critical reverse H\"{o}lder order of $V$ is one half of the upper dimension of the underlying space,
Duong-Yan-Zhang and Jiang--Li established the Liouville property of the $\mathbb{L}$-harmonic function on $\rn$ and on $X$, respectively.
The former utilizes the Green function, and the latter utilizes the heat kernel.
If we consider the Schr\"{o}dinger equation
$$-\partial_t^2u+\mathcal{L}u+Vu=0,$$
in $X\times \rr_+$,
the dimension of the underlying space increases by 1 automatically.
However, noticing that the non-negative potential $V(\cdot)$ is time independent,
we believe that the critical reverse H\"{o}lder order of $V$ is also time independent in the product space setting.
To remove the time dimension 1, our main tool is the theory of the elliptic operator on the product space $X\times \rr$.
With the help of the H\"{o}lder continuity and a variety of mean value property for the $\mathbb{L}$-harmonic function (see Theorem \ref{holder} and Lemma \ref{mean}),
we can prove the $\mathbb{L}$-harmonic function on $X\times \rr$
enjoys the same Liouville property under the requirement $q>\max\{Q/2,1\}$ as in the $X$ case,
whose proof is different from that of Duong-Yan-Zhang \cite[Lemma 2.7]{DYZ2014} and Jiang--Li in \cite[Proposition 4.4]{JL2022};
see the proof of Proposition \ref{Liouville} below.
%
%
%
\end{remark}

\begin{remark}
Here it is remarkable that, in Proposition \ref{Liouville}, we prove the Liouville theorem for the solution to the Schr\"{o}dinger equation in $X\times \rr$.
However, the relationship between the BMO function and the Carleson measure has to do with the upper half-space $X\times \rr_+$.
We can establish another Liouville property for the $\mathbb{L}$-harmonic function on $X\times \rr_+$ with zero trace directly; see \cite{YZ2021} for the heat equation.
More precisely, if an $\mathbb{L}$-harmonic function on $X\times \rr_+$ with zero trace satisfies
$$\int_{X\times\rr_+} \frac{|u(x,t)|}{(1+d((x,t),(x_0,t_0)))^{\bz}\mu(B((x_0,t_0),1+d((x,t),(x_0,t_0))))}\d\mu(x,t)\le C{(x_0,t_0,\bz)}<\infty$$
for some $(x_0,t_0)\in X\times \rr$ and $\bz>0$, then $u=0$  in $X\times \rr_+$,
where $0\le V\in A_\infty(X)\cap RH_{q}(X)$ for some $q>\max\{Q/2,1\}$;
see Theorem \ref{Liouville1} below for more details.
\end{remark}


\subsection{Proof of Theorem \ref{Liouville}}\label{sec3.3}
\hskip\parindent
To show the Liouville theorem, we first establish a variety of mean value property for the $\mathbb{L}$-harmonic function on $X\times\rr$.

\begin{lemma}\label{mean}
Let $(X,d,\mu,\mathscr{E})$ be a complete Dirichlet metric measure space satisfying a doubling property $(D)$ and admitting an $L^2$-Poincar\'{e} inequality $(P_2)$.
Assume that $u$ is a weak solution to the elliptic equation
$$\mathbb{L}u=-\partial^2_tu+\mathscr{L}u=-\partial^2_tu+\mathcal{L}u+Vu=0$$
in $2B_0\subset X\times\rr$ with $B_0=B((x_0,t_0),R)$, where $0\le V\in A_\infty(X)\cap RH_q(X)$ for some $q>\max\{Q/2,1\}$.
For any $N>0$, then there exists a constant $C>0$ independent of $u$ and $B_0$ such that for any $(x,t)\in B_0$,
$$
|u(x,t)|\le C\fint_{2B_0}|u|\d\mu \lf(1+\frac{R}{\rho(x)}\r)^{-N}.
$$
\end{lemma}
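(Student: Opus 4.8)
The plan is to prove this exponential-decay mean value property by iterating a basic "gain" estimate on dyadic annuli, using the critical function $\rho$ as the natural scale. The key mechanism is that on a ball of radius comparable to $\rho(x)$, the potential term $Vu$ in the equation $\mathbb{L}u = 0$ forces a definite decrease in the sup (or average) of $|u|$ as one passes to a concentric ball of doubled radius, and iterating this decrease $\sim R/\rho(x)$ times produces a geometric-type decay; since the argument can be run with any fixed gain factor, one obtains decay faster than any polynomial, i.e. the $(1+R/\rho(x))^{-N}$ bound for all $N$.

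Concretely, first I would reduce to showing: there exists $\theta \in (0,1)$ and $c_0>1$ such that whenever $u$ solves $\mathbb{L}u=0$ in $B((x,t_0),2r)$ with $r \ge \rho(x)$, one has $|u(x,t_0)| \le \theta \fint_{B((x,t_0),2r)}|u|\,\d\mu$, or more precisely a comparison $\fint_{B_r}|u| \le \theta \fint_{B_{2r}}|u|$ up to the usual bookkeeping. The mechanism for this gain is the representation formula from the proof of Theorem \ref{holder}/\ref{holder1}: write $u\phi(x,t_0)$ against the Green function $G_{\mathbb{L}}$ (or $G_{\mathbb{L}-V}$) on $X\times\rr$, where $\phi$ is a cutoff adapted to $B_{2r}$. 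The term involving $\int \phi\, G_{\mathbb{L}-V}\, u\, V\, \d\mu$ is controlled using Lemma \ref{K} (the Kato condition) together with $\frac{r^2}{\mu(B(x,r))}\int_{B(x,r)} V\,\d\mu \gtrsim 1$ for $r\gtrsim\rho(x)$, which is exactly what makes this term produce a factor strictly smaller than $1$ times $\sup|u|$ rather than merely being bounded. Combined with the Caccioppoli inequality for the remaining gradient terms and the size/Hölder estimates for the Green function from Lemma \ref{lem-2}, this gives the one-step gain. Here I would use the already-established Hölder regularity (Theorem \ref{holder} or \ref{holder1}) to pass freely between pointwise values of $u$ and its averages.

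Next I would iterate. Set $R_j = 2^{-j}R$ for $j=0,1,\dots,J$ where $J \approx \log_2(R/\rho(x))$, so that $R_J \approx \rho(x)$. Applying the one-step gain on each $B((x,t_0),R_j)$ (valid as long as $R_j \gtrsim \rho(x)$) yields $\fint_{B_{R_J}}|u| \le \theta^J \fint_{B_{R_0}}|u| \lesssim \theta^J \fint_{2B_0}|u|$, and then standard elliptic estimates on the remaining ball $B_{R_J} \approx B(x,\rho(x))$ (where $\rho$ is essentially constant, so the Schrödinger operator behaves like a perturbation of order one) bound $|u(x,t_0)|$ by $\fint_{B_{R_J}}|u|$. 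Since $\theta^J = \theta^{\log_2(R/\rho(x))} = (R/\rho(x))^{-\log_2(1/\theta)}$, we get polynomial decay with exponent $\log_2(1/\theta)$; to upgrade to arbitrary $N$, one runs the whole scheme with a finer iteration (doubling radius replaced by a larger dilation factor $c_0$, making $\theta$ as small as desired relative to $c_0$), exactly as in Shen's treatment \cite{Sh1999} of $-\Delta+V$. One must also handle the offset in the $t$-variable, i.e. that $(x,t)\in B_0$ rather than $(x,t_0)$, but since $|t-t_0|<R$ this only shifts the center within a comparable ball and is absorbed harmlessly; the critical-function estimates \eqref{critical} ensure $\rho(x)$ controls $\rho$ on all the relevant balls up to constants.

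The main obstacle I anticipate is making the one-step gain genuinely quantitative: extracting a constant $\theta<1$ (rather than just $\le C$) from the Green-function representation requires carefully isolating the potential term and showing its coefficient $\frac{r^2}{\mu(B(x,r))}\int V\,\d\mu$, which is $\gtrsim 1$ when $r\gtrsim\rho(x)$, actually wins against the other terms — in Shen's Euclidean argument this is where one uses that the fundamental solution of $-\Delta+V$ is strictly smaller than that of $-\Delta$, quantified by the perturbation estimate; here the analogue is Lemma \ref{lem-1}(iii) / the heat-kernel perturbation bound of \cite{BDL2018}, and one must check it interacts correctly with the product structure of $X\times\rr$. A secondary technical point is ensuring all constants in the iteration are uniform in $j$ (independent of the scale $R_j$), which relies on the scale-invariance of $(D)$, $(P_2)$, the Caccioppoli inequality, and the Kato condition $(K)$ — all available in the stated generality.
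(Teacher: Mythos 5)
Your scheme is the iterative alternative sketched in the remark immediately following the paper's Lemma~\ref{mean} (and is essentially Shen's original argument in \cite{Sh1999}), but the paper's own proof of Lemma~\ref{mean} takes a much more direct route that bypasses iteration entirely: one represents $u\phi(x,t)$ against the Green function $G_{\mathbb{L}}$ of the \emph{full} Schr\"odinger operator $-\partial_t^2+\mathcal{L}+V$ on $X\times\rr$ (not against $G_{\mathbb{L}-V}$). Because $G_{\mathbb{L}}$ already incorporates $V$, the potential term does not appear in the representation formula at all --- one has exactly $u\phi(x,t)=\int\langle\nabla G_{\mathbb{L}},u\nabla\phi\rangle\,\d\mu-\int G_{\mathbb{L}}\langle\nabla\phi,\nabla u\rangle\,\d\mu$, and the decay factor $\left(1+R/\rho(x)\right)^{-N}$ comes out in one shot from the size bound of Lemma~\ref{lem-2}(i), $G_{\mathbb{L}}(\cdot,\cdot)\lesssim_N \frac{d^2}{\mu(B(d))}\big(1+d/\rho\big)^{-N}$, because $d\approx R$ on the annulus $\supp\nabla\phi$. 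The arbitrary-$N$ decay is inherited directly from the heat-kernel estimate $(GUB)$ via subordination; there is no bootstrapping, and $u$ need only solve in $2B_0$ (the iterative scheme, requiring Caccioppoli at successively larger scales, needs a strictly bigger domain, which is exactly why the remark's version ends up with $4B_0$ on its right-hand side).

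Two concrete gaps in your route as written. First, your proposed one-step gain $\theta<1$ is to be extracted from the term $\int\phi\,G_{\mathbb{L}-V}\,u\,V\,\d\mu$ in the $G_{\mathbb{L}-V}$-representation, but $u$ is not sign-definite, so this term does not carry a definite sign; without passing through the subharmonicity of $|u|$ (as Shen does) or through the Fefferman--Phong inequality for $|\nabla u|^2+V|u|^2$ (which is what the paper's remark invokes), the Kato lower bound $\frac{r^2}{\mu(B(x,r))}\int_{B(x,r)}V\gtrsim 1$ for $r\gtrsim\rho(x)$ does not by itself force $|u(x,t_0)|\le\theta\fint_{B_{2r}}|u|$. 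Second, your upgrade to arbitrary $N$ by replacing doubling with a large dilation $c_0$ and asserting $\theta$ becomes ``as small as desired relative to $c_0$'' needs to be verified: $\log_{c_0}(1/\theta)\ge N$ requires $\theta\le c_0^{-N}$, and a fixed one-step gain mechanism will not automatically deliver that scaling. Shen's actual route to arbitrary $N$ is in fact the same as the paper's: prove the polynomial decay of the fundamental solution/Green function itself and then read off the decay of $u$ from the representation, rather than accumulate it through a geometric iteration.
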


\begin{proof}
Pick a Lipschitz function $\phi$ supported in $3B_0/2$ such that $\phi=1$ on $4B_0/3$ and $|\nabla_{x,t}\phi|\le C/R$.
Then we have
\begin{align*}
u\phi(x,t)
&=\int_{X\times\rr}\left\langle\nabla_{y,s}G_\mathbb{L}((x,t),(y,s)),\nabla_{y,s}(u\phi)(y,s)\right\rangle\d\mu(y,s)
 +\int_{X\times\rr}G_\mathbb{L}((x,t),(y,s))(u\phi)(y,s)V(y)\d\mu(y,s)\\
&=\int_{X\times\rr} \left\langle\nabla_{y,s}G_\mathbb{L}((x,t),(y,s)),u\nabla_{y,s}\phi(y,s)\right\rangle\d\mu(y,s)
 - \int_{X\times\rr}G_\mathbb{L}((x,t),(y,s)) \left\langle\nabla_{y,s}\phi(y,s),\nabla_{y,s}u(y,s)\right\rangle\d\mu(y,s).
\end{align*}
By Caccioppoli's inequality and the size condition for $G_\mathbb{L}$ from Section \ref{estimate-kernel}, it follows that for any $(x,t)\in B_0$,
\begin{align*}
|u(x,t)|
&\le \frac{C}{R}\lf(\int_{(4B_0/3)\setminus (3B_0/2)} \left|\nabla_{y,s}G_\mathbb{L}((x,t),(y,s))\right|^2\d\mu(y,s)\r)^{1/2}
 \lf(\int_{(4B_0/3)\setminus (3B_0/2)}|u(y,s)|^2\d\mu(y,s)\r)^{1/2} \\
&\ +\frac{C}{R}\lf(\int_{(4B_0/3)\setminus (3B_0/2)}\big|G_\mathbb{L}((x,t),(y,s))\big|^2\d\mu(y,s)\r)^{1/2}\lf(\int_{(4B_0/3)\setminus (3B_0/2)} \left|\nabla_{y,s}u(y,s)\right|^2\d\mu(y,s)\r)^{1/2} \\
&\le \frac{C}{R^2}\lf(\int_{(5B_0/3)\setminus (5B_0/4)}\big|G_\mathbb{L}((x,t),(y,s))\big|^2\d\mu(y,s)\r)^{1/2}
 \lf(\int_{2B_0}|u|^2\d\mu\r)^{1/2} \\
&\le C\fint_{2B_0}|u|\d\mu \lf(1+\frac{R}{\rho(x)}\r)^{-N},
\end{align*}
where the last line is a direct consequence of the mean value property.
This completes the proof.
\end{proof}

\begin{remark}
In the proof of Lemma \ref{mean}, the Green function $G_\mathbb{L}$ plays a key role.
 Here another proof is provided but all pertinent results are stated without detail.
We first obtain by Fefferman-Phong's inequality and Caccioppoli's inequality that
$$\lf(\fint_{B_0}|u|^2\d\mu\r)^{1/2} \le C\lf(\fint_{2B_0}|u|^2\d\mu\r)^{1/2}\lf(1+\frac{R}{\rho(x)}\r)^{-\frac{1}{k_0+1}}.$$
Then the iterative technique shows that for any $N>0$,
$$\lf(\fint_{2B_0}|u|^2\d\mu\r)^{1/2} \le C\lf(\fint_{4B_0}|u|^2\d\mu\r)^{1/2}\lf(1+\frac{R}{\rho(x)}\r)^{-N}.$$
Finally the classical mean value property tells us for any $(x,t)\in B_0$,
$$
|u(x,t)|\le C\fint_{4B_0}|u|\d\mu \lf(1+\frac{R}{\rho(x)}\r)^{-N}.
$$
Though the domain on the RHS of the above estimate is $4B_0$, it is sufficient to prove the Liouville theorem.
\end{remark}

\begin{proof}[Proof of Theorem \ref{Liouville}]
We first claim that the assumption \eqref{global-control} is well-posedness.
Indeed, if this property holds for some $(x_0,t_0)\in X\times \rr$, then it holds that for all $(y,s)\in X\times \rr$
%
\begin{align*}
&\int_{X\times\rr} \frac{|u(x,t)|^2}{(1+d((x,t),(y,s)))^{\bz}\mu(B((y,s),1+d((x,t),(y,s))))}\d\mu(x,t)\\
&\ \le \lf\{\int_{X\times\rr\setminus B((y,s),2d((y,s),(x_0,t_0)))}+\int_{B((y,s),2d((y,s),(x_0,t_0)))}\r\} \cdots\d\mu(x,t)\\
&\ \le C{\int_{X\times\rr\setminus B((y,s),2d((y,s),(x_0,t_0)))}} \frac{|u(x,t)|^2}{(1+d((x,t),(x_0,t_0)))^{\bz}\mu(B((x_0,t_0),1+d((x,t),(x_0,t_0))))}\d\mu(x,t) \\
&\ \ +C\|u\|_{L^\fz(B((y,s),2d((y,s),(x_0,t_0))))} <\infty
\end{align*}
by the locally boundedness of $u$.
Next, take a ball $B_0=B((x_0,t_0),R)\subset X\times \rr$ with $R>100(1+\rho(x_0))$,
and one may invoke Lemma \ref{mean} with $N=1+\bz$ to obtain that for all $x\in B(x_0,\rho(x_0))$ and $t_0-1<t<t_0+1$,
\begin{align*}
|u(x,t)|
&\le C\lf( \int_{X\times\rr} \frac{|u(y,s)|^2}{(1+d((y,s),(x_0,t_0)))^{\bz}\mu(B((x_0,t_0),1+d((y,s),(x_0,t_0))))}\d\mu(y,s)\r)^{1/2}R^{\bz/2}\lf(\frac{\rho(x)}{R}\r)^\bz\\
&\le C(x_0,t_0,\bz)R^{-\bz/2}\to 0,\quad R\to\fz,
\end{align*}
where the last inequality is due to \eqref{critical}.
Since $(x_0,t_0)$ is arbitrary, we see that $u=0$ in the whole product space $X\times \rr$.
This completes the proof.
\end{proof}

\subsection{Another Liouville theorem}
\hskip\parindent
In Theorem \ref{Liouville}, we establish a Liouville theorem for the Schr\"{o}dinger equation in $X\times \rr$,
where the Green function associated with $V$ is used.
When we meet the upper half-space $X\times \rr_+$,
what does the Liouville theorem of Schr\"{o}dinger equation in $X\times \rr_+$  look like?
According to our knowledge, a basic analysis tool in $X\times \rr$ is the Green function, and the corresponding tool in $X\times\rr_+$ is the heat kernel or Poisson kernel.
In this subsection, we provide a Liouville theorem for the elliptic Schr\"{o}dinger equation in $X\times \rr_+$,
whose proof is inspired by the parabolic equation.

\begin{theorem}\label{Liouville1}
Let $(X,d,\mu,\mathscr{E})$ be a complete Dirichlet metric measure space satisfying a doubling property $(D)$ and admitting an $L^2$-Poincar\'{e} inequality $(P_2)$.
Assume that $u$ is a weak solution to the elliptic equation
$$\mathbb{L}u=-\partial_t^2u+{\mathscr{L}} u=-\partial_t^2u+\L u+V u=0$$
in $X \times \rr_+$ with zero trace, where $0\le V\in A_\infty(X)\cap RH_{q}(X)$ for some $q>\max\{Q/2,1\}$.
If there exist some $x_0\in X$ and $\bz>0$ such that
\begin{align*}
\int_{X\times\rr_+} \frac{|u(x,t)|^2}{(1+t+d(x,x_0))^{\bz}\mu(B(x_0,1+t+d(x,x_0)))}\d\mu(x,t)\le C{(x_0,\bz)}<\infty,
\end{align*}
then $u=0$ in $X\times \rr_+$.
\end{theorem}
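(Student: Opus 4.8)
\textbf{Proof strategy for Theorem \ref{Liouville1}.}
The plan is to reflect $u$ oddly across $\{t=0\}$ so as to apply the $X\times\rr$ Liouville theorem (Theorem \ref{Liouville}) to the extension. Since $u$ has zero trace on $X\times\{0\}$, define
$$
\widetilde u(x,t)=\begin{cases} u(x,t), & t>0,\\[2pt] -u(x,-t), & t<0,\end{cases}
$$
and $\widetilde u(x,0)=0$. First I would verify that $\widetilde u$ is a weak solution of $\mathbb{L}\widetilde u=-\partial_t^2\widetilde u+\L\widetilde u+V\widetilde u=0$ in all of $X\times\rr$. On $X\times\rr_+$ and $X\times\rr_-$ this is immediate since $V$ is time-independent and the equation is even in $t$; the point is to check there is no distributional contribution along $X\times\{0\}$. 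Because $u$ has zero trace (so $\widetilde u\in W^{1,2}_{\mathrm{loc}}$ across the hyperplane by the odd reflection and the zero boundary value), and the odd symmetry forces the normal flux $\partial_t u(\cdot,0^+)$ to match $\partial_t\widetilde u(\cdot,0^-)$, testing against $\varphi\in C_c^\infty(X\times\rr)$ and splitting the integral into $t>0$ and $t<0$ yields two boundary terms that cancel. This is the standard removability-of-a-hyperplane argument; I would spell out the integration by parts once, using that $\widetilde u\in W^{1,2}_{\mathrm{loc}}(X\times\rr)$, which is the real content.

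Next I would check the decay hypothesis \eqref{global-control} of Theorem \ref{Liouville} for $\widetilde u$, with the same $\bz$ and with base point $(x_0,0)$. By the odd symmetry,
$$
\int_{X\times\rr}\frac{|\widetilde u(x,t)|^2}{(1+d((x,t),(x_0,0)))^{\bz}\,\mu(B((x_0,0),1+d((x,t),(x_0,0))))}\,\d\mu(x)\,\d t
= 2\int_{X\times\rr_+}\frac{|u(x,t)|^2}{(1+d((x,t),(x_0,0)))^{\bz}\,\mu(B((x_0,0),1+d((x,t),(x_0,0))))}\,\d\mu(x)\,\d t.
$$
On $X\times\rr_+$ one has $d((x,t),(x_0,0))=\sqrt{d(x,x_0)^2+t^2}\approx t+d(x,x_0)$, and the product measure $\mu(B((x_0,0),r))\approx \mu(B(x_0,r))\,r$ for $r\ge1$ by the doubling property on $X$; hence the integrand on $X\times\rr_+$ is comparable (up to adjusting $\bz$ by $1$, which is harmless) to $|u(x,t)|^2\big[(1+t+d(x,x_0))^{\bz}\mu(B(x_0,1+t+d(x,x_0)))\big]^{-1}$, which is integrable by hypothesis. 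So \eqref{global-control} holds for $\widetilde u$, and Theorem \ref{Liouville} gives $\widetilde u\equiv0$, hence $u\equiv0$ in $X\times\rr_+$.

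\emph{Main obstacle.} The delicate point is the gluing step: showing that the odd reflection $\widetilde u$ is genuinely a global weak solution, i.e.\ that no singular measure is created on $X\times\{0\}$. This requires knowing that $u$ attains its zero trace in a strong enough sense for $\widetilde u$ to lie in $W^{1,2}_{\mathrm{loc}}(X\times\rr)$ and for the flux-matching across the hyperplane to hold; in the metric setting one should invoke the local elliptic regularity of $\mathbb L$-harmonic functions up to the boundary (continuity up to $X\times\{0\}$, together with the energy estimate) that is available under $(D)$ and $(P_2)$, together with the fact that $V$ does not depend on $t$ so that the reflected potential is unchanged. Once this is in place, the remaining comparison-of-weights computation is routine doubling-measure bookkeeping.
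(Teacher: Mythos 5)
Your proposal takes a genuinely different route from the paper's own proof of Theorem \ref{Liouville1}. The paper works directly in $X\times\rr_+$, obtaining a mean-value-type pointwise bound for $u$ by testing the equation against $p_{t-s}^\LV$ with Lipschitz cutoffs, then using the Caccioppoli inequality and the decay estimate $(PUB)$ for the Poisson kernel to show $|u(x,t)|\le C(x_0,\bz)R^{-3/2}\to 0$. Your route is to reduce to the already-proven Liouville theorem on the full cylinder $X\times\rr$ (Theorem \ref{Liouville}) via reflection; the only real work is then the gluing across $X\times\{0\}$ and some doubling bookkeeping, which you correctly identify as the crux and which your weight comparison (shifting $\bz$ by 1 to absorb the extra factor of $r$ from $\mu(B((x_0,0),r))\approx r\,\mu(B(x_0,r))$) handles cleanly. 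Both approaches are legitimate; the paper's main proof avoids the delicate weak-solution gluing and is self-contained, while yours is shorter given that Theorem \ref{Liouville} is already available. In fact, the paper itself outlines a reflection argument in the remark following its proof of Theorem \ref{Liouville1}.

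However, there is a notable discrepancy worth flagging: the paper's remark sets $w(x,t)=u(x,-t)$ for $t<0$ and calls it an \emph{even} reflection, whereas you use the \emph{odd} reflection $\widetilde u(x,t)=-u(x,-t)$ for $t<0$. Yours is the correct choice for a zero Dirichlet trace. With even reflection, $w$ is continuous across $t=0$ (both one-sided limits vanish), but the normal derivative jumps: $\partial_t w(x,0^-)=-\partial_t u(x,0^+)$, so the distributional identity picks up a term $-2\,\partial_t u(x,0^+)\,\delta_{\{t=0\}}$, and $w$ fails to be a global weak solution unless the Neumann data also vanishes (in which case unique continuation already forces $u\equiv0$, making the reduction vacuous). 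The odd reflection matches fluxes, $\partial_t\widetilde u(x,0^-)=\partial_t u(x,0^+)$, so the boundary terms in the integration by parts cancel and $\widetilde u$ is genuinely $\mathbb L$-harmonic across the interface; this is the classical Schwartz reflection for the Dirichlet problem. Your version of the reduction is therefore the correct one, and your acknowledgment that the remaining gap is to verify $\widetilde u\in W^{1,2}_{\mathrm{loc}}(X\times\rr)$ (which follows from the zero trace together with the local energy estimate/regularity up to the boundary) is accurate.
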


\begin{proof}
{\bf Step 1:} the test functions.
For any $x_0\in X$, 
let $Q_0=B(x_0,R)\times (0,R)$ be a cylinder with $R>100(1+\rho(x_0))$.
We take a Lipschitz function $\varphi$ on $X$ with $\supp  \varphi \subset B(x_0,5R)$ such that $\varphi=1$ on $B(x_0,4R)$,
$|\nabla_x \varphi|\le C/R$,
and take a smooth function $\phi$ on $\rr$ with
$\supp  \phi\subset (-5R,5R)$ such that $\phi=1$ on $(-4R,4R)$, $|\partial_t\phi|<C/R$.

\smallskip

{\bf Step 2:} the identity for weak solutions.
It follows from $-\partial_t^2u+\LV u=0$ with zero trace that for any $0<t<R$,
\begin{align*}
u\vz\phi(x,t)
&=-\int_0^t\int_X p_{t-s}^\LV(x,y)  \left[\partial^2_su(y,s)\varphi(y)\phi(s)+2\partial_su(y,s)\varphi(y)\partial_s\phi(s)+u(y,s)\varphi(y)\partial_s^2\phi(s)\right]\d\mu(y,s) \\
&\ +\int_0^t\int_X  \left\langle \nabla_yp_{t-s}^\LV(x,y), \varphi(y)\phi(s)\nabla_yu(y,s)+u(y,s)\phi(s)\nabla_y\varphi(y)\right\rangle\d\mu(y,s) \\
&\ +\int_0^t\int_X p_{t-s}^\LV(x,y)\varphi(y)\phi(s)V(y)u(y,s)\d\mu(y,s) \\
&=
 -\int_0^t\int_X p_{t-s}^\LV(x,y)\phi(s)\left\langle \nabla_y\varphi(y), \nabla_yu(y,s)\right \rangle\d\mu(y,s)
 +\int_0^t\int_X \left\langle \nabla_yp_{t-s}^\LV(x,y), u(y,s)\phi(s)\nabla_y\vz(y)\right\rangle\d\mu(y,s),
\end{align*}
since $\supp  \partial_t\phi\subset (-5R,-4R)\cup(4R,5R)$.

\smallskip

{\bf Step 3:} the pointwise bound for $u$.
By H\"{o}lder's inequality and Caccioppoli's inequality for the Poisson kernel from Lemma \ref{lem-1}, we deduce that for any $(x,t)\in B(x_0,\rho(x_0))\times (0,R)$,
\begin{align*}
|u(x,t)|
&\le \frac{C}{R^2} \int_0^{t}\int_{B(x_0,5R)\setminus B(x_0,4R)} \left[p^\LV_{t-s}(x,y) \left|r\nabla_y u(y,s)\right|+ \left|r\nabla_yp^\LV_{t-s}(x,y)\right||u(y,s)|\right]\d\mu(y,s) \\
&\le \frac{C}{R^2} \lf(\int_0^{t}\int_{B(x_0,5R)\setminus B(x_0,4R)} \left|p^\LV_{s}(x,y)\right|^2\d\mu(y,s)\r)^{1/2}\lf(\int_{5Q_0} |R\nabla_y u(y,s)|^2\d\mu(y,s)\r)^{1/2} \\
&\ + \frac{C}{R^2} \lf(\int_0^{t}\int_{B(x_0,5R)\setminus B(x_0,4R)} \left|R\nabla_yp^\LV_{s}(x,y)\right|^2\d\mu(y,s)\r)^{1/2} \lf(\int_{6Q_0}|u(y,s)|^2\d\mu(y,s)\r)^{1/2} \\
&\le\frac{C}{R^2}\lf[\int_0^{R}\int_{B(x_0,6R)\setminus B(x_0,3R)}\left(p^\LV_{s}(x,y)  \left|R^2\partial_s^2 p^\LV_{s}(x,y)\right| +{p^\LV_{s}(x,y)^2}\right){\d\mu(y,s)}\r]^{1/2}
 \lf(\int_{6Q_0}|u(y,s)|^2\d\mu(y,s)\r)^{1/2}.
\end{align*}

\smallskip

{\bf Step 4:} control the Poisson kernel term.
Recall that $R>100(1+\rho(x_0))$ and $\rho(x)\approx \rho(x_0)$ for any $x\in B(x_0,\rho(x_0))$ (see \eqref{critical}).
From the estimate on the Poisson kernel in $(PUB)$, it follows that
$$
\int_0^{R}\int_{B(x_0,6R)\setminus B(x_0,3R)}p^\LV_{s}(x,y)^2 {\d\mu(y,s)}
 \le \int_0^{R}\int_{B(x_0,6R)}\frac{C}{\mu(B(x_0,R))^2} \lf(\frac{\rho(x)}{R}\r)^{\bz} {\d\mu(y,s)}
 \le C\frac{R}{\mu(B(x_0,R))} \lf(\frac{\rho(x_0)}{R}\r)^{\bz},
$$
and similarly
$$
\int_0^{R}\int_{B(x_0,6R)\setminus B(x_0,3R)}p^\LV_{s}(x,y) \left|R^2\partial_s^2 p^\LV_{s}(x,y)\right|^2 {\d\mu(y,s)}
 \le C\frac{R}{\mu(B(x_0,R))} \lf(\frac{\rho(x_0)}{R}\r)^{\bz}.
$$

\smallskip

{\bf Step 5:} completion of the proof.
By two estimates in Steps 3-4 above, we deduce that for any $(x,t)\in B(x_0,\rho(x_0))\times(0,R)$,
$$
|u(x,t)|
\le \frac{C}{R^{3/2}} \lf(\frac{\rho(x_0)}{R}\r)^{\bz/2}R^{\bz/2}\left(\int_{6Q_0}\frac{|u(y,s)|^2}{R^\bz \mu(B(x_0,R))}\d\mu(y,s)\right)^{1/2} \\
\le C(x_0,\bz)R^{-3/2},
$$
which converges to zero as $R\to\infty$.
Therefore $u= 0$ on $B(x_0,\rho(x_0))\times(0,\fz)$. As $x_0$ is arbitrary, we see that $u=0$ in $X\times\rr_+$.
This completes the proof.
\end{proof}

\begin{remark}
For different cases $X\times \rr$ and $X\times \rr_+$, we adopt the Green function and the Poisson kernel as tools.
In fact, with the help of another approach, we can derive the Liouville theorem of Schr\"{o}dinger equation in $X\times \rr_+$
from the conclusion on the product space $X\times \rr$.
More precisely, since the trace of $u$ is equal to zero, we define a $(-\partial_t^2+{\mathscr{L}})$-harmonic function on $X\times \rr$ as
$$
w(x,t)=
\begin{cases}
\displaystyle u(x,t),\quad &t>0; \\
\displaystyle 0,&t=0; \\
\displaystyle u(x,-t), &t<0.
\end{cases}
$$
Note that $w$ is a even function with respect to the time variable.
Then $w$ admits the weighted $L^2$-integrability on $X\times \rr$ becaues $u$ admits a similar property on $X\times \rr_+$.
By Theorem \ref{Liouville}, we arrive at $w=0$ in $X\times \rr$, and hence $u=0$ in $X\times \rr_+$.
Though this argument is more or less simple, we still provide the Poisson kernel approach to prove the Liouville theorem of Schr\"{o}dinger equation in $X\times \rr_+$
since we aim to emphasize the difference between the Green function and the Poisson kernel.
\end{remark}

\begin{remark}
It is remarkable that Auscher--Ben Ali \cite{AB2007} achieve the $L^p$ estimate:
for $n\ge1$, if $0\le V\in RH_q(\rn)$ for some $q>1$, then
$$
\|\Delta f\|_{L^p(\rn)}+\|V f\|_{L^p(\rn)}\le C\|(-\Delta +V)f\|_{L^p(\rn)}$$
holds for $1<p<q+\varepsilon$. 
This result extends the one of Shen obtained in \cite{Sh1995} under the restriction $q>n/2$.
It is somewhat surprising that they can go below the order $n/2$
which is critical for the regularity theory of the elliptic operator $-\Delta+V$.
One might wonder if there is any possibility of breaking the requirement $q> \max\{Q/2,1\}$ for the H\"older continuity and Liouville theorem (i.e., Theorem \ref{thm:Liouville}) to $q>1$. This is an interesting problem, while to some extent is another story since it's unclear how to define the auxiliary function $\rho$ determined by $V$ whenever $V\in RH_q$ for some $q<n/2$. Under the framework with $\rho$ well-defined, the reverse H\"older index given in our results  in the previous and this sections is critical.
\end{remark}

\section{Application I: harmonic function with BMO trace}\label{s4}
\hskip\parindent
To state the main result of this section, we first introduce some function classes.

\begin{definition}
An $\mathbb{L}$-harmonic function $u$ defined on $X\times \rr_+$
is said to be in $\mathrm{HMO}_\LV=\mathrm{HMO}_\LV(X\times \rr_+)$,
the {space of functions of harmonic mean oscillation},
if $u$ satisfies
$$
\|u\|_{\HMO_\LV}=\sup_{B} \lf(\int_0^{r_B}\fint_B  \left|t\nabla_{x,t} u\right|^2\d \mu\frac{\d t}{t}\r)^{1/2}<\fz.
$$
\end{definition}

\begin{definition}
A locally integrable function $f$ defined on $X$ is said to be in $\mathrm{BMO}_\LV=\mathrm{BMO}_\LV(X)$,
the {space of functions of bounded mean oscillation}, if $f$ satisfies
$$
\|f\|_{\BMO_\LV}=\sup_{B:r_B< \rho(x_B)}\lf(\fint_B|f-f_B|^2\d \mu\r)^{1/2}+\sup_{B:r_B\ge \rho(x_B)}\lf(\fint_B|f|^2\d \mu\r)^{1/2}<\fz.
$$
\end{definition}

\begin{theorem}\label{thm1}
Let $(X,d,\mu,\mathscr{E})$ be a complete Dirichlet metric measure space satisfying a doubling property $(D)$ and admitting an $L^2$-Poincar\'{e} inequality $(P_2)$.
Suppose that $0\le V\in A_\infty(X)\cap RH_{q}(X)$ for some $q>\max\{Q/2,1\}$.
Then $u\in \mathrm{HMO}_{\LV}(X\times \rr_+)$ if and only if $u=e^{-t\sqrt{\LV}}f$ for some $f\in \BMO_\LV(X)$.
Moreover, it holds that 
$$ \|u\|_{\HMO_{\LV}} \approx\|f\|_{\BMO_\LV}.$$
%
%
%
\end{theorem}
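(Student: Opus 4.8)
\textbf{Proof plan for Theorem \ref{thm1}.}
The plan is to establish the two inclusions separately, following the classical Fabes--Johnson--Neri strategy but replacing harmonicity with the $\mathbb L$-elliptic theory developed above and inserting the critical-index regularity results (Theorems \ref{holder}, \ref{Liouville}, \ref{Liouville1} and Lemma \ref{mean}) wherever the earlier works used the non-sharp index.

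\emph{Step 1: from trace to solution.} Given $f\in \BMO_\LV(X)$, set $u=e^{-t\sqrt{\LV}}f$. First I would check that $u$ is well-defined and $\mathbb L$-harmonic on $X\times\rr_+$ by the subordination formula together with the Poisson upper bound $(PUB)$ and the local $\BMO_\LV$ growth of $f$. The main quantitative estimate is the Carleson bound
$$
\sup_B \frac1{\mu(B)}\int_0^{r_B}\int_B |t\nabla_{x,t}u|^2\,\d\mu\,\frac{\d t}{t}\lesssim \|f\|_{\BMO_\LV}^2 ,
$$
which I would obtain by the usual two-scale decomposition of $f$ relative to a fixed ball $B=B(x_B,r_B)$: split $f=f_B\mathbbm 1_{2B}+(f-f_B)\mathbbm 1_{2B}+(f-f_B)\mathbbm 1_{(2B)^c}$ when $r_B<\rho(x_B)$, and $f=f\mathbbm 1_{2B}+f\mathbbm 1_{(2B)^c}$ when $r_B\ge\rho(x_B)$. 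The constant piece contributes through $t\nabla_{x,t}e^{-t\sqrt{\LV}}1$, controlled by Lemma \ref{lem-1}(v); the local piece is handled by the $L^2$-boundedness of the square function associated with $\sqrt{\LV}$ (a consequence of $(LY)$ and spectral theory); the tail piece is estimated by pointwise kernel bounds $(PUB)$ together with the standard $\BMO$ annular growth $\fint_{2^{j+1}B}|f-f_B|\lesssim j\,\|f\|_{\BMO_\LV}$, the exponential/polynomial $\rho$-decay absorbing the case $r_B\ge\rho(x_B)$. Summing the geometric series in $j$ gives the claim, so $u\in\HMO_\LV$ with $\|u\|_{\HMO_\LV}\lesssim\|f\|_{\BMO_\LV}$.

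\emph{Step 2: from solution to trace.} Conversely, let $u\in\HMO_\LV(X\times\rr_+)$. For $k\in\nn$ put $u_k(x,t)=u(x,t+1/k)$, which is $\mathbb L$-harmonic on $X\times(-1/k,\infty)\supset X\times\rr_+$ and, by Lemma \ref{mean} and the Carleson condition, satisfies a uniform pointwise bound with the $\rho$-decay; in particular $u(\cdot,1/k)$ is locally in $\BMO_\LV(X)$ with norm controlled by $\|u\|_{\HMO_\LV}$. The key intermediate step — exactly the place where \cite{DYZ2014,JL2022} needed $q>(Q+1)/2$ — is the representation
$$
u_k(x,t)=e^{-t\sqrt{\LV}}\big(u(\cdot,1/k)\big)(x),\qquad (x,t)\in X\times\rr_+ .
$$
To prove it, set $w_k(x,t)=u_k(x,t)-e^{-t\sqrt{\LV}}(u(\cdot,1/k))(x)$; this is $\mathbb L$-harmonic on $X\times\rr_+$ with zero trace, and the uniform pointwise bounds just mentioned (for both terms) give the weighted $L^2$-integrability hypothesis of Theorem \ref{Liouville1}. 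Hence $w_k\equiv0$. This is the heart of the argument: because Theorem \ref{Liouville1} (equivalently Theorem \ref{Liouville} via the even reflection) is available already at $q>\max\{Q/2,1\}$, there is no longer any need to extend $V$ to the product space and pay the extra $1/2$. Once the representation holds, a weak-$\ast$ compactness argument in the dual of $H^1_\LV(X)$ (or a direct diagonal extraction using the uniform $\BMO_\LV$ bounds on $u(\cdot,1/k)$) produces $f\in\BMO_\LV(X)$ with $\|f\|_{\BMO_\LV}\lesssim\|u\|_{\HMO_\LV}$ as a limit point of $u(\cdot,1/k)$; passing to the limit $k\to\infty$ in the representation yields $u=e^{-t\sqrt{\LV}}f$ on $X\times\rr_+$. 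Combined with Step 1 this also gives the reverse norm comparison, hence $\|u\|_{\HMO_\LV}\approx\|f\|_{\BMO_\LV}$.

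\emph{Main obstacle.} The delicate point is Step 2: establishing the uniform (in $k$) local $\BMO_\LV$-bound and the weighted $L^2$ tail control for $w_k$ needed to trigger Theorem \ref{Liouville1}, and then extracting a single boundary function $f$ independent of $k$. The Carleson condition only controls oscillation, so turning it into the pointwise/integrability estimates requires combining Lemma \ref{mean} (mean value property with $\rho$-decay), Caccioppoli, and a careful telescoping over dyadic annuli, much as in the harmonic case but with the potential term tracked throughout; I expect this bookkeeping, rather than any single inequality, to be the technical core.
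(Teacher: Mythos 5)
Your proposal follows essentially the same route as the paper: the trace-to-solution direction via Lemma \ref{lem-1}(v) and the Carleson-measure computation of \cite[Theorem 5.3]{JL2022}, and the solution-to-trace direction via the representation $u(x,t+s)=e^{-t\sqrt{\LV}}(u(\cdot,s))(x)$ established through the critical-index Liouville theorem, which you correctly identify as the crux that removes the extra $1/2$ in the reverse H\"older exponent. Two minor bookkeeping discrepancies with the paper's sketch are worth noting: the paper obtains the uniform bound $\sup_{s>0}\|u(\cdot,s)\|_{\BMO_\LV}\lesssim\|u\|_{\HMO_\LV}$ \emph{after} the representation formula, by combining it with tent-space theory (you state it before, where Lemma \ref{mean} alone only yields a local $L^\infty$ bound with $\rho$-decay sufficient to make the Poisson integral well-defined and to feed Theorem \ref{Liouville1}); and the Banach--Alaoglu/Fefferman--Stein extraction in the paper only produces $u(x,t)=e^{-t\sqrt{\LV}}f(x)+h(x)$ with a residual $t$-independent piece $h$, which is then killed by a second Liouville argument on $X$ itself (\cite[Proposition 4.4]{JL2022}) rather than by passing to the limit directly in the representation as you suggest.
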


Two remarks are in order.

\begin{remark}\label{rem:dim-relax1}
Compared with \cite[Theorem 1.1]{JL2022}, Theorem \ref{thm1} improves the critical reverse H\"{o}lder order $q$ from $(Q+1)/2$ to $\max\{Q/2,1\}$
at the expense of a slight reverse doubling property on the underlying space (namely $n>1$).
The assumption $n>1$ appears naturally when we need to make use of the fundamental solution of the elliptic operator as tool,
and in the case of Euclidean space $\rn$,
the assumption $n>1$ is superfluous.
Therefore our Theorem \ref{thm1} extends the result of Duong-Yan-Zhang \cite{DYZ2014}
by means of improving the critical reverse H\"{o}lder order from $n$ to $n/2$,
relaxing the dimension of Euclidean space $\rn$ from $n\ge3$ to $n\ge2$,
and removing the $C^1$-regularity of the $\mathbb{L}$-harmonic function.
\end{remark}

\begin{remark}\label{rem:dim-relax2}
When $(X,d,\mu)=(\rn,|\cdot|,\d x)$ with $n\ge3$,
Shen \cite{Sh1995} studied the $L^p$-estimate for the Schr\"{o}dinger operator $-\Delta+V$,
where the non-negative potential $V$ is in the reverse H\"older class $RH_q(\rn)$ for some $q> n/2$.
However, the observant reader might notice that
why does our conclusion (or \cite[Corollary 1.2]{JL2022} for another result) only requires the dimension of the underlying space $n\ge 2$,
rather than $n\ge3$ as in \cite{Sh1995}?
Below we explain the reason.
On the one hand, in \cite{Sh1995}, Shen invoke repeatedly the Kato condition
$$
\int_{B(x,R)}\frac{V(y)}{|x-y|^{n-2}}\d y\le \frac{C}{R^{n-2}}\int_{B(x,R)}V(y)\d y.
$$
In fact, the term $|x-y|^{2-n}$ in the integrand above comes from the fundamental solution of $-\Delta$ provided $n\ge3$.
In the case of $n=2$, it should be the logarithmic function $\log|x-y|$.
On the other hand, in the article \cite{JL2022}, the authors employ the theory of the parabolic operator (namely heat kernel), rather than the elliptic operator,
to avoid meeting $(-\Delta)^{-1}$.
In this article, with the help of the Green function in the product space,
we need the dimension of the product space $\rr^{n+1}$ to be equal or greater than 3, and hence $n\ge2$ automatically.
Moreover, when $n=2>1$, since the non-negative potential $V$ on $\rr^2$ is in the reverse H\"{o}lder class $RH_q(\rr^2)$ for some $q>n/2=1$,
the measure $V\d y$ induced by $V$ satisfies the doubling property
$$\int_{B(x,2r)}V\d y\le C\int_{B(x,r)}V\d y$$
and the reverse doubling property
$$\int_{B(x,r)}V\d y\le C\lf(\frac{r}{R}\r)^{2-2/q}\int_{B(x,R)}V\d y$$
for every $x\in \rr^2$ and all $0<r<R<\fz$.
From this, we see that the critical function $\rho(x_1,x_2)$ associated to $V$ on $\rr^2$ admits the similar properties as $\rho(x_1,x_2,x_3)$ on $\rr^3$;
see \cite{Ku2000} for another case $n=2$.
Therefore the dimension of the underlying space in this article (or \cite{JL2022}) can be relaxed to be greater than 1.
\end{remark}

\begin{proof}[Proof of Theorem \ref{thm1}]
{\bf Step 1:} from BMO to HMO.
In \cite{JL2022}, under the requirement $q>\max\{Q/2,1\}$, Jiang--Li established some estimates on the Poisson semigroup such as $(PUB)$ and
$$
\begin{cases}
\dis \left|t\partial_t e^{-t\sqrt{\LV}}1(x)\right| \le C\lf(\frac{t}{\rho(x)}\r)^{\dz/2}\lf(1+\frac{t}{\rho(x)}\r)^{-N},\\[8pt]
\dis \lf(\int_0^{r_B}\fint_{B} \left|t\nabla_x e^{-t\sqrt{\LV}}1 \right|^2\d \mu\frac{\d t}{t}\r)^{1/2}\le C\min\lf\{\lf(\frac{r_B}{\rho(x_B)}\r)^{\dz/2},1\r\},
\end{cases}
$$
for some $0<\dz<\min\{1,2-Q/q\}$ and any $N>0$; see Lemma \ref{lem-1}.
In this article,
we can follow the argument in the proof of \cite[Theorem 5.3]{JL2022} verbatim to derive
$$
\|u\|_{\HMO_\LV}=\| e^{-t\sqrt{\LV}} f\|_{\HMO_\LV}=\sup_{B} \lf(\int_0^{r_B}\fint_B  \left|t\nabla_{x,t}  e^{-t\sqrt{\LV}}f\right|^2\d \mu\frac{\d t}{t}\r)^{1/2}\le C\|f\|_{\BMO_\LV}.
$$

\smallskip

{\bf Step 2:} from HMO to BMO.
In this step, we show that, under the assumption $q>\max\{Q/2,1\}$,
every ${\HMO_\LV}$-function on $X\times \rr$ can be represented as the Poisson integral with the trace in $\BMO_\LV(X)$.
Since it is intended solely as a brief review and not as a rigorous development, the pertinent results are stated without proof.

First, we establish the time regularity
$$|t\partial_t u(x,t)|\le C\|u\|_{\BMO_\LV}$$
and the weighted $L^2$-integrability
$$\int_X\frac{|u(y,s)|^2}{(t+d(x,y))\mu(B(x,t+d(x,y)))}\d\mu(y)
\le C\left[(1+t^{-1})\|u(\cdot,s)\|^2_{L^\fz(B(x,2))}+(1+\log^2 s )\|u\|^2_{\HMO_\LV} \right],$$
which indicates the Poisson integral $e^{-t\sqrt{\LV}}(u(\cdot,s))$ is well-defined.
Next, with the help of the H\"{o}lder continuity (Theorem \ref{holder1}) and the Liouville property (Theorem \ref{Liouville} or Theorem  \ref{Liouville1}),
we prove the identity
$$u(x,t+s)=e^{-t\sqrt{\LV}}(u(\cdot,s))(x).$$
This identity and the theory of tent space yield the uniform norm estimate
$$\sup_{s>0}\|u(\cdot,s)\|_{\BMO_\LV}\le C\|u\|_{\HMO_\LV}.$$
Finally, the Banach-Alaoglu theorem tells us that there exists a $\BMO_\LV$-function $f$ such that
$$\|f\|_{\BMO_\LV}\le C\|u\|_{\HMO_\LV},$$
and the Hardy-BMO duality theorem of Fefferman-Stein further implies the identity
$$u(x,t)=e^{-t\sqrt{\LV}}f(x)+h(x)$$
for some function $h(x)$ on $X$.
This function $h$ can be verified as vanish by the Liouville property from \cite[Proposition 4.4]{JL2022}.

Based on the above argument, we derive the desired result.
This completes the proof.
\end{proof}

\section{Application II: harmonic function with CMO trace}\label{s5}
\subsection{Statement of the main result}
\hskip\parindent
To state the main result of this section, we set a fixed reference point $x_0$ in $X$, and introduce some function classes.
%

\begin{definition}
An $\mathbb{L}$-harmonic function $u$ defined on $X\times \rr_+$
is said to be in $\mathrm{HCMO}_\LV=\mathrm{HCMO}_\LV(X\times \rr_+)$,
the {space of functions of harmonic vanishing mean oscillation},
if $u$ is in $\HMO_\LV(X\times \rr_+)$ and satisfies three limiting conditions
$$\bz_1(u)=\bz_2(u)=\bz_3(u)=0,$$
where
$$
\begin{cases}
\dis \bz_1(u)=\lim_{a\to0}\sup_{B:r_B\le a} \lf(\int_0^{r_B}\fint_B  |t\nabla_{x,t} u|^2\d \mu\frac{\d t}{t}\r)^{1/2}; \\[8pt]
\dis \bz_2(u)=\lim_{a\to\fz}\sup_{B:r_B\ge a} \lf(\int_0^{r_B}\fint_B  |t\nabla_{x,t} u|^2\d \mu\frac{\d t}{t}\r)^{1/2}; \\[8pt]
\dis \bz_3(u)=\lim_{a\to\fz}\sup_{B:B\subset B(x_0,a)^\complement} \lf(\int_0^{r_B}\fint_B  |t\nabla_{x,t} u|^2\d \mu\frac{\d t}{t}\r)^{1/2}.
\end{cases}
$$
We endow $\mathrm{HCMO}_\LV(X\times \rr_+)$ with the norm of $\mathrm{HMO}_\LV(X\times \rr_+)$.
\end{definition}


\begin{definition}
A locally integrable function $f$ defined on $X$ is said to be in $\mathrm{CMO}_\LV=\mathrm{CMO}_\LV(X)$,
the {space of functions of vanishing mean oscillation}, if $f$ is in $\BMO_\LV(X)$ and satisfies four limiting conditions
$$\gz_1(f)=\gz_2(f)=\gz_3(f)=\gz_4(f)=0,$$
where
$$
\begin{cases}
&\dis  \gz_1(f)=\lim_{a\to0}\sup_{\substack{B:r_B< \rho(x_B)\\ r_B< a}} \lf(\fint_B|f-f_B|^2\d \mu\r)^{1/2}; \\
&\dis \gz_2(f)=\lim_{a\to\fz}\sup_{\substack{B:r_B< \rho(x_B)\\ B\subset B(x_0,a)^\complement}}\lf(\fint_B|f-f_B|^2\d \mu\r)^{1/2};\\
&\dis \gz_3(f)=\lim_{a\to\fz}\sup_{\substack{B:r_B\ge \rho(x_B)\\r_B\ge a}}\lf(\fint_B|f|^2\d \mu\r)^{1/2}; \\
&\dis \gz_4(f)=\lim_{a\to\fz}\sup_{\substack{B: r_B\ge \rho(x_B)\\B\subset B(x_0,a)^\complement}}\lf(\fint_B|f|^2\d \mu\r)^{1/2}.
\end{cases}
$$
We endow $\mathrm{CMO}_\LV(X)$ with the norm of $\mathrm{BMO}_\LV(X)$.
\end{definition}
We remark that, Song--Wu \cite{SW2022} equip three independent limiting conditions
for the $\mathrm{BMO}_\LV$-function with $X=\mathbb R^n$,
and our $\mathrm{CMO}_\LV$-function enjoys four limiting conditions.
However, this does not affect the definition of a $\mathrm{CMO}_\LV$-function.
For more details about the $\mathrm{CMO}_\LV$-function, we refer the interested reader to Remarks 7.1--7.3 of \cite{LI}.

The following theorem is the main result of this section.

\begin{theorem}\label{thm2}
Let $(X,d,\mu,\mathscr{E})$ be a complete Dirichlet metric measure space satisfying a doubling property $(D)$ and admitting an $L^2$-Poincar\'{e} inequality $(P_2)$.
Suppose that $0\le V\in A_\infty(X)\cap RH_{q}(X)$ for some $q> \max\{Q/2,1\}$.
Then $u\in \mathrm{HCMO}_{\LV}(X\times \rr_+)$ if and only if $u=e^{-t\sqrt{\LV}}f$ for some $f\in \CMO_\LV(X)$.
Moreover, it holds that 
$$\|u\|_{\mathrm{HMO}_{\LV}} \approx \|f\|_{\BMO_\LV}.$$
%
%
%
%
%
 \end{theorem}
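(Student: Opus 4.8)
The plan is to leverage Theorem \ref{thm1}. Since $\HCMO_\LV(X\times\rr_+)\subset\HMO_\LV(X\times\rr_+)$ and $\CMO_\LV(X)\subset\BMO_\LV(X)$, every candidate $u\in\HCMO_\LV$ is automatically of the form $u=e^{-t\sqrt{\LV}}f$ with $f\in\BMO_\LV$ and $\|u\|_{\HMO_\LV}\approx\|f\|_{\BMO_\LV}$, and conversely $e^{-t\sqrt{\LV}}f$ with $f\in\CMO_\LV$ already lies in $\HMO_\LV$ with comparable norm; in particular the quantitative equivalence in the statement is inherited verbatim from Theorem \ref{thm1}. Thus the theorem reduces to showing that, for such a pair $(u,f)$, one has $\bz_1(u)=\bz_2(u)=\bz_3(u)=0$ if and only if $\gz_1(f)=\gz_2(f)=\gz_3(f)=\gz_4(f)=0$. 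The natural correspondence is $\bz_1\leftrightarrow\gz_1$ (balls with $r_B\to0$), $\bz_2\leftrightarrow\gz_3$ (balls with $r_B\to\fz$, on which $r_B\ge\rho(x_B)$ so that the averaging term in $\BMO_\LV$ drops out), and $\bz_3\leftrightarrow\{\gz_2,\gz_4\}$ (balls whose centre escapes to infinity, split according to $r_B<\rho(x_B)$ or $r_B\ge\rho(x_B)$); the mismatch between three and four conditions is only this last splitting and is immaterial. Hence it suffices to establish a pair of two-sided estimates relating, for each ball $B$, the Carleson quantity $\mathcal C_{u,B}$ of \eqref{eqn:HMO-LV} to the $\BMO_\LV$-type oscillation of $f$ on a fixed dilate of $B$, modulo error terms controlled by $\mathcal C_{u,B'}$ on dilates and by boundary slices of $u$.

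For the implication $\CMO_\LV\Rightarrow\HCMO_\LV$ I would argue by approximation. By Theorem \ref{thm1} the map $f\mapsto u=e^{-t\sqrt{\LV}}f$ is an isomorphism $\BMO_\LV\to\HMO_\LV$, while each $\bz_i$ is subadditive and $1$-Lipschitz with respect to $\|\cdot\|_{\HMO_\LV}$, so the set of $u$ with $\bz_1(u)=\bz_2(u)=\bz_3(u)=0$ is closed in $\HMO_\LV$. It then suffices to exhibit a $\BMO_\LV$-dense subclass of $\CMO_\LV$ for whose members the three limits are transparent; compactly supported, $\rho$-localised Lipschitz functions are the natural candidates, for which the vanishing of $\bz_1$, $\bz_2$, $\bz_3$ follows from $(PUB)$, the perturbation bound of Lemma \ref{lem-1}(iii) and the semigroup-derivative bounds of Lemma \ref{lem-1}(v), after splitting the tent in $\mathcal C_{u,B}$ into near and far parts relative to $B$. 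Alternatively one can bypass density and prove directly the one-sided bounds $\bz_i(u)\ls$ the corresponding $\gz_j(f)$ by the same kernel estimates.

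The substantive direction is $\HCMO_\LV\Rightarrow\CMO_\LV$, and here the tool is a Calder\'on-type reproducing formula for the Poisson semigroup of $\LV$, of the schematic shape
\[
f-(\text{an admissible mean})\;=\;c\int_0^\fz \Psi_t\big(t\partial_t e^{-t\sqrt{\LV}}f\big)\,\frac{\d t}{t},
\]
with $\Psi_t$ a family of integral operators enjoying uniform $L^2$ bounds and fast spatial decay (to be extracted from $(LY)$, $(PUB)$ and $(P_2)$). Substituting $t\partial_t e^{-t\sqrt{\LV}}f=t\partial_t u(\cdot,t)$ and splitting the $t$-integral at $t=r_B$, one bounds $\big(\fint_B|f-f_B|^2\,\d\mu\big)^{1/2}$ — and, when $r_B\ge\rho(x_B)$, $\big(\fint_B|f|^2\,\d\mu\big)^{1/2}$ — by $\mathcal C_{u,\lambda B}$ for a fixed dilation $\lambda$ (the low-height part, which is exactly a Carleson quantity) plus a remainder coming from heights $t\gtrsim r_B$. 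For the remainder I would invoke the identity $u(x,t+r_B)=e^{-t\sqrt{\LV}}(u(\cdot,r_B))(x)$ — legitimate by the H\"older continuity of Theorem \ref{holder1} together with the Liouville theorem (Theorem \ref{Liouville}, or Theorem \ref{Liouville1} on the half-space) — and then use the pointwise bound of Lemma \ref{mean}, the $L^2$-Poincar\'e inequality $(P_2)$ and $(PUB)$ to absorb it into $\mathcal C_{u,\lambda B}$ plus a further term decaying in the relevant limit. Tracking the three regimes of $B$ against $\bz_1,\bz_2,\bz_3$ then delivers $\gz_1(f)=\gz_2(f)=\gz_3(f)=\gz_4(f)=0$. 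For $X=\rn$ this is the route of Song--Wu \cite{SW2022}, but their reliance on $H^1_\LV$--$\BMO_\LV$ duality and molecular decompositions must here be replaced by the reproducing formula and $(P_2)$.

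The hardest point, I expect, is the uniform control of the reproducing-formula remainder over all balls, and in particular its interaction with the critical function in the ``centre escaping to infinity'' regime: $\rho$ is not pointwise comparable but only obeys the two-sided H\"older bound \eqref{critical}, so the threshold $r_B\lessgtr\rho(x_B)$ drifts across the space, and one must verify that the remainder still lands in the correct vanishing class $\gz_2$ or $\gz_4$. A secondary technical obstacle is establishing the uniform $L^2$ bounds and the spatial decay of the operators $\Psi_t$ in a bare metric measure space, using only the Gaussian heat-kernel bounds $(LY)$, the Poisson bounds $(PUB)$ and the Poincar\'e inequality $(P_2)$; this is precisely what lets the argument run outside the Euclidean, operator–function-space framework.
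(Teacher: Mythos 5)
Your reduction to Theorem~\ref{thm1} and the observation that each $\bz_i$ is $1$-Lipschitz for $\|\cdot\|_{\HMO_\LV}$ (hence $\{\bz_1=\bz_2=\bz_3=0\}$ is $\HMO_\LV$-closed) are both correct, and your strategy for the direction $\HCMO_\LV\Rightarrow\CMO_\LV$ --- Calder\'on reproducing formula, splitting of the $t$-integral at $t\approx r_B$, the identity $u(\cdot,t+r_B)=e^{-t\sqrt{\LV}}u(\cdot,r_B)$ obtained from the H\"older/Liouville machinery, $(P_2)$ and $(PUB)$ --- is the same machinery the paper uses (the paper runs it through an $L^2$-duality pairing and the reproducing formula applied to the test function $g$ rather than to $f$ itself, but the substance is the same).

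The gap is in your main route for $\CMO_\LV\Rightarrow\HCMO_\LV$. The density argument is logically sound once one has it, but it presupposes a characterisation of $\CMO_\LV(X)$ as the $\BMO_\LV$-closure of a nice class (your ``compactly supported, $\rho$-localised Lipschitz functions''). That characterisation is nontrivial: the paper's $\CMO_\LV$ is \emph{defined} by four vanishing conditions $\gz_1=\gz_2=\gz_3=\gz_4=0$, not as a closure, and proving the two definitions agree is precisely the ``two new characterizations of $\mathrm{CMO}_L$'' that Song--Wu established on $\rn$ and that the introduction explicitly declines to reproduce on general $X$ (``more complicated due to the underlying geometric and metric structures''). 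Citing that density here would therefore import exactly the function-space-theoretic machinery the paper is engineered to avoid. Your alternative remark --- ``prove directly the one-sided bounds $\bz_i(u)\ls\gz_j(f)$'' --- does not close the gap as stated, because after the natural split $f=(f-f_B)+f_B$ the constant term $f_B$ contributes a nonzero $t\nabla_x e^{-t\sqrt{\LV}}f_B=f_B\cdot t\nabla_x e^{-t\sqrt{\LV}}1$ to the Carleson quantity which is invisible to the oscillation parts of $\gz_j$. What the paper actually does is exactly this direct decomposition, but combined with a genuine extra ingredient that you do not mention: the pointwise bound (from \cite{LI}, used in Step 3 of Theorem~\ref{cmo})
\[
|f_B|\le C\left(\frac{\rho(x_B)}{r_B}\right)^{\dz/4}\|f\|_{\BMO_\LV}^{1-\varepsilon}\,|f|^{\varepsilon}_{B(x_B,\rho(x_B))},\qquad r_B<\rho(x_B),
\]
which, paired with the decay of $t\nabla_x e^{-t\sqrt{\LV}}1$ in Lemma~\ref{lem-1}(v) and the cases $r_B\lessgtr\rho(x_B)$, forces the constant-term Carleson quantity to vanish in each of the three regimes. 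Without this estimate (or without the density result it substitutes for) the constant term is not controlled and the implication does not go through.
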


\begin{remark}
In Theorems \ref{thm1} and \ref{thm2}, we always assume that the non-negative potential $V$ is non-trivial.
However, it is worth mentioning that Theorem \ref{thm3} holds for $V=0$
since the H\"{o}lder continuity and the Liouville property are valid automatically.
In fact, the $V=0$ version of Theorems \ref{thm1} and \ref{thm2}) have been studied in \cite{JLS},
whose proof is far away from the argument \cite{DYZ2014,JL2022} due to different natures of Schr\"{o}dinger operator $\LV$ and Laplace operator $\L$.
See \cite{LLSY2024} for more results about Morrey-Dirichlet problem for the elliptic equation.
\end{remark}

To the best of our knowledge, Theorem \ref{thm2} is new even for $V=1$ on $X$.
In the case of the Euclidean space $\rn$ with $n\ge2$,
Theorem \ref{thm2} is also new for the uniformly elliptic operator $\L=-\mathrm{div} A\nabla$,
where $A=A(x)$ is an $n\times n$ matrix of real, symmetric, bounded measurable coefficients, defined on $\rn$, and satisfies the ellipticity condition, i,e.,
there exist constants $0<\lz\le\Lambda<\fz$ such that for all $\xi\in \rn$,
$$\lz |\xi|^2\le \langle A \xi, \xi\rangle\le \Lambda|\xi|^2.$$

%
%

\begin{corollary}
Let $\L=-\mathrm{div} A\nabla$ be an uniformly elliptic operator on $\rn$ with $n\ge2$.
Suppose that $0\le V\in RH_{q}(\rn)$ for some $q> n/2$.
Then $u\in \HCMO_\LV(\rr^{n+1}_+)$ if and only if $u=e^{-t\sqrt{\LV}}f$ for some $f\in \CMO_\LV(\rr^n)$.
Moreover, it holds that
$$
 \|u\|_{\mathrm{HMO}_\LV} \approx\|f\|_{\BMO_\LV}.
$$
\end{corollary}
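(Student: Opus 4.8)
The plan is to obtain this Corollary as an immediate specialisation of Theorem~\ref{thm2}. I would show that $(\rn,|\cdot|,\d x)$ together with the Dirichlet form $\mathscr{E}(f,g)=\int_{\rn}\langle A\nabla f,\nabla g\rangle\,\d x$ is a complete Dirichlet metric measure space of the kind required there, with upper and lower dimension both equal to $n$, and that $0\le V\in RH_q(\rn)$ for $q>n/2$ forces $0\le V\in A_\infty(\rn)\cap RH_q(\rn)$ with $q>\max\{Q/2,1\}$. First I would record the structural facts: $\rn$ is separable, connected, locally compact and metrisable; Lebesgue measure is doubling and reverse doubling with exponent $n$, so the standing hypothesis $n>1$ holds since $n\ge2$; and $\mathscr{E}$ is strongly local, closed, regular, and admits a carr\'e du champ with energy density $\langle\nabla_x f,\nabla_x g\rangle=\langle A\nabla f,\nabla g\rangle$.

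Next I would check that the intrinsic distance $d$ attached to $\mathscr{E}$ is comparable to the Euclidean one. The ellipticity bounds $\lz|\xi|^2\le\langle A\xi,\xi\rangle\le\Lambda|\xi|^2$ give $|\nabla_x f|\le1\iff\langle A\nabla f,\nabla f\rangle\le1$, hence $|\nabla f|\le\lz^{-1/2}$ and $d(x,y)\le\lz^{-1/2}|x-y|$; testing the definition against the affine map $z\mapsto\Lambda^{-1/2}\langle z-y,(x-y)/|x-y|\rangle$ yields $d(x,y)\ge\Lambda^{-1/2}|x-y|$. Thus $d$ is a genuine distance inducing the Euclidean topology, and $(\rn,d,\d x,\mathscr{E})$ is a complete Dirichlet metric measure space. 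The classical $L^2$-Poincar\'e inequality on Euclidean balls, combined once more with $|\nabla f|^2\le\lz^{-1}\langle A\nabla f,\nabla f\rangle$, gives $(P_2)$ for the energy density with a constant depending only on $n$ and $\lz$.

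Finally, recalling that on $\rn$ one has $RH_q(\rn)\subset A_\infty(\rn)$ (see \cite{ST1989}), the hypothesis of the Corollary places $V$ in $A_\infty(\rn)\cap RH_q(\rn)$; since $Q=n\ge2$ one has $\max\{Q/2,1\}=n/2$, so all assumptions of Theorem~\ref{thm2} are satisfied by $\LV=\L+V$ on $(\rn,d,\d x,\mathscr{E})$. Invoking that theorem then delivers at once the claimed equivalence $u\in\HCMO_\LV(\rr^{n+1}_+)\iff u=e^{-t\sqrt{\LV}}f$ with $f\in\CMO_\LV(\rn)$, together with $\|u\|_{\HMO_\LV}\approx\|f\|_{\BMO_\LV}$. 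The only step requiring actual work is the bi-Lipschitz comparison of $d$ with $|\cdot|$ and the attendant transfer of the Poincar\'e inequality through the ellipticity constants; everything past that is a verbatim application of Theorem~\ref{thm2}.
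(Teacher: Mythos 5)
Your proof is correct and matches the paper's intent: the paper states the corollary as an immediate specialization of Theorem~\ref{thm2} to $X=\rn$ with $\L=-\mathrm{div}A\nabla$, noting only that $RH_q(\rn)\subset A_\fz(\rn)$ (so $V\in A_\fz(\rn)\cap RH_q(\rn)$) and that $Q=n\ge2$ gives $\max\{Q/2,1\}=n/2$. Your bi-Lipschitz comparison of the intrinsic metric with $|\cdot|$ and the transfer of $(P_2)$ through the ellipticity constants are the standard verifications the paper leaves implicit.
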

Note that on $\rn$, the reverse H\"{o}lder property of $V\in RH_q(\rn)$ implies that $V\in A_\fz(\rn)$.
Therefore this corollary extends (in a sense) the result of Song-Wu \cite{SW2022}
from the Laplace operator $-\Delta$ to the uniformly elliptic operator $\L=-\mathrm{div} A\nabla$,
improve the reverse H\"{o}lder order $q$ from $(n+1)/2$ to $n/2$,
and relax the dimension of Euclidean space $\rn$ from $n\ge3$ to $n\ge2$.

Our  result  also applies to the elliptic operator $L=-\oz^{-1}\mathrm{div} A\nabla$,
where $\oz$ is a Muckenhoupt $A_2$-weight and $A=A(x)$ is an $n\times n$ matrix of real symmetric, bounded measurable coefficients satisfying the degenerate ellipticity condition,
i.e.,  there exist constants $0<\lz\le\Lambda<\fz$ such that
$$\lz \oz(x) |\xi|^2\le \langle A(x)\xi, \xi\rangle\le \Lambda \oz(x)|\xi|^2. $$ 
The doubling property is a consequence of the $A_2$-weight,
and it was known from \cite{FKS82} that an $L^2$-Poincar\'e inequality holds.
Moreover, as we do not need any curvature condition,  our result can be applied to
different settings, which include Riemannian metric measure
spaces, sub-Riemannian manifolds;
see \cite[Introduction]{ACDH2004}, \cite[Section 6]{BDL2018}, \cite[Section 7]{CJKS2020} and \cite[Section 5]{YZ2011} for more examples.

\subsection{From solution to trace}
\hskip\parindent
In this subsection we seek the CMO trace of an HCMO function with desired norm controlled.
\begin{theorem}\label{hcmo}
Let $(X,d,\mu,\mathscr{E})$ be a complete Dirichlet metric measure space satisfying a doubling property $(D)$ and admitting an $L^2$-Poincar\'{e} inequality $(P_2)$.
Suppose that $0\le V\in A_\infty(X)\cap RH_{q}(X)$ for some $q> \max\{Q/2,1\}$.
If $u\in \mathrm{HCMO}_{\LV}(X\times \rr_+)$, then $u=e^{-t\sqrt{\LV}}f$ for some $f\in \CMO_\LV(X)$.
Moreover, it holds that 
$$\|f\|_{\BMO_\LV}\le C\|u\|_{\mathrm{HMO}_{\LV}}.$$
 \end{theorem}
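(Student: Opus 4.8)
The plan is to reduce the statement to the already‑established Theorem \ref{thm1} together with a vanishing‑Carleson‑measure characterization of $\CMO_\LV$ in the spirit of Song--Wu \cite{SW2022}. Since $\mathrm{HCMO}_{\LV}(X\times\rr_+)\subset\mathrm{HMO}_{\LV}(X\times\rr_+)$, Theorem \ref{thm1} immediately produces an $f\in\BMO_\LV(X)$ with $u=e^{-t\sqrt{\LV}}f$ and $\|f\|_{\BMO_\LV}\le C\|u\|_{\mathrm{HMO}_{\LV}}$, where $f$ is the $t\to0^+$ limit of $u(\cdot,t)$ as recorded in that proof (via the reproduction identity $u(\cdot,t+s)=e^{-t\sqrt{\LV}}(u(\cdot,s))$). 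Thus the claimed norm inequality is immediate, and the only remaining task is to upgrade $f\in\BMO_\LV(X)$ to $f\in\CMO_\LV(X)$, i.e.\ to deduce the four limiting conditions $\gz_1(f)=\gz_2(f)=\gz_3(f)=\gz_4(f)=0$ from the three limiting conditions $\bz_1(u)=\bz_2(u)=\bz_3(u)=0$ defining $\mathrm{HCMO}_{\LV}$.

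The decisive observation is that, with $u=e^{-t\sqrt{\LV}}f$, the measure $\d\nu=|t\nabla u|^2\,\d\mu\,\frac{\d t}{t}$ is exactly $|t\nabla e^{-t\sqrt{\LV}}f|^2\,\d\mu\,\frac{\d t}{t}$, and the three conditions $\bz_1(u)=\bz_2(u)=\bz_3(u)=0$ say precisely that $\nu$ is a \emph{vanishing} $\LV$-Carleson measure in the three senses (over small boxes, over large boxes, and over boxes receding to spatial infinity). The plan is therefore to prove --- this is the main step, and the metric‑space analogue of one of the ``new characterizations of $\CMO$'' in \cite{SW2022} --- that for $f\in\BMO_\LV(X)$ one has $f\in\CMO_\LV(X)$ if and only if $|t\nabla e^{-t\sqrt{\LV}}f|^2\,\d\mu\,\frac{\d t}{t}$ is a vanishing $\LV$-Carleson measure in these three senses; granting this, the hypothesis $u\in\mathrm{HCMO}_{\LV}(X\times\rr_+)$ gives $f\in\CMO_\LV(X)$ at once.

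To establish the ``if'' direction of that characterization (the one we need), I would argue locally. For a ball $B=B(x_B,r_B)\subset X$ with $r_B<\rho(x_B)$, write $f-u(\cdot,r_B)=-\int_0^{r_B}\partial_t u(\cdot,t)\,\d t$ and, combined with a Calder\'on‑type reproducing formula adapted to $\sqrt{\LV}$ and the tent‑space machinery already invoked in the proof of Theorem \ref{thm1}, estimate $\fint_B|f-u(\cdot,r_B)|^2$ by a Carleson‑box quantity of $u$ over a controlled dilate of $B$; meanwhile $\fint_B|u(\cdot,r_B)-(u(\cdot,r_B))_B|^2$ is bounded by $r_B^2\fint_B|\nabla_x u(\cdot,r_B)|^2$ through the $L^2$‑Poincar\'e inequality, hence again by a Carleson‑box quantity of $u$ after an interior/Caccioppoli estimate for $\mathbb{L}$‑harmonic functions on $X\times\rr_+$ that converts $\nabla_x u$ on the slice $B\times\{r_B\}$ into an $L^2$‑average of $t\nabla u$. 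For $r_B\ge\rho(x_B)$ one runs the same scheme for $\fint_B|f|^2$, now also using the exponential decay of $e^{-t\sqrt{\LV}}1$ and of the Poisson kernel from Lemma \ref{lem-1}(i),(v) to control the non‑cancellative part. Letting the ball vary and matching the small/large/far regimes of these Carleson‑box quantities against $\bz_1(u)=0$, $\bz_2(u)=0$, $\bz_3(u)=0$ respectively (the ``oscillation'' and ``non‑cancellative'' regimes in the definition of $\CMO_\LV$ being distinguished by $r_B\lessgtr\rho(x_B)$) then yields $\gz_1(f)=\gz_2(f)=\gz_3(f)=\gz_4(f)=0$.

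I expect the main obstacle to be precisely this characterization of $\CMO_\LV$ in the metric measure space setting: lacking the Fourier transform or explicit kernels, one must build a Calder\'on‑type reproducing formula adapted to $\sqrt{\LV}$ (equivalently to the heat semigroup $e^{-t\LV}$) that reconstructs $f$ and its mean oscillation from $t\nabla u$ on Carleson boxes, track the auxiliary function $\rho$ uniformly in $B$ so as to respect the scale dichotomy $r_B\lessgtr\rho(x_B)$ built into $\BMO_\LV$ and $\CMO_\LV$, and pass from the time derivative $\partial_t u$ to the full gradient $\nabla_{x,t}u$ occurring in the Carleson norm via interior estimates for $\mathbb{L}$ on $X\times\rr_+$ --- all with constants independent of $B$ and uniform in the limiting parameter, so that the limits actually vanish. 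Once this machinery and the matching between the three defects of $u$ and the four defects of $f$ are in place, the remaining steps follow \cite{SW2022}.
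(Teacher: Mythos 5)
Your proposal matches the paper's proof essentially step for step: reduce to Theorem~\ref{thm1} to obtain $f\in\BMO_\LV(X)$ with the norm bound, then bound $\fint_B|f-f_B|^2$ (for $r_B<\rho(x_B)$) and $\fint_B|f|^2$ (for $r_B\ge\rho(x_B)$) by weighted sums of Carleson-box quantities of $u$ over dilates $2^kB$ using the $L^2$-Poincar\'e inequality together with an $L^2$-duality/Calder\'on reproducing formula for $\sqrt{\LV}$, and finally pass to the limits to convert $\bz_1(u)=\bz_2(u)=\bz_3(u)=0$ into $\gz_1(f)=\cdots=\gz_4(f)=0$. The only cosmetic difference is in handling the slice term $\fint_B\fint_B|u(x,r_B)-u(y,r_B)|^2$: you propose a Caccioppoli interior estimate to relate $\nabla_x u$ on the slice $\{t=r_B\}$ to a Carleson-box average, whereas the paper avoids Caccioppoli by averaging the time parameter over $(r_B/2,r_B)$ and then applying Poincar\'e plus the fundamental theorem of calculus directly; both variants are routine.
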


\begin{proof}
{\bf Step 1:} reduction.
If $u\in\mathrm{HCMO}_\LV(X\times\rr_+)$, then $u\in\mathrm{HMO}_\LV(X\times\rr_+)$.
By Theorem \ref{thm1}, there exists a function $f\in \mathrm{BMO}_\LV(X)$ such that $u=e^{-t\sqrt{\LV}}f$ with
$$\|f\|_{\mathrm{BMO}_\LV} \le C \|u\|_{\mathrm{HMO}_\LV} .$$
It remains to verity $f$ satisfies four limiting conditions
$$\gz_1(f)=\gz_2(f)=\gz_3(f)=\gz_4(f)=0.$$

{\bf Step 2:} control the mean oscillation of $f$.
For any ball $B=B(x_B,r_B)\subset X$  with $r_B<\rho(x_B)$, it holds that
\begin{align*}
\lf(\fint_B|f-f_B|^2\d\mu\r)^{1/2}
& \le \lf(\fint_{B}\fint_{B}|f(x)-f(y)|^2\d\mu(x)\d\mu(y)\r)^{1/2} \\
& \le 2\lf(\fint_{B}\left|f-e^{-r_B\sqrt{\LV}}f\right|^2\d\mu\r)^{1/2}+\lf(\fint_{B}\fint_{B}|u(x,r_B)-u(y,r_B)|^2\d\mu(x)\d\mu(y)\r)^{1/2}.
\end{align*}

Let us estimate the second term first.
One may employ $(P_2)$ to obtain
\begin{align*}
\lf(\fint_{B}\fint_{B}|u(x,r_B)-u(y,r_B)|^2\d\mu(x)\d\mu(y)\r)^{1/2}
&\le \lf(\fint_{r_B/2}^{r_B}\fint_{B}\fint_{B}|u(x,t)-u(y,t)|^2\d\mu(x)\d\mu(y)\d t\r)^{1/2} \\
&\ + 2\lf(\fint_{r_B/2}^{r_B}\fint_{B}|u(x,r_B)-u(x,t)|^2\d\mu(x)\d t\r)^{1/2}
\le C\lf(\int_0^{r_B}\fint_{B} \left|t \nabla_{x,t} u\right|^2\d\mu\frac{\d t}{t}\r)^{1/2}.
\end{align*}

For the first term, the $L^2$-duality argument and the Calder\'{o}n reproducing formula (see \cite{DY2005-1,DY2005-2,DYZ2014,JLS} for example) tell us that
\begin{align*}
\lf(\fint_{B} \left|f-e^{-r_B\sqrt{\LV}}f\right|^2\d\mu\r)^{1/2}
&=[\mu(B)]^{-1/2}\sup_{\|g\|_{L^2(B)}\le 1}\lf|\int_X f(I-e^{-r_B\sqrt{\LV}})g \d\mu\r| \\
&=[\mu(B)]^{-1/2}\sup_{\|g\|_{L^2(B)}\le 1}\lf|\int_0^\fz\int_Xt\partial_t e^{-t\sqrt{\LV}}f t\partial_t e^{-t\sqrt{\LV}}(I-e^{-r_B\sqrt{\LV}})g\d \mu\frac{\d t}{t}\r|.
\end{align*}
Moreover, the Step 1 in \cite[page 215]{JLS} tells us that the integral in the RHS of the identity above can be controlled by
$$
\lf|\int_0^\fz\int_Xt\partial_t e^{-t\sqrt{\LV}}f t\partial_t e^{-t\sqrt{\LV}}(I-e^{-r_B\sqrt{\LV}})g\d \mu\frac{\d t}{t}\r|
\le  C[\mu(B)]^{1/2}\sum_{k=1}^\fz k2^{-k}\lf(\int_0^{2^{k}r_B}\fint_{{2^{k}B}}|t\partial_tu|^2\d \mu\frac{\d t}{t}\r)^{1/2}.
$$
Therefore we derive
$$
\lf(\fint_{B}\left|f-e^{-r_B\sqrt{\LV}}f\right|^2\d\mu\r)^{1/2}
\le C\sum_{k=1}^\fz k2^{-k}\lf(\int_0^{2^{k}r_B}\fint_{{2^{k}B}}|t\partial_tu|^2\d \mu\frac{\d t}{t}\r)^{1/2}.
$$

Combining two estimates on the first term and the second term leads to
$$
\lf(\fint_{B}|f-f_{B}|^2\d\mu\r)^{1/2}
 \le C\sum_{k=0}^\fz k2^{-k}\lf(\int_0^{2^{k}r_B}\fint_{{2^{k}B}}|t\nabla_{x,t} u|^2\d \mu\frac{\d t}{t}\r)^{1/2}.
$$

\smallskip

{\bf Step 3:} verify $\gz_1(f)=\gz_2(f)=0$.
The proof of $\gz_1(f)=\gz_2(f)=0$ is similar to that of Step 1 in \cite[Theorem 4.1]{JLS}.
We leave the detail to the interested reader.

\smallskip

{\bf Step 4:} control the mean of $f$.
For any ball $B=B(x_B,r_B)\subset X$ with $r_B\ge \rho(x_B)$,
it holds by the $L^2$-duality argument and the Calder\'{o}n reproducing formula (see \cite{SL2022} for example) that
$$
\lf(\fint_{B}|f|^2\d\mu\r)^{1/2}
=[\mu(B)]^{-1/2}\sup_{\|g\|_{L^2(B)}\le 1}\lf|\int_X fg \d\mu\r|
=[\mu(B)]^{-1/2}\sup_{\|g\|_{L^2(B)}\le 1}\lf|\int_0^\fz\int_Xt\partial_te^{-t\sqrt{\LV}}f t\partial_t e^{-t\sqrt{\LV}}g\d \mu\frac{\d t}{t}\r|.
$$
One may invoke $(PUB)$ to obtain
$$
\lf|\int_0^\fz\int_Xt\partial_te^{-t\sqrt{\LV}}f t\partial_t e^{-t\sqrt{\LV}}g\d \mu\frac{\d t}{t}\r|
\le  C[\mu(B)]^{1/2}\sum_{k=1}^\fz 2^{-k}\lf(\int_0^{2^{k}r_B}\fint_{{2^{k}B}}|t\partial_tu|^2\d \mu\frac{\d t}{t}\r)^{1/2};
$$
see the proof of \cite[Lemma 5]{SL2022} for example.
Therefore  we have
$$
\lf(\fint_{B}|f|^2\d\mu\r)^{1/2}
 \le C\sum_{k=2}^\fz 2^{-k}\lf(\int_0^{2^{k}r_B}\fint_{{2^{k}B}}|t\partial_t u|^2\d \mu\frac{\d t}{t}\r)^{1/2}.
$$

\smallskip

{\bf Step 5:} verify $\gz_3(f)=\gz_4(f)=0$.
The proof of $\gz_3(f)=\gz_4(f)=0$ is similar to that of Step 1 in \cite[Theorem 4.1]{JLS}.
We leave the detail to the interested reader.

\smallskip

{\bf Step 6:} completion of the proof.
From Steps 3 and 5, we know that
$$\gz_1(f)=\gz_2(f)=\gz_3(f)=\gz_4(f)=0,$$
which completes the proof.
%
\end{proof}

\subsection{From trace to solution}
\hskip\parindent
In this subsection we complete the proof of Theorem \ref{thm2} by showing that
every $\CMO_\LV$-function induces a vanishing Carleson measure $\big|t\nabla_{x,t}e^{-t\sqrt{\LV}}f\big|^2\d\mu\d t/t$.

\begin{theorem}\label{cmo}
Let $(X,d,\mu,\mathscr{E})$ be a complete Dirichlet metric measure space satisfying a doubling property $(D)$ and admitting an $L^2$-Poincar\'{e} inequality $(P_2)$.
Suppose that $0\le V\in A_\infty(X)\cap RH_{q}(X)$ for some $q> \max\{Q/2,1\}$.
If $f\in \CMO_\LV(X)$, then $u=e^{-t\sqrt{\LV}}f\in \HCMO_{\LV}(X\times \rr_+)$.
Moreover, it holds that 
$$\|u\|_{\HMO_{\LV}}\le C\|f\|_{\BMO_\LV}.$$
 \end{theorem}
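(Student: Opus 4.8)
The plan is the following. By Theorem \ref{thm1}, for $f\in\CMO_\LV(X)\subset\BMO_\LV(X)$ the Poisson integral $u=e^{-t\sqrt{\LV}}f$ already lies in $\HMO_\LV(X\times\rr_+)$ and satisfies $\|u\|_{\HMO_\LV}\le C\|f\|_{\BMO_\LV}$, so the only new point is to verify the three vanishing conditions $\bz_1(u)=\bz_2(u)=\bz_3(u)=0$. The first step is to record a quantitative Carleson estimate that is, in effect, ``dual'' to Step~2 of the proof of Theorem \ref{hcmo}: for every ball $B=B(x_B,r_B)\subset X$,
\begin{equation*}
\mathcal{C}_{u,B}\le C\sum_{k=0}^{\fz}k\,2^{-k\dz}\,\Theta_k(B)+\mathcal{R}(B),
\end{equation*}
where $\dz\in(0,1)$ is the exponent from Lemma \ref{lem-1}(v) (available precisely because $q>\max\{Q/2,1\}$, through the self-improvement of $RH_q$), $\Theta_k(B)=(\fint_{2^kB}|f-f_{2^kB}|^2\d\mu)^{1/2}$ when $2^kr_B<\rho(x_B)$ and $\Theta_k(B)=(\fint_{2^kB}|f|^2\d\mu)^{1/2}$ when $2^kr_B\ge\rho(x_B)$, and the remainder $\mathcal{R}(B)$ is present only in the regime $r_B<\rho(x_B)$, where it is $\ls\min\{(r_B/\rho(x_B))^{\dz},1\}\,|f_B|$. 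To get this one decomposes $f=(f-c_B)\chi_{2B}+(f-c_B)\chi_{(2B)^c}+c_B$ with $c_B=f_B$ if $r_B<\rho(x_B)$ and $c_B=0$ otherwise: the first piece is controlled by square-function bounds for $e^{-t\sqrt{\LV}}$ (giving $\mathcal{C}^2\ls\fint_{2B}|f-c_B|^2$), the second by the Poisson bound $(PUB)$ and the Caccioppoli inequality of Lemma \ref{lem-1}(iv) (to reduce $\nabla_{x,t}u$ to $t$-derivatives) together with the Poincar\'e inequality $(P_2)$, and the third by Lemma \ref{lem-1}(v). Since each $\Theta_k(B)\ls\|f\|_{\BMO_\LV}$ and $|f_B|\ls(1+\log(\rho(x_B)/r_B))\|f\|_{\BMO_\LV}+(\fint_{B(x_B,\rho(x_B))}|f|^2)^{1/2}$ by telescoping, the series converges absolutely and uniformly in $B$.

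Next I would deduce the three limits from this estimate. Fix $\ez>0$ and pick $K$ with $C\sum_{k>K}k2^{-k\dz}\|f\|_{\BMO_\LV}<\ez$; the tail of the series is then $<\ez$ uniformly over all $B$, so it suffices to show that for each fixed $k\le K$ the quantity $\Theta_k(B)$ (and, separately, $\mathcal{R}(B)$) tends to $0$ uniformly over the relevant family of balls. For $\bz_1(u)$ one restricts to $r_B\le a$, $a\to0$: if $2^kr_B<\rho(x_B)$ then $2^kB$ has radius $\le 2^Ka\to0$ and $\Theta_k(B)\to0$ by $\gz_1(f)=0$; if $2^kr_B\ge\rho(x_B)$ then $\rho(x_B)\le 2^Ka\to0$, so by \eqref{critical} the centre $x_B$ lies at distance $\to\infty$ from $x_0$, hence $2^kB\subset B(x_0,R_a)^c$ with $R_a\to\infty$ and $\Theta_k(B)\to0$ by $\gz_4(f)=0$; finally $\mathcal{R}(B)\ls(r_B/\rho(x_B))^{\dz}|f_B|$, and since $x^{\dz}\log(1/x)\to0$ together with $\gz_4(f)=0$ (when $\rho(x_B)$ is small, hence $x_B$ far from $x_0$) it also tends to $0$. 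For $\bz_2(u)$ one restricts to $r_B\ge a\to\infty$: if $2^kr_B<\rho(x_B)$ then $\rho(x_B)>2^kr_B\ge a\to\infty$, so by \eqref{critical} $2^kB$ is again contained in the complement of a ball about $x_0$ whose radius $\to\infty$ (because $\rho(x_B)=o(d(x_0,x_B))$), whence $\Theta_k(B)\to0$ by $\gz_2(f)=0$; if $2^kr_B\ge\rho(x_B)$ then $2^kB$ has radius $\ge a\to\infty$ and $\Theta_k(B)\to0$ by $\gz_3(f)=0$. For $\bz_3(u)$ one restricts to $B\subset B(x_0,a)^c$, $a\to\infty$: using $d(x_0,x_B)\ge a+r_B$ and splitting according to whether $r_B$ is comparable to $a$ or much larger, \eqref{critical} gives $2^kB\subset B(x_0,a/2)^c$ whenever $2^kr_B<\rho(x_B)$ (so $\Theta_k(B)\to0$ by $\gz_2(f)=0$), while when $2^kr_B\ge\rho(x_B)$ one has either $2^kB\subset B(x_0,a/2)^c$ or $2^kr_B\ge r_B\to\infty$, so $\Theta_k(B)\to0$ by $\gz_4(f)=0$ or $\gz_3(f)=0$. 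In every case $\limsup\mathcal{C}_{u,B}\le C\ez$, and letting $\ez\to0$ yields $\bz_i(u)=0$, which, combined with the norm bound from Theorem \ref{thm1}, proves the theorem.

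The main obstacle is the last paragraph rather than the estimate itself: the two-regime structure of $\BMO_\LV$/$\CMO_\LV$ (balls with $r_B<\rho(x_B)$ versus $r_B\ge\rho(x_B)$) forces a single dilation sequence $\{2^kB\}$ to straddle the threshold $r\approx\rho$, so all four conditions $\gz_1,\dots,\gz_4$ enter, each killing a different block of the sum; the delicate part is to show, uniformly over the relevant family and using only \eqref{critical}, that the dilates $2^kB$ genuinely land in a family to which some $\gz_i$ applies --- concretely, that ``$\rho$ small'' forces ``far from $x_0$'', that huge-radius dilates are eventually large balls, and that the logarithmic loss in $|f_B|$ is absorbed through $x^{\dz}\log(1/x)\to0$. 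I would also note an alternative soft route: if one knows that $\CMO_\LV(X)$ is the $\BMO_\LV$-closure of a class of nice functions $g$ for which $e^{-t\sqrt{\LV}}g\in\HCMO_\LV$ is checked directly from kernel decay, then the boundedness of $f\mapsto e^{-t\sqrt{\LV}}f$ from Theorem \ref{thm1} together with the fact that $\HCMO_\LV$ is closed in $\HMO_\LV$ gives the conclusion by a $3\ez$-argument; however, the quantitative route above is self-contained and does not require establishing such a density statement in the metric measure setting.
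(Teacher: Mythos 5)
Your proposal is correct and follows essentially the same route as the paper: reduce via Theorem~\ref{thm1} to the $\BMO_\LV$ bound, split $f$ into an oscillation part and a constant part $c_B$, bound the Carleson content $\mathcal C_{u,B}$ of each piece by a rapidly convergent weighted sum of oscillation/mean quantities over the dilates $2^kB$ (plus a remainder for the constant piece), and then show, using only \eqref{critical} to sort the dilates into the four regimes governed by $\gz_1,\dots,\gz_4$, that each term vanishes uniformly. The one substantive divergence is in how you control the constant term: the paper invokes a power-saving inequality from \cite[(7.1)]{LI}, namely $|f_B|\le C\bigl(\rho(x_B)/r_B\bigr)^{\dz/4}\|f\|^{1-\varepsilon}_{\BMO_\LV}|f|^\varepsilon_{B(x_B,\rho(x_B))}$ for $r_B<\rho(x_B)$, which when multiplied by $\min\{(r_B/\rho(x_B))^\dz,1\}$ from Lemma~\ref{lem-1}(v) leaves a positive power of $r_B/\rho(x_B)$; you instead use the elementary telescoping bound $|f_B|\ls\bigl(1+\log(\rho(x_B)/r_B)\bigr)\|f\|_{\BMO_\LV}+\bigl(\fint_{B(x_B,\rho(x_B))}|f|^2\d\mu\bigr)^{1/2}$ and absorb the logarithm via $x^{\dz}\log(1/x)\to0$. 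Both yield the required smallness; your version is more self-contained, while the paper's is sharper and simply outsources Steps~2--4 to \cite{JL2022,LI}. The bookkeeping via $\Theta_k(B)$ (oscillation or mean according to whether $2^kr_B<\rho(x_B)$) is a harmless repackaging of the paper's purely oscillation-based sum $\sum_k 4^{-k}\sum_{j\le k}(\fint_{4^jB}|f-f_{4^jB}|^2\d\mu)^{1/2}$, since the two are interchangeable up to the mean term, which your remainder $\mathcal R(B)$ already handles.
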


\begin{proof}
{\bf Step 1:} reduction.
If $f\in\mathrm{CMO}_\LV(X)$, then $f\in\mathrm{BMO}_\LV(X)$.
By Theorem \ref{thm1}, we see  $u=e^{-t\sqrt{\LV}}f\in \mathrm{HMO}_\LV(X\times\rr_+)$ with
$$ \|u\|_{\mathrm{HMO}_\LV}  \le C\|f\|_{\mathrm{BMO}_\LV}.$$
It remains to verity $u$ satisfies the three limiting conditions
$$\bz_1(u)=\bz_2(u)=\bz_3(u)=0.$$
To this end, we split $f$ into
$$f=(f-f_B)+f_B.$$

\smallskip

{\bf Step 2:} the difference term satisfies three limiting conditions.
For the difference term, the classical argument from the proof of \cite[Proposition 5.3]{JL2022} implies that for any ball $B=B(x_B,r_B)\subset X$,
$$
\lf(\int_0^{r_B}\fint_{B} \left|t\nabla_{x,t} e^{-t\sqrt{\LV}}(f-f_B)\right|^2\d \mu\frac{\d t}{t}\r)^{1/2}\le C\sum_{k=1}^\fz4^{-k}\sum_{j=1}^k\lf(\fint_{4^{j}B}|f-f_{4^{j}B}|^2\d\mu\r)^{1/2}.
$$
Similar to Theorem \ref{hcmo}, it is easy to verify the term $f-f_B$ satisfies three limiting conditions, the details being omitted here.

\smallskip

{\bf Step 3:} estimate the constant term.
Thus the main difficulty of showing Theorem \ref{cmo} arises from the constant term $f_B$.
%
By \cite[(7.1)]{LI}, we see that for any ball $B=B({x_B,r_B})\subset X$ with $r_B<\rho(x_B)$, it holds that
$$|f_B|\le C\lf(\frac{\rho(x_B)}{r_B}\r)^{\dz/4}\|f\|^{1-\varepsilon}_{\BMO_\LV}|f|^\varepsilon_{B(x_B,\rho(x_B))}$$
for some positive constant $\varepsilon$ (which may be small sufficiently).
From this estimate and Lemma \ref{lem-1}, it holds
$$
\lf[\int_0^{r_B}\fint_{B}  \left |t\nabla_x e^{-t\sqrt{\LV}}f_B\right|^2\d\mu\frac{\d t}{t}\r]^{1/2}
\le
\begin{cases}
\displaystyle C\lf(\frac{r_B}{\rho(x_B)}\r)^{\dz/2}\|f\|^{1-\varepsilon}_{\BMO_\LV}|f|^\varepsilon_{B(x_B,\rho(x_B))},\quad &r_B<\rho(x_B), \\[8pt]
\displaystyle C|f|_{B},&r_B\ge\rho(x_B).
\end{cases}
$$

{\bf Step 4:} the constant term satisfies three limiting conditions.
Once the estimate of $f_B$ in hand, we can consider three limiting conditions at infinity and zero, respectively.
Hence the rest proof of this step is done verbatim as in \cite[Step 4]{LI} (see \cite{SW2022} for the Euclidean case),
and is left to the interested reader.

{\bf Step 5:} completion of the proof.
From Steps 2 and 4, we know that the difference term and the constant term satisfy three limiting conditions, which implies
$$\bz_1(u)=\bz_2(u)=\bz_3(u)=0,$$
and hence completes the proof.
\end{proof}

\section{Application III: harmonic function with weighted Morrey trace}\label{s6}

\subsection{Preliminaries: Muckenhoupt class associated with $V$}
\hskip\parindent
As in \cite{BHS2011}, we say that a weight $\oz$ belongs to the Muckenhoupt class $A_p^{\rho,\theta}(X)$ with $1\le p<\fz$, if
$$
[\oz]_{A_p^{\rho,\theta}}=
\begin{cases}
\dis \sup_B\lf(\frac{1}{\Psi_\theta(B)}\fint_B \oz \d \mu\r)\lf(\sup_{B}\oz^{-1}\r),\quad &p=1,\\[8pt]
\dis \sup_B\lf(\frac{1}{\Psi_\theta(B)}\fint_B \oz \d \mu\r)\lf(\frac{1}{\Psi_\theta(B)}\fint_B \oz^{-\frac{1}{p-1}} \d \mu\r)^{p-1}, &1<p<\fz,
\end{cases}
$$
is finite, where $\Psi_\theta$ is an auxiliary function defined by
$$
\Psi_\theta(B)=\lf(1+\frac{r_B}{\rho(x_B)}\r)^{\theta}
$$
for some $0< \theta<\fz$.
When $\theta=0$, the class $A_p^{\rho,\theta}(X)$ goes back to the classical Muckenhoupt class; see \cite{BLYZ2008,CF1974,Li2019,Mu1972,ST1989} for example.
In fact, when $\theta\neq0$, if we pick $\oz(x)=(1+|x|)^n$ on $\rn$, then
it is not a classical Muckenhoupt $A_\fz$-weight, but
$$
\oz(x)=(1+|x|)^n\in A_1^{\rho,\theta}(\rn)
$$
provided that $V=1$ and $\Psi_\theta(B)=(1+r_B)^\theta$.
We denote $A_p^\rho(X)=\bigcup_{\theta>0}{A_p^{\rho,\theta}}(X)$ for each $1\le p<\fz$.

In the end of this subsection, we collect some properties of $A_p^\rho(X)$ with $1\le p<\fz$; see \cite{BHS2011,Ta2015,ZT2016} for the Euclidean case.

\begin{lemma}\label{lem0}
Let $1\le p<\fz$. The following statements are valid.
\begin{itemize}
 \item[{(i)}]  If $0< \theta_1\le \theta_2<\fz$, then $A_p^{\rho,\theta_1}(X)\subset A_p^{\rho,\theta_2}(X)$.

 \item[{(ii)}]  If $1\le p_1\le  p_2<\fz$, then $A_{p_1}^{\rho,\theta}(X)\subset A_{p_2}^{\rho,\theta}(X)$.

 \item[{(iii)}]  If $1/p+1/{q}=1$, then $\oz\in A_{p}^{\rho,\theta}(X)$ if and only if $\oz^{-\frac{1}{p-1}}\in A_{q}^{\rho,\theta}(X)$.

 \item[{(iv)}]  If $\oz\in A_p^{\rho,\theta}(X)$, then for any $0<r<R<\fz$,
 $$
 \frac{\oz(B(x,R))}{\oz(B(x,r))}\le C\lf(\frac{\mu(B(x,R))}{\mu(B(x,r))}\Psi_\theta(B(x,R))\r)^p.
 $$

 \item[{(v)}] If $\oz\in A_p^{\rho,\theta}(X)$, then there exist $\dz,\theta'>0$ such that
 $$\lf(\fint_B \oz^{1+\dz}\d\mu\r)^{\frac{1}{1+\dz}}\le C\lf(\fint_B\oz\d\mu\r)\lf(1+\frac{r_B}{\rho(x_B)}\r)^{\theta'}.$$

 \item[{(vi)}] If $\oz\in A_p^{\rho}(X)$, then $\oz^{1+\dz} (X)\in A_p^{\rho}(X)$ for some $\dz>0$.

 \item[{(vii)}] If $1< p<\fz$, then $\oz\in A_p^\rho(X)$ implies $\oz\in A_{p-\varepsilon}^\rho(X)$ for some small $\varepsilon>0$.
\end{itemize}
\end{lemma}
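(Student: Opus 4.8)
The plan is to verify (i)--(vii) in the stated order, since each later part rests on the earlier ones, adapting to the $\rho$-localized setting the classical arguments recorded in \cite{BHS2011,Ta2015,ZT2016} and relying on the doubling property of $\mu$ together with the basic estimate \eqref{critical} for the critical function.

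Parts (i)--(iv) are essentially algebraic. For (i), since $1+r_B/\rho(x_B)\ge1$ we have $\Psi_{\theta_1}(B)\le\Psi_{\theta_2}(B)$, so enlarging the exponent only decreases each factor in the definition of $[\oz]_{A_p^{\rho,\theta}}$, giving $[\oz]_{A_p^{\rho,\theta_2}}\le[\oz]_{A_p^{\rho,\theta_1}}$. For (ii), write $\oz^{-1/(p_2-1)}=(\oz^{-1/(p_1-1)})^{(p_1-1)/(p_2-1)}$ with exponent $\le1$ and apply Jensen's inequality; combined with $\Psi_\theta(B)^{p_2-1}\ge\Psi_\theta(B)^{p_1-1}$ this yields $[\oz]_{A_{p_2}^{\rho,\theta}}\le[\oz]_{A_{p_1}^{\rho,\theta}}$, the case $p_1=1$ being handled by bounding $\fint_B\oz^{-1/(p_2-1)}\d\mu$ by a power of the essential supremum of $\oz^{-1}$ on $B$. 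For (iii), set $q=p/(p-1)$; then $q-1=1/(p-1)$ and $(\oz^{-1/(p-1)})^{-1/(q-1)}=\oz$, and a one-line computation shows $[\oz^{-1/(p-1)}]_{A_q^{\rho,\theta}}=[\oz]_{A_p^{\rho,\theta}}^{1/(p-1)}$. For (iv), Hölder's inequality gives $\mu(B(x,r))\le\oz(B(x,r))^{1/p}\big(\int_{B(x,r)}\oz^{-1/(p-1)}\d\mu\big)^{(p-1)/p}$; since $\int_{B(x,r)}\oz^{-1/(p-1)}\d\mu\le\int_{B(x,R)}\oz^{-1/(p-1)}\d\mu$ and the $A_p^{\rho,\theta}$ inequality on $B(x,R)$ bounds $\big(\fint_{B(x,R)}\oz^{-1/(p-1)}\d\mu\big)^{p-1}$ by $[\oz]_{A_p^{\rho,\theta}}\,\Psi_\theta(B(x,R))^p\big(\fint_{B(x,R)}\oz\d\mu\big)^{-1}$, substitution and rearrangement give the stated bound; the case $p=1$ follows directly from the $A_1^{\rho,\theta}$ inequality.

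The heart of the lemma is (v), the reverse Hölder inequality carrying the correction factor $(1+r_B/\rho(x_B))^{\theta'}$. My plan is to first handle balls $B$ with $r_B\le\rho(x_B)$: on these $\Psi_\theta(B)\le2^\theta$, so the restriction of $\oz$ to $B$ satisfies a classical $A_p$ inequality with constant independent of $B$, and the standard self-improvement argument (a Gehring/Calder\'on--Zygmund type iteration, valid on the doubling space $(X,d,\mu)$) produces a uniform $\delta>0$ with $\big(\fint_B\oz^{1+\delta}\d\mu\big)^{1/(1+\delta)}\le C\fint_B\oz\d\mu$. For a general ball with $r_B>\rho(x_B)$, cover $B$ by finitely many balls of radius $\approx\rho$ centered in $B$; the doubling property of $\mu$ controls their number and overlap, while \eqref{critical} shows that all of their $\rho$-values are comparable to $\rho(x_B)$ up to polynomial factors in $r_B/\rho(x_B)$. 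Applying the small-ball estimate on each piece and summing, the accumulated loss is absorbed into a fixed power $(1+r_B/\rho(x_B))^{\theta'}$. (Alternatively, one first establishes the $A_\infty^\rho$-type inequality $\oz(E)/\oz(B)\le C(\mu(E)/\mu(B))^\eta\,\Psi_{\theta''}(B)^c$ for measurable $E\subset B$, from which (v) follows by the usual distribution-function integration.)

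Finally, (vi) follows by applying (v) to $\oz$ and, through (iii), to $\oz^{-1/(p-1)}\in A_q^\rho(X)$, and then checking that $\oz^{1+\delta}$ and $\oz^{-(1+\delta)/(p-1)}$ still obey the respective $A_p^\rho$ and $A_q^\rho$ conditions with an enlarged parameter $\theta$. Part (vii) follows from the reverse Hölder bound for $\oz^{-1/(p-1)}\in A_q^\rho(X)$: with $\varepsilon=(p-1)\delta/(1+\delta)>0$ one has $\oz^{-1/(p-\varepsilon-1)}=(\oz^{-1/(p-1)})^{1+\delta}$, and inserting the reverse Hölder estimate for $\oz^{-1/(p-1)}$ into the $A_{p-\varepsilon}^\rho$ quotient, together with the original $A_p^{\rho,\theta}$ inequality for $\oz$, yields finiteness of $[\oz]_{A_{p-\varepsilon}^{\rho,\theta''}}$ for a suitable $\theta''$. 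The main obstacle is precisely (v): tracking how every constant depends on the ball, and splicing the small-scale reverse Hölder estimates across scales larger than $\rho$ without losing more than a fixed power of $1+r_B/\rho(x_B)$.
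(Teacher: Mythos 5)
Your proposal is correct and follows essentially the same route as the paper's proof. Parts (i)--(iii) are indeed direct manipulations of the definition (Jensen for (ii), the duality identity for (iii)); for (iv) the paper records the auxiliary inequality $\frac{1}{\Psi_\theta(B)\mu(B)}\int_B|f|\d\mu\le[\oz]_{A_p^{\rho,\theta}}^{1/p}\bigl(\frac{1}{\oz(B)}\int_B|f|^p\oz\d\mu\bigr)^{1/p}$ and specializes $f=\mathbbm{1}_{B(x,r)}$, which is just a packaged form of your H\"older argument; and for (v) the paper does exactly what you propose, namely invoke the classical Coifman--Fefferman self-improvement on balls $r_B<\rho(x_B)$ and cover a large ball by critical balls (citing the partition-of-unity construction of \cite{BDL2018} and the patching argument of \cite[Lemma 5]{BHS2011}), giving the explicit exponent $\theta'=\frac{k_0Qp}{k_0+1}+(\frac{k_0}{k_0+1}+1)\theta p+\frac{2(k_0+1)Q\dz}{1+\dz}$. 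For (vii) the paper isolates the elementary power inequality $[\oz^\gz]_{A_{\gz p+1-\gz}^{\rho,\theta}}\le[\oz]^\gz_{A_p^{\rho,\theta}}$ and then defers to the Grafakos argument; your alternative (apply reverse H\"older directly to $\oz^{-1/(p-1)}\in A_{p'}^\rho$ and set $\varepsilon=(p-1)\delta/(1+\delta)$) is the standard version of the same computation and produces the same $\varepsilon$. No gap.
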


\begin{proof}
Properties (i)-(iii) are obvious by the definition of $A_p^{\rho,\theta}(X)$.
\smallskip

(iv) By checking the proof of \cite[Lemma 2.2 (iii)]{Ta2015}, we see that
\begin{align}\label{q2}
\frac{1}{\Psi_\theta(B)\mu(B)}\int_B |f| \d \mu \le [\oz]_{A_p^{\rho,\theta}}^{1/p} \lf(\frac{1}{\oz(B)}\int_B |f|^p\oz \d \mu\r)^{1/p}.
\end{align}
Letting $f=\mathbbm{1}_E$ with $E\subset B$ leads to the desired result.

\smallskip

(v) Following the argument in \cite[Theorem IV]{CF1974}, we obtain that
$$
\lf(\fint_B \oz^{1+\dz}\d\mu\r)^{\frac{1}{1+\dz}}\le C\lf(\fint_B\oz\d\mu\r)
$$
for every $B\subset X$ with $r_B<\rho(r_B)$.

It remains to consider the large ball case.
For any $B\subset X$ with $r_B\ge \rho(r_B)$, we can decompose it into the union of some critical balls (whose radiuses equal to the value of $\rho$ at centers)
by the the partition of unity of $X$ via critical balls; see \cite{BDL2018} for the partition of unity.
Therefore a similar argument employed in \cite[Lemma 5]{BHS2011} tells us that
 $$\lf(\fint_B \oz^{1+  \dz}\d\mu\r)^{\frac{1}{1+\dz}}\le C\lf(\fint_B\oz\d\mu\r)\lf(1+\frac{r_B}{\rho(x_B)}\r)^{\theta'},$$
where $\theta'=\frac{k_0Qp}{k_0+1}+(\frac{k_0}{k_0+1}+1)\theta p+\frac{2(k_0+1)Q\dz}{1+\dz}$.

\smallskip

(vi) This is a direct consequence of (iii) and (v).

\smallskip

(vii) We first note by the definition of $A_p^{\rho,\theta}(X)$ and H\"{o}lder's inequality that
$$[\oz^\gz]_{A_{\gz p+1-\gz}^{\rho,\theta}}\le [\oz]^\gz_{A_p^{\rho,\theta}}$$
for any $0<\gz<1$.
Then the rest proof is done verbatim as in \cite[Corollary 7.2.6]{GTM249}, and is left to the interested reader.
\end{proof}

\subsection{Statement of the main result}
\hskip\parindent
To state the main result of this section, we first introduce some function classes.%

\begin{definition}\label{def1}
For any $1<p<\fz$, $0<\lz<1$, $\tau\in \rr$ and $\theta>0$,
a locally integrable function $f$ defined on $X$ is said to be in $L^{p,\lz}_{\tau}(\d\oz)=L^{p,\lz}_{\tau}(X,\d\oz)$ with $\oz\in A_p^{\rho,\theta}(X)$,
the weighted Morrey space related to $V$, if $f$ satisfies
$$
\|f\|_{L^{p,\lz}_{\tau}(\d\oz)}=\sup_{B}\lf(\frac{1}{\oz(B)^\lz}\int_B|f|^p\d \oz\r)^{1/p}\lf(1+\frac{r_B}{\rho(x_B)}\r)^{\tau}<\fz,
$$
where $\oz(B)$ stands for the weighted measure of $B$ as
$$\oz(B)=\int_B\d\oz=\int_B\oz\d\mu.$$
The union $\bigcup_{\tau\in\rr} L^{p,\lz}_{\tau}(X,\d\oz)$ is denoted by $L^{p,\lz}(X,\d\oz)$.
\end{definition}

\begin{remark}
Obviously, when $\oz=1$ and $\tau=0$,
this Morrey space is introduced by Morrey \cite{Mo1938} in the Euclidean setting. 
Moreover, further letting $\lz=0$,
the Morrey space $L^{p,\lz}(\rn)$ goes back to the classical Lebesgue space $L^{p}(\rn)$. 
\end{remark}

\begin{definition}\label{def2}
For any $1<p<\fz$, $0<\lz<1$, $\tau\in \rr$ and $\theta>0$,
an $\mathbb{L}$-harmonic function $u$ defined on $X\times \rr_+$
is said to be in $L^\fz(L^{p,\lz}_\tau)(\d\oz\d t)=L^\fz(L^{p,\lz}_\tau)(X\times \rr_+,\d\oz\d t)$ with $\oz\in A_p^{\rho,\theta}(X)$,
the weighted harmonic Morrey space related to $V$,
if $u$ satisfies
$$
\|u\|_{L^\fz(L^{p,\lz}_\tau)(\d\oz\d t)}=\sup_{t>0}\|u(\cdot,t)\|_{L^{p,\lz}_\tau(\d\oz)}<\fz.
$$
The union $\bigcup_{\tau\in\rr} L^\fz(L^{p,\lz}_\tau)(X\times \rr_+,\d\oz\d t)$ is denoted by $L^\fz(L^{p,\lz})(X\times \rr_+,\d\oz\d t)$.
\end{definition}

Under the Euclidean setting, Song--Tian--Yan \cite{STY2018} characterized the $\mathbb{L}$-harmonic function with Morrey trace ($\oz=1$ and $\tau=0$)
via the Carleson measure condition
provided $V\in RH_q(\rn)$ for some $q>n$.
This result was further extended to the metric measure space and the critical reverse H\"{o}lder order improved to $(Q+1)/2$ by Shen--Li \cite{SL2022}.
As an application of Sections \ref{ho} and \ref{improved}, 
we can further improve the critical reverse H\"{o}lder order $(Q+1)/2$ of Shen--Li \cite{SL2022} to $\max\{Q/2,1\}$.
With the help of the weighted Morrey function related to $V$ in Definition \ref{def1}, we can derive another boundary behaviour of the $\mathbb{L}$-harmonic function as follows.

\begin{theorem}\label{thm3}
Let $(X,d,\mu,\mathscr{E})$ be a complete Dirichlet metric measure space satisfying a doubling property $(D)$ and admitting an $L^2$-Poincar\'{e} inequality $(P_2)$.
Suppose that $0\le V\in A_\infty(X)\cap RH_{q}(X)$ for some $q>\max\{Q/2,1\}$.
For any $1<p<\fz$, $0<\lz<1$ and $\theta>0$,
if $\oz\in A_p^{\rho,\theta}(X)$, then $u\in L^\fz(L^{p,\lz})(X\times \rr_+,\d\oz\d t)$ if and only if $u=e^{-t\sqrt{\LV}} f$ for some $f\in L^{p,\lz}(X,\d\oz)$.
Moreover, for any $\tau\in\rr$, it holds that
$$\|f\|_{L^{p,\lz}_\tau(\d\oz)}\le \|u\|_{L^\fz(L^{p,\lz}_\tau)(\d\oz\d t)}\le C\|f\|_{L^{p,\lz}_{\tau+\lz\theta}(\d\oz)}.$$
%
%
%
%
\end{theorem}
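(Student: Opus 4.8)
The plan is to prove the two implications of Theorem~\ref{thm3} separately, carrying the weighted Morrey index along, following the scheme of Shen--Li \cite{SL2022} but invoking the critical--index regularity results of Sections~\ref{ho}--\ref{improved} in place of their $(Q+1)/2$ counterparts. \emph{From trace to solution:} fix $f\in L^{p,\lz}_{\tau+\lz\theta}(X,\d\oz)$ with $\oz\in A_p^{\rho,\theta}(X)$. First I would check that $u=e^{-t\sqrt{\LV}}f$ converges absolutely at every $(x,t)\in X\times\rr_+$: decomposing $X$ into dyadic annuli about $x$, estimating the Poisson kernel by $(PUB)$ of Lemma~\ref{lem-1}, controlling $\oz(B(x,R))/\oz(B(x,r))$ by Lemma~\ref{lem0}(iv), and applying H\"older's inequality with exponent $p$ against $\d\oz$, each annular term is dominated by a negative power of the geometric scale times $\|f\|_{L^{p,\lz}_{\tau+\lz\theta}(\d\oz)}$, the series converging thanks to the decay factor $(1+(t+d(x,y))/\rho(x))^{-N}$ in $(PUB)$. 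For the uniform-in-$t$ Morrey bound on a ball $B=B(x_B,r_B)$ I would split $f$ into its restriction to $2B$ and to the annuli $2^{j+1}B\setminus 2^jB$: on the local piece $(PUB)$ dominates $e^{-t\sqrt{\LV}}$ by a $\rho$-adapted maximal operator that is bounded on $L^p(\d\oz)$ for $\oz\in A_p^{\rho}(X)$ by the weighted theory (cf.\ \cite{SL2022,STY2018}); on each remote piece $(PUB)$ yields geometric decay against the Morrey norm over $2^jB$; summing and converting $\oz(2^jB)/\oz(B)$ through Lemma~\ref{lem0}(iv) produces the factor $(1+r_B/\rho(x_B))^{-\tau}$ at the cost of the $\lz\theta$ loss in the index. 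This gives $u\in L^\fz(L^{p,\lz}_\tau)(\d\oz\d t)$ together with the right-hand inequality $\|u\|_{L^\fz(L^{p,\lz}_\tau)(\d\oz\d t)}\le C\|f\|_{L^{p,\lz}_{\tau+\lz\theta}(\d\oz)}$.

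\emph{From solution to trace:} given $u\in L^\fz(L^{p,\lz}_\tau)(\d\oz\d t)$, I would first produce the boundary value by extracting, from the bound $\sup_{t>0}\|u(\cdot,t)\|_{L^{p,\lz}_\tau(\d\oz)}<\fz$ and the local weak-$\ast$ relative compactness of $L^p(\d\oz)$ on balls, a sequence $t_k\downarrow0$ with $u(\cdot,t_k)\rightharpoonup f$; lower semicontinuity of the Morrey norm then gives $f\in L^{p,\lz}_\tau(\d\oz)$ with $\|f\|_{L^{p,\lz}_\tau(\d\oz)}\le\|u\|_{L^\fz(L^{p,\lz}_\tau)(\d\oz\d t)}$, the left-hand inequality. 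The heart of the argument is the semigroup identity $u(x,t+s)=e^{-t\sqrt{\LV}}(u(\cdot,s))(x)$ for every $s>0$: the difference $w(x,t)=u(x,t+s)-e^{-t\sqrt{\LV}}(u(\cdot,s))(x)$ is $\mathbb{L}$-harmonic on $X\times\rr_+$ with zero trace, it is locally bounded by the H\"older estimate of Theorem~\ref{holder1}, and---after turning the Morrey control of $u$ into an $L^2_{\loc}(\d\mu)$ bound with at most polynomial growth via H\"older's inequality and the reverse H\"older and doubling properties of $\oz$ collected in Lemma~\ref{lem0}, and bounding $e^{-t\sqrt{\LV}}(u(\cdot,s))$ by $(PUB)$---it satisfies the weighted $L^2$ growth hypothesis of Theorem~\ref{Liouville1} (equivalently, after an even reflection in $t$, of Theorem~\ref{Liouville}), whence $w\equiv0$. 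Letting $s=t_k\to0$ and passing the weak-$\ast$ convergence through the Poisson kernel, which at fixed $(x,t)$ lies in $L^{p'}(\d\oz)$ with tails controlled by $(PUB)$ so a truncation argument applies, yields $u=e^{-t\sqrt{\LV}}f$; combined with the first part this closes the norm equivalence.

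\emph{Main obstacle:} I expect the delicate point to be verifying, in the solution-to-trace direction, that $w$ meets the precise hypothesis of the Liouville theorem. That hypothesis is an unweighted $L^2(\d\mu)$ integrability condition with the volume-weighted denominator of \eqref{global-control}, whereas $u$ is controlled only on the weighted Morrey $L^p(\d\oz)$ scale; bridging the two requires passing from $\d\oz$ to $\d\mu$ (a power is lost through the reverse H\"older self-improvement of Lemma~\ref{lem0}(v)), passing from $L^p$ to $L^2$ (a further use of H\"older and of the local $A_p^{\rho}$ machinery), and then checking that the resulting polynomial growth is slow enough relative to the volume factor, with \eqref{critical} used to absorb the $\Psi_\theta$-type factors uniformly in the center. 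A secondary, purely bookkeeping, difficulty is tracking the exact index shift $\tau\mapsto\tau+\lz\theta$ through the dyadic-annuli sums, which is precisely where Lemma~\ref{lem0}(iv) enters; both points are the natural analogues of the computations in \cite{STY2018,SL2022}, now available in the critical range $q>\max\{Q/2,1\}$ owing to Theorems~\ref{holder1} and \ref{Liouville1}.
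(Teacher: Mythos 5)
Your proposal is correct and follows essentially the same route as the paper: the trace-to-solution direction via an annular decomposition of $f$, the $(PUB)$ bounds, the $\rho$-adapted maximal operator on $L^p(\d\oz)$, and Lemma \ref{lem0}(iv) to produce the $\tau\mapsto\tau+\lz\theta$ shift; and the solution-to-trace direction via weak compactness of $\{u(\cdot,s)\}$ on balls, the semigroup identity $u(\cdot,t+s)=e^{-t\sqrt{\LV}}u(\cdot,s)$ proved by checking polynomial growth of the difference and invoking Theorem \ref{Liouville1}, then passing the weak limit through a truncated Poisson kernel. The obstacle you flag (converting the weighted Morrey control into the unweighted $L^2(\d\mu)$ Liouville hypothesis via Lemma \ref{lem0} and \eqref{critical}) is exactly the content of Lemma \ref{lem1}(iii) in the paper.
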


\begin{remark}
When $(X,d,\mu)=(\rn,|\cdot|,\d x)$,  $V=0$, $\lz=\tau=0$ and $\oz=1$,
our Theorem \ref{thm3} goes back to the result of Stein--Weiss; see \cite[Chapter II]{SW1971} for more details.
It is remarkable that our method and technique in the proof
of Theorem \ref{thm3} are more or less different from that of Stein--Weiss \cite{SW1971}.
On the one hand, from the solution to trace, the sub-half-space
$$\rn\times\rr^{t_0}_+=\{(t,x)\in \rn\times\rr_+: t\ge t_0>0\}$$
plays a key role in the proof of Stein--Weiss.
Without this sub-half-space,
we directly control the upper bound of an $\mathbb{L}$-harmonic function
and its Poisson integral via the mean value property and the Poisson kernel.
With the help of these estimates, 
we can derive the Poisson semigroup can be regarded as a translation transformation with respect to the time variable;
see Lemma \ref{lem1} and its proof for more details.
Therefore we can seek the trace of the $\mathbb{L}$-harmonic function
via the feature of the Morrey function and the locally Lebesgue function; see the proof of Theorem \ref{bddmo} for more details.
On the other hand, from trace to solution, the arguments of Stein--Weiss is very simple.
It relies heavily on the conservative property of the Poisson semigroup,
and the Young inequality by noting the facts the Poisson kernel is in $L^1(\rn)$ and the initial value is in $L^p(\rn)$.
Unfortunately, due to the fact that the non-negative potential $V$ is non-trivial, the conservative property no longer holds.
Moreover, the Young inequality may fail to the Morrey function as the following reason.
The concept of the integral average over a ball in the Morrey function is totally different from that of the Lebesgue function,
and hence the Young inequality is no longer applicable due to the complex structure of a Morrey function.
To overcome these difficulties, we consider the small time case and the large time case to estimate the Poisson integral of a weighted Morrey function
via the annulus technique, the variant Hardy--Littlewood maximal function, and the pointwise bound for the Poisson kernel associated to $\LV$;
see the proof of Theorem \ref{moo} for more details.
\end{remark}

\subsection{From solution to trace}
\hskip\parindent
In this subsection we seek the Morrey trace of a weighted harmonic Morrey function with desired norm controlled.
\begin{theorem}\label{bddmo}
Let $(X,d,\mu,\mathscr{E})$ be a complete Dirichlet metric measure space satisfying a doubling property $(D)$ and admitting an $L^2$-Poincar\'{e} inequality $(P_2)$.
Suppose that $0\le V\in A_\infty(X)\cap RH_{q}(X)$ for some $q>\max\{Q/2,1\}$.
For any $1<p<\fz$, $0<\lz<1$ and $\theta>0$,
if $u\in L^\fz(L^{p,\lz})(X\times \rr_+,\d\oz\d t)$ with $\oz\in A_p^{\rho,\theta}(X)$, then $u=e^{-t\sqrt{\LV}}f$ for some $f\in L^{p,\lz}(X,\d\oz)$.
Moreover, for any $\tau\in\rr$, it holds that
$$\|f\|_{L^{p,\lz}_\tau(\d\oz)}\le \|u\|_{L^\fz(L^{p,\lz}_\tau)(\d\oz\d t)}.$$
\end{theorem}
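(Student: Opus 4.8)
The plan is to recover $u$ as the Poisson integral $e^{-t\sqrt{\LV}}f$ of a weak limit $f$ of its time-slices $u(\cdot,s)$ as $s\downarrow0$, and then to transfer the uniform Morrey bound on the slices onto $f$ by lower semicontinuity. Write $M=\|u\|_{L^\fz(L^{p,\lz}_\tau)(\d\oz\d t)}$, so that $\|u(\cdot,s)\|_{L^{p,\lz}_\tau(\d\oz)}\le M$ for every $s>0$; in particular, on each fixed ball $B\subset X$ the family $\{u(\cdot,s)\}_{s>0}$ is bounded in $L^p(B,\d\oz)$, uniformly in $s$. First I would record the quantitative regularity already in hand: $u$ is continuous on $X\times\rr_+$ by Theorem \ref{holder1}, and the mean value property of Lemma \ref{mean}, combined with H\"older's inequality and the weight comparison of Lemma \ref{lem0}(iv) (which passes from $\mu$-averages to weighted $L^p$-averages over the Morrey ball $B(x,t)$), produces a pointwise bound of the form
$$
|u(x,t)|\le C_N\,M\,\oz(B(x,t))^{(\lz-1)/p}\lf(1+\frac{t}{\rho(x)}\r)^{-N}\qquad(t>0)
$$
for any $N>0$; via $(PUB)$ and the same weight comparison, $e^{-t\sqrt{\LV}}(u(\cdot,s))$ is then seen to be well defined for every $s>0$, with an analogous bound.

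Next I would establish the reproducing identity
$$
u(x,t+s)=e^{-t\sqrt{\LV}}\big(u(\cdot,s)\big)(x),\qquad (x,t)\in X\times\rr_+,
$$
for each fixed $s>0$. The difference $w(x,t):=u(x,t+s)-e^{-t\sqrt{\LV}}(u(\cdot,s))(x)$ is $\mathbb{L}$-harmonic on $X\times\rr_+$ with zero boundary trace (both terms converge to $u(\cdot,s)$ as $t\to0$); since $s>0$ is fixed, $w$ is bounded on compacta (no degeneracy at $t=0$), while the pointwise bounds above give it arbitrarily fast decay at infinity, so that for $\bz$ large enough $w$ meets the weighted $L^2$-integrability hypothesis of the Liouville theorem (Theorem \ref{Liouville1}). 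Hence $w\equiv0$.

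To extract the trace, fix an exhausting sequence of balls $B_k=B(x_0,2^k)$. Reflexivity of each $L^p(B_k,\d\oz)$ together with the uniform bound on $\{u(\cdot,s)\}_{0<s\le1}$ yields, by a diagonal argument, a sequence $s_j\downarrow0$ and $f\in L^p_{\mathrm{loc}}(X,\d\oz)$ with $u(\cdot,s_j)\rightharpoonup f$ weakly in $L^p(B_k,\d\oz)$ for every $k$. Weak lower semicontinuity of the $L^p(B,\d\oz)$-norm gives, for every ball $B\subset X$,
$$
\lf(\frac1{\oz(B)^\lz}\int_B|f|^p\,\d\oz\r)^{1/p}\lf(1+\frac{r_B}{\rho(x_B)}\r)^{\tau}\le\liminf_{j\to\fz}\lf(\frac1{\oz(B)^\lz}\int_B|u(\cdot,s_j)|^p\,\d\oz\r)^{1/p}\lf(1+\frac{r_B}{\rho(x_B)}\r)^{\tau}\le M,
$$
so $f\in L^{p,\lz}_\tau(X,\d\oz)$ with $\|f\|_{L^{p,\lz}_\tau(\d\oz)}\le M$; in particular $f\in L^{p,\lz}(X,\d\oz)$, which is the asserted estimate. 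It remains to pass to the limit along $s_j$ in the reproducing identity: fixing $(x,t)\in X\times\rr_+$, one has $u(x,t+s_j)=\int_X p_t^{\LV}(x,y)\,u(y,s_j)\,\d\mu(y)$, the left side tending to $u(x,t)$ by continuity. On a fixed ball $B(x,M_0)$ the function $y\mapsto p_t^{\LV}(x,y)$ is bounded, and since the dual weight $\oz^{1-p'}$ (with $1/p+1/p'=1$) lies in $A_{p'}^{\rho}(X)$ by Lemma \ref{lem0}(iii), hence is locally integrable, the test function $y\mapsto p_t^{\LV}(x,y)\oz(y)^{-1}$ belongs to $L^{p'}(B(x,M_0),\d\oz)$; weak convergence then gives $\int_{B(x,M_0)}p_t^{\LV}(x,y)u(y,s_j)\,\d\mu(y)\to\int_{B(x,M_0)}p_t^{\LV}(x,y)f(y)\,\d\mu(y)$. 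Letting $j\to\fz$ and then $M_0\to\fz$ (the tail handled below) yields $u(x,t)=\int_X p_t^{\LV}(x,y)f(y)\,\d\mu(y)=e^{-t\sqrt{\LV}}f(x)$; the limit is independent of the subsequence, being the boundary trace of $u$.

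The hard part will be the tail estimate $\int_{B(x,M_0)^c}p_t^{\LV}(x,y)\,|u(y,s_j)|\,\d\mu(y)\to0$ as $M_0\to\fz$, uniformly in $j$. Over the dyadic annuli $B(x,2^{k+1}M_0)\setminus B(x,2^kM_0)$ I would apply H\"older's inequality against $\d\oz$ and its dual $\oz^{1-p'}$, bound the annular $L^p(\d\oz)$-norm of $u(\cdot,s_j)$ by $M\,\oz(B(x,2^{k+1}M_0))^{\lz/p}(1+2^kM_0/\rho(x))^{-\tau}$ from the Morrey definition, and control $p_t^{\LV}(x,\cdot)$ on the annulus via $(PUB)$ with a large exponent $N$. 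The $A_p^{\rho,\theta}$-condition and Lemma \ref{lem0}(iv) convert $\oz(B)^{\lz/p}\,\oz^{1-p'}(B)^{1/p'}$ into $\mu(B)\,\Psi_\theta(B)\,\oz(B)^{(\lz-1)/p}$, and here the hypothesis $\lz<1$ is precisely what keeps the last factor from growing as the ball enlarges; choosing $N$ large relative to $\theta$ and $\tau$ makes the resulting series over $k$ convergent with sum tending to $0$ as $M_0\to\fz$. The extra $\rho$-decay factor in $(PUB)$ is genuinely used when $\lz$ is close to $1$. Together with the verification of the growth hypothesis in the Liouville step, this is the only real technical point of the proof.
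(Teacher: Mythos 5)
Your proposal is correct and mirrors the paper's own argument almost step for step: the pointwise bound from Lemma~\ref{mean} plus the weight comparison yields the bound on $|u(x,t)|$ and the well-definedness of $e^{-t\sqrt{\LV}}u_s$; the reproducing identity $u(x,t+s)=e^{-t\sqrt{\LV}}u_s(x)$ is exactly Lemma~\ref{lem1}(iii), proved the same way via Theorem~\ref{Liouville1}; the weak limit via reflexivity, the diagonal argument, weak lower semicontinuity, and the local/tail splitting with $(PUB)$ all coincide with the paper's Steps 1--2. The only cosmetic differences are organizational (you fold Lemma~\ref{lem1} into the body of the proof) and one slightly overstated phrase (``arbitrarily fast decay at infinity'' for $w$; the paper more carefully establishes only polynomial growth, which suffices since $\bz$ in the Liouville hypothesis can be taken arbitrarily large), neither of which affects the correctness.
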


To prove this theorem, let us establish some properties of a weighted harmonic Morrey function.

\begin{lemma}\label{lem1}
Let $(X,d,\mu,\mathscr{E})$ be a complete Dirichlet metric measure space satisfying a doubling property $(D)$ and admitting an $L^2$-Poincar\'{e} inequality $(P_2)$.
Suppose that $0\le V\in  A_\infty(X)\cap RH_{q}(X)$ for some $q>\max\{Q/2,1\}$.
For any $1<p<\fz$, $0<\lz<1$, $\tau\in \rr$ and $\theta>0$,
if $u\in L^\fz(L^{p,\lz}_\tau)(X\times \rr_+,\d\oz\d t)$ with $\oz\in A_p^{\rho,\theta}(X)$,
then the following statements are valid.
\begin{itemize}
  \item[{(i)}] There exists a constant $C>0$ such that for any $x\in X$ and $t>0$,
$$|u(x,t)|\le C\oz(B(x,t))^{\frac{\lz-1}{p}} \|u\|_{L^\fz(L^{p,\lz}_\tau)(\d\oz\d t)}.$$

  \item[{(ii)}] The Poisson integral $e^{-t\sqrt{\LV}}u_s(x)$ of $u_s(\cdot)=u(\cdot,s)$ is well defined on $X\times \rr_+$,
  and satisfies
  $$\lim_{t\to0}e^{-t\sqrt{\LV}}u_s(x)=u_s(x).$$
Moreover, there exists a constant $C>0$ such that for any $x\in X$ and $t,s>0$,
$$
\left|e^{-t\sqrt{\LV}}u_s(x)\right|\le C\oz(B(x,s))^{\frac{\lz-1}{p}}\lf[\lf(1+\frac{s}{\rho(x)}\r)^{(k_0+1)\theta}+\lf(\frac{\rho(x)}{s}\r)^Q\r]
 \|u\|_{L^\fz(L^{p,\lz}_\tau)(\d\oz\d t)}.
 $$

 \item[{(iii)}]  For any $x\in X$ and $s,t>0$, it holds
$$
  u(x,t+s)=e^{-t\sqrt{\LV}} u_s(x).
$$
\end{itemize}
\end{lemma}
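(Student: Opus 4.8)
I would prove the three parts in the stated order, since each relies on its predecessors.

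\smallskip

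\textbf{Part (i).} The plan is to combine the mean value property for $\mathbb L$-harmonic functions (Lemma~\ref{mean}) with a weighted H\"older inequality. Fixing $(x,t)\in X\times\rr_+$ and applying Lemma~\ref{mean} on the ball $B_0=B((x,t),t/4)$, whose double lies in $\{(y,r):r>t/2\}\subset X\times\rr_+$, gives $|u(x,t)|\le C\big(\fint_{B((x,t),t/2)}|u|\,\d\mu\big)(1+t/\rho(x))^{-N}$ for every $N>0$. Writing $\d\mu(y,r)=\d\mu(y)\,\d r$ and using $\mu(B(x,t/2))\approx\mu(B(x,t))$, it remains to bound $\fint_{B(x,t/2)}|u(y,r)|\,\d\mu(y)$ uniformly for $r\in(t/2,3t/2)$; here I would apply H\"older's inequality against $\oz$, estimating $\fint_{B(x,t/2)}\oz^{-1/(p-1)}\,\d\mu$ by the $A_p^{\rho,\theta}$ condition — which contributes a factor $\Psi_\theta(B(x,t/2))\approx(1+t/\rho(x))^{\theta}$ — and $\int_{B(x,t/2)}|u(y,r)|^p\,\d\oz$ by the definition of $\|u(\cdot,r)\|_{L^{p,\lz}_\tau(\d\oz)}\le\|u\|_{L^\fz(L^{p,\lz}_\tau)(\d\oz\d t)}$. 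The surviving powers $(1+t/\rho(x))^{\theta}$ and $(1+t/\rho(x))^{-\tau}$ are then absorbed by taking $N$ large, which yields the clean bound of part~(i).

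\smallskip

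\textbf{Part (ii).} I would first check convergence of $\int_X p_t^{\LV}(x,y)|u(y,s)|\,\d\mu(y)$. Using part~(i) together with the lower bound $\oz(B(y,s))\gtrsim\oz(B(x,s))\big(\tfrac{s}{s+d(x,y)}\big)^{Qp}\big(1+\tfrac{s+d(x,y)}{\rho(x)}\big)^{-(k_0+1)\theta p}$, which follows from Lemma~\ref{lem0}(iv) and \eqref{critical}, the factor $|u(y,s)|$ grows at most polynomially in $d(x,y)$, and this is dominated by the arbitrarily fast decay in $(PUB)$; hence $e^{-t\sqrt{\LV}}u_s$ is well defined, and, being the Poisson extension of a slowly growing function, it solves $\mathbb Lv=0$ on $X\times\rr_+$. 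Splitting the $y$-integral over $d(x,y)<s$ and the dyadic shells $d(x,y)\sim2^j s$, and separating the cases $s<\rho(x)$ and $s\ge\rho(x)$, produces precisely the two summands $(1+s/\rho(x))^{(k_0+1)\theta}$ and $(\rho(x)/s)^Q$ in the stated estimate. For the boundary limit I would write
$$e^{-t\sqrt{\LV}}u_s(x)-u_s(x)=\int_X p^{\LV}_t(x,y)\big(u(y,s)-u(x,s)\big)\,\d\mu(y)+u(x,s)\big(e^{-t\sqrt{\LV}}1(x)-1\big),$$
where the first term tends to $0$ as $t\to0^+$ by the H\"older continuity of $u(\cdot,s)$ (Theorem~\ref{holder} or Theorem~\ref{holder1}) and the concentration of $p^{\LV}_t(x,\cdot)$ near $x$, while the second tends to $0$ since $|e^{-t\sqrt{\LV}}1(x)-1|\le C\int_0^t(r/\rho(x))^{\dz}\,\tfrac{\d r}{r}\le C(t/\rho(x))^{\dz}$ by Lemma~\ref{lem-1}(v).

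\smallskip

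\textbf{Part (iii).} This is where the real work lies, and I expect it to be the main obstacle. Fixing $s>0$, set $w(x,t)=u(x,t+s)-e^{-t\sqrt{\LV}}u_s(x)$ on $X\times\rr_+$. Both terms are $\mathbb L$-harmonic there — the first as a time-translate of $u$, the second by part~(ii) — so $w$ is $\mathbb L$-harmonic; and $w$ has zero trace, since $u(x,t+s)\to u(x,s)$ as $t\to0^+$ by continuity of $u$ (Theorem~\ref{holder} or Theorem~\ref{holder1}) while $e^{-t\sqrt{\LV}}u_s(x)\to u_s(x)=u(x,s)$ by part~(ii). The delicate step is to verify the weighted $L^2$ hypothesis of the half-space Liouville theorem (Theorem~\ref{Liouville1}) for $w$: combining the pointwise bounds of parts~(i) and~(ii) with \eqref{critical} and Lemma~\ref{lem0}(iv) to control the growth of $\oz(B(x,s))^{(\lz-1)/p}$, $(1+s/\rho(x))^{(k_0+1)\theta}$ and $(\rho(x)/s)^Q$ in $d(x,x_0)$, one finds that $|w(x,t)|$ grows at most polynomially in $d(x,x_0)$ and is bounded in $t$, so for $\bz$ large enough the hypothesis of Theorem~\ref{Liouville1} is met. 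Theorem~\ref{Liouville1} then forces $w\equiv0$, that is, $u(x,t+s)=e^{-t\sqrt{\LV}}u_s(x)$ (one may also reduce to Theorem~\ref{Liouville} by a reflection in $t$). The hard part is thus the careful bookkeeping that legitimises $e^{-t\sqrt{\LV}}u_s$ as a bona fide weak $\mathbb L$-harmonic function with the claimed trace and that fits $w$ exactly into the integrability assumption of the Liouville theorem; by contrast, the annulus decompositions in parts~(i)--(ii) are routine once $(PUB)$, Lemma~\ref{mean} and the $A_p^{\rho,\theta}$ inequality are available.
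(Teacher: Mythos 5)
Your proposal follows essentially the same route as the paper: part~(i) via Lemma~\ref{mean} plus the weighted H\"older/$A_p^{\rho,\theta}$ inequality, part~(ii) via $(PUB)$, Lemma~\ref{lem0}, and a decomposition controlled by H\"older continuity and the Poisson-kernel perturbation, and part~(iii) by setting $w = u(\cdot,\cdot+s) - e^{-t\sqrt{\LV}}u_s$, checking zero trace and polynomial growth, and invoking Theorem~\ref{Liouville1}. The one cosmetic difference is in the boundary-limit step of~(ii): you split as
$$\int_X p^{\LV}_t(x,y)\bigl(u_s(y)-u_s(x)\bigr)\,\d\mu(y)+u_s(x)\bigl(e^{-t\sqrt{\LV}}1(x)-1\bigr),$$
whereas the paper uses the conservative property of $\L$ (not $\LV$) to write the correction term as $\int_X \bigl(p_t^{\LV}-p_t^{\L}\bigr)(x,y)\,\d\mu(y)\,u_s(x)$, and further splits the first integral into a local ball $B(x,s/32)$ and its complement. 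The two decompositions are algebraically identical; for your version you should estimate $1-e^{-t\sqrt{\LV}}1(x)=\int_X\bigl(p_t^{\L}-p_t^{\LV}\bigr)(x,y)\,\d\mu(y)\le C(t/\rho(x))^{\min\{1/2,\,2-Q/q\}}$ directly from the perturbation bound of Lemma~\ref{lem-1}(iii) rather than via the FTC with $t\partial_t e^{-t\sqrt{\LV}}1$, which presupposes the limit you are establishing; with that small fix the argument is sound and agrees with the paper.
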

\begin{proof}
(i) 
By Lemma \ref{mean}, 
we arrive at
\begin{align*}
|u(x,t)|
&\le C\fint_{t/2}^{3t/2}\fint_{B(x,t)}|u(y,s)|\d\mu(y)\d s\lf(1+\frac{t}{\rho(x)}\r)^{-N}\\
&\le C\fint_{t/2}^{3t/2}\lf(\fint_{B(x,t)}|u(y,s)|^p\d\oz(y)\r)^{1/p}\lf(1+\frac{t}{\rho(x)}\r)^{\theta}\d s \lf(1+\frac{t}{\rho(x)}\r)^{-N}\\
&\le C\oz(B(x,t))^{\frac{\lz-1}{p}}\|u\|_{L^\fz(L^{p,\lz}_\tau)(\d\oz\d t)},
\end{align*}
where the third inequality  is a direct consequence of \eqref{q2}.

(ii) {\bf Step 1:} the Poisson integral is well defined. It follows from Lemma \ref{lem0} and \eqref{critical} that
\begin{align*}
\left|e^{-t\sqrt{\LV}}u_s(x)\right|
&\le C\int_X p^\LV_t(x,y) \oz(B(x,s))^{\frac{\lz-1}{p}}\lf(\frac{\oz(B(y,s+d(x,y)))}{\oz(B(y,s))}\r)^{\frac{1-\lz}{p}}\d\mu(y)\|u\|_{L^\fz(L^{p,\lz}_\tau)(\d\oz\d t)} \\
&\le C \oz(B(x,s))^{\frac{\lz-1}{p}}\int_X p^\LV_t(x,y)\lf(1+\frac{d(x,y)}{s}\r)^Q\lf(1+\frac{s+d(x,y)}{\rho(x)}\r)^{(k_0+1)\theta}\d\mu(y)
  \|u\|_{L^\fz(L^{p,\lz}_\tau)(\d\oz\d t)} \\
&= C \oz(B(x,s))^{\frac{\lz-1}{p}}\lf\{\int_{d(y,x)<\min\{t,s\}}+\int_{s\le d(y,x)<t}+\int_{t\le d(y,x)<s}+\int_{d(y,x)\ge\max\{t,s\}}\r\} \cdots .
\end{align*}
We only consider the last global term since other terms can be handle in a similar way.
The last global term can be majorized by
$$\int_{d(y,x)\ge\max\{t,s\}}\frac{t}{d(x, y)\mu(B(x,d(x, y)))}\lf(1+\frac{d(x,y)}{\rho(x)}\r)^{-N+(k_0+1)\theta}\lf(1+\frac{d(x,y)}{s}\r)^Q\d\mu(y)
\le C\left(\frac{\rho(x)}{s}\right)^{Q}.$$


{\bf Step 2:} the limiting behavior.
It remains to show
$$\lim_{t\to0}e^{-t\sqrt{\LV}}u_s(x)=u_s(x).$$
To this end, note by the conservative property that
\begin{align*}
\left|e^{-t\sqrt{\LV}}u_s(x)-u_s(x)\right|
&\le \int_{B(x,s/32)}p^\LV_t(x,y)|u_s(y)-u_s(x)|\d\mu(y)+\int_{X\setminus B(x,s/32)}p^\LV_t(x,y)|u_s(y)-u_s(x)|\d\mu(y) \\
&\ +\int_{X}\left|p^\LV_t(x,y)-p^\L_t(x,y)\right|\d\mu(y)|u_s(x)|.
\end{align*}

{\bf Substep 2.1:} the limiting behavior of the first term. From the H\"{o}lder regularity of $u$ (see Theorem \ref{holder1}), we deduce that
\begin{align*}
\int_{B(x,s/32)}p^\LV_t(x,y)|u_s(y)-u_s(x)|\d\mu(y)
&\le C\int_{B(x,s/8)}p^\LV_t(x,y)\lf(\frac{d(x,y)}{s}\r)^\az\d\mu(y)\sup_{B(x,s/4)\times(3s/4,5s/4)}|u| \\
& \le C\lf(\frac{t}{s}\r)^\az\oz(B(x,s))^{\frac{\lz-1}{p}}\|u\|_{L^\fz(L^{p,\lz}_\tau)(\d\oz\d t)}\to0,\quad t\to0.
\end{align*}

{\bf Substep 2.2:} the limiting behavior of the second term.
There holds by $(PUB)$ and (i) that
\begin{align*}
\int_{X\setminus B(x,s/32)}p^\LV_t(x,y)|u_s(y)-u_s(x)|\d\mu(y)
& \le \sum_{k=-4}^\fz\int_{B(x,2^ks)\setminus B(x,2^{k-1}s)}p^\LV_t(x,y)|u_s(y)|\d\mu(y)+C\lf(\frac ts\r) |u_s(x)|  \\
& \le C\lf(\frac ts\r)\sum_{k=-4}^\fz 2^{-k}\lf(1+\frac{2^ks}{\rho(x)}\r)^{-N}\fint_{B(x,2^ks)}|u_s|\d\mu+C\lf(\frac ts\r) |u(x,s)| \\
& \le C\lf(\frac ts\r)\oz(B(x,s))^{\frac{\lz-1}{p}}\|u\|_{L^\fz(L^{p,\lz}_\tau)(\d\oz\d t)} \to0,\quad t\to0.
\end{align*}

{\bf Substep 2.3:} the limiting behavior of the last term.
By the perturbation estimate of the Poisson kernel from Lemma \ref{lem-1},
we arrive at
$$
 \int_{X}\left|p^\LV_t(x,y)-p^\L_t(x,y)\right|\d\mu(y)|u_s(x)|
\le C\lf(\frac{t}{\rho(x)}\r)^{\min\{1/2,2-Q/q\}}|u_s(x)|\to0,\quad t\to0.
$$

{\bf Step 3:} collect all terms.
Finally, one has
\begin{align*}
\lim_{t\to0}e^{-t\sqrt{\LV}}u_s(x)
&=\lim_{t\to0}\int_{B(x,s/32)}p^\LV_t(x,y)(u_s(y)-u_s(x))\d\mu(y)\\
&\ +\lim_{t\to0}\int_{X\setminus B(x,s/32)}p^\LV_t(x,y)(u_s(y)-u_s(x))\d\mu(y) \\
&\ +\lim_{t\to0}\int_{X}(p^\LV_t(x,y)-p^\L_t(x,y))\d\mu(y)u_s(x)+u_s(x)=u_s(x).
\end{align*}

(iii) Define
$$H(x,t)=u(x,t+s)-e^{-t\sqrt{\LV}} u_s(x).$$
First note that $H$ is $\mathbb{L}$-harmonic on $X\times\rr_+$ with
$$H(x,0)=\lim_{t\to0}H(x,t)=\lim_{t\to0}u(x,t+s)-\lim_{t\to0}e^{-t\sqrt{\LV}} u_s(x)=u(x,s)- u_s(x)=0$$
by the H\"{o}lder continuity of $u$ (see Theorem \ref{holder}).

We claim that $H$ has polynomial growth at most.
Indeed, it follows from Lemma \ref{lem0} and \eqref{critical} that
\begin{align*}
\oz(B(x,s))^{\frac{\lz-1}{p}}
&\le C\oz(B(x_0,s))^{\frac{\lz-1}{p}} \lf(1+\frac{d(x,x_0)}{s}\r)^Q\lf(1+\frac{s+d(x,x_0)}{\rho(x_0)}\r)^{(k_0+1)\theta} \\
&\le C(x_0,s) (1+d(x,x_0))^{Q+(k_0+1)\theta},
\end{align*}
and similarly
\begin{align*}
\lf(1+\frac{s}{\rho(x)}\r)^{(k_0+1)\theta}+\lf(\frac{\rho(x)}{s}\r)^Q
& \le C\lf(1+\frac{s+d(x,x_0)}{\rho(x_0)}\r)^{(k_0+1)^2\theta}+C\lf[\frac{\rho(x_0)}{s}\lf(1+\frac{d(x,x_0)}{\rho(x_0)}\r)^{\frac{k_0}{k_0+1}}\r]^Q \\
& \le C(x_0,s)\lf(1+d(x,x_0)\r)^{(k_0+1)^2\theta+Q}.
\end{align*}
With two estimates above in hand, we obtain that
$$
|H(x,t)|
\le C(x_0,s) \lf(1+d(x,x_0)\r)^{2Q+2(k_0+1)^2\theta}\|u\|_{L^\fz(L^{p,\lz}_\tau)(\d\oz\d t)},
$$
which indicates the $\mathbb{L}$-harmonic function $H$ has the polynomial growth at most.

Finally
the Liouville theorem (see Theorem \ref{Liouville1}) tells us that there is no $\mathbb{L}$-harmonic function (with the polynomial growth) on $X\times\rr_+$
other than zero. This means
$$u(x,t+s)-e^{-t\sqrt{\LV}}u_s(x)=H(x,t)= 0,$$
which completes the proof.
\end{proof}

With the help of these properties of the $\mathbb{L}$-harmonic function above, we can prove the main result in this subsection.

\begin{proof}[Proof of Theorem \ref{bddmo}]
\textbf{Step 1:} seek the weak-$*$ limit of $\{u_s\}_{s>0}$.
For all $s>0$, by the definition of the ${L^\fz(L^{p,\lz}_\tau)}$-function, we have
\begin{align}\label{eq1}
\|u_s\|_{L^{p,\lz}_\tau(\d\oz)} \le \|u\|_{L^\fz(L^{p,\lz}_\tau)(\d\oz\d t)}.
\end{align}

Next, we will fix a ball $B\subset X$ and
seek a function $f \in {L^{p,\lz}}(X,\d\oz)$ through ${L^{p}}(2^mB,\d\oz)$-boundedness of
$\{u_s\}_{s>0}$ with $m\in \nn$.
 Indeed, for every $m\in \mathbb{N}$, we use (\ref{eq1}) to obtain
$$
	\|u_s\|_{{L^{p}}(2^mB,\d\oz)} 	\leq   \oz(2^mB)^{\lz/p}\lf(1+\frac{2^mr_B}{\rho(x_B)}\r)^{-\tau} \|u\|_{L^\fz(L^{p,\lz}_\tau)(\d\oz\d t)},
$$
which implies that the family $\{u_s\}_{s>0}$ is uniformly bounded in ${L^{p}}(2^{m}B,\d\oz)$.
Thanks to the reflexive of ${L^{p}}(2^{m}B,\d\oz)$, the Eberlein-\v{S}mulian theorem and the diagonal method imply that,
there exist a subsequence $s_k \rightarrow 0$ $(k\rightarrow\infty)$ and a function $g_m \in {L^{p}}(2^{m}B,\d\oz)$
such that $u_{s_k} \rightarrow g_m$ weakly in ${L^{p}}(2^{m}B,\d\oz)$ for each $m \in \mathbb{N}$.
Now we define a function $f$ by
$$
f(x)=g_m(x)\mathbbm{1}_{2^{m}B}(x),\quad m=1,2,\cdots.
$$
It is easy to check that $f$ is well defined on $X=\bigcup_{m=1}^{\infty} 2^{m}B$, and
$$
\|f\|_{L^{p,\lz}_\tau(\d\oz)} \le \|u\|_{L^\fz(L^{p,\lz}_\tau)(\d\oz\d t)}.
$$

\textbf{Step 2:} the existence of the trace.
Let us verify that
$$u(x,t)= e^{-t\sqrt{\LV}} f(x).$$
Since $u$ is H\"{o}lder continuous (see Theorem \ref{holder1}),
we deduce from Lemma \ref{lem1} (iii) that
$$
u(x,t)=\lim_{k\to\fz}u(x,t+s_k)=\lim_{k\to\fz} e^{-t\sqrt{\LV}}  u_{s_k}(x).
$$
It reduces to show that
$$
	\lim_{k \rightarrow \infty} e^{-t\sqrt{\LV}}  u_{s_k}(x)= e^{-t\sqrt{\LV}} f(x).
$$
Indeed, for each $\ell \in \mathbb{N},$ one writes
$$
	 e^{-t\sqrt{\LV}}  u_{s_k}(x)
	=\int_{B(x, 2^{\ell} t)} p^\LV_{t}(x, y) u_{s_k} (y) \d\mu(y)
	+\int_{X\setminus B(x, 2^{\ell} t)} p^\LV_{t}(x, y) u_{s_k}(y) \d\mu(y).
$$
On the one hand, using $(PUB)$ and \eqref{q2}, we arrive at
\begin{align*}
\left|\int_{X\setminus B(x, 2^{\ell} t)} p^\LV_{t}(x, y) u_{s_k}(y) \d\mu(y)\right|
&\leq C \sum_{i=\ell+1}^{\infty} 2^{-i}\lf(1+\frac{2^{i}}{\rho(x)}\r)^{-N} \lf(\fint_{B(x,2^{i}t)} |u_{s_k}| \d\oz\r)^{1/p}\lf(1+\frac{2^it}{\rho(x)}\r)^{\theta} \\
&\leq C \sum_{i=\ell+1}^{\infty} 2^{-i}  \oz(B(x,2^it))^{\frac{\lz-1}{p}} \|u_{s_k}\|_{L^{p,\lz}_{\tau}(\d\oz)}
 \lf(1+\frac{2^{i}}{\rho(x)}\r)^{-N} \lf(1+\frac{2^it}{\rho(x)}\r)^{\theta-\tau} \\
&\le C  2^{- \ell} \oz(B(x,t))^{\frac{\lz-1}{p}}\|u\|_{L^\fz(L^{p,\lz}_\tau)(\d\oz\d t)}.
\end{align*}
This fact yields
$$
	0
	\leq
	\lim _{\ell \rightarrow\infty} \lim _{k \rightarrow\infty}
	\left| \int_{X\setminus B(x, 2^{\ell} t)} p^\LV_{t}(x, y) u_{s_k}(y) \d\mu(y)\right|
	\leq	C\lim _{\ell \rightarrow\infty}   2^{- \ell}  \oz(B(x,t))^{\frac{\lz-1}{p}}\|u\|_{L^\fz(L^{p,\lz}_\tau)(\d\oz\d t)}	 =0.
$$
On the other hand, note by $(PUB)$ and the definition of $A_p^{\rho,\theta}(X)$ that 
\begin{align*}
\int_{B(x,2^\ell t)}\lf(\frac{p^\LV_t(x,y)}{\oz(y)}\r)^{p'}\d\oz(y)
&\le C\sum_{i=0}^\ell\frac{2^{-ip'}}{\mu(B(x,2^it))^{p'}}\lf(1+\frac{2^i t}{\rho(x)}\r)^{-N p'} \int_{B(x,2^i t)}\oz^{-\frac{1}{p-1}}\d\mu\\
&\le \sum_{i=0}^\ell2^{-ip'}{\oz(B(x,2^it))^{-\frac{1}{p-1}}}\lf(\fint_{B(x,2^i t)}\oz\d\mu\r)^{-\frac{1}{p-1}}
\le C{\oz(B(x,t))^{-\frac{1}{p-1}}},
\end{align*}
which implies that
$$
\lim _{k \rightarrow\infty} \int_{B(x, 2^{\ell} t)} p^\LV_{t}(x, y) u_{s_k} (y) \d\mu(y)
=\lim _{k \rightarrow\infty}\int_{B(x, 2^{\ell} t)} \frac{p_t^\LV(x, y)}{\oz(y)} u_{s_k}(y)\d \oz(y)=\int_{B(x, 2^{\ell} t)} p_t^\LV(x, y) f(y)\d\mu(y).
$$
Therefore, combine the local part and the global part to obtain
\begin{align*}
 \lim_{k \rightarrow \infty} e^{-t\sqrt{\LV}}  u_{s_k}(x)
&=\lim _{\ell \rightarrow\infty} \lim _{k \rightarrow\infty} \int_{B(x, 2^{\ell} t)} p^\LV_{t}(x, y) u_{s_k} (y) \d\mu(y)
	+\lim _{\ell \rightarrow\infty} \lim _{k \rightarrow\infty} \int_{X\setminus B(x, 2^{\ell} t)} p^\LV_{t}(x, y) u_{s_k}(y) \d\mu(y)\\
&=\lim _{\ell \rightarrow\infty} \int_{B(x, 2^{\ell} t)} p^\LV_{t}(x, y) f (y) \d\mu(y)= e^{-t\sqrt{\LV}} f(x)
\end{align*}
as desired.

{\bf Step 3:} completion of the proof.
Based on the above argument, we derive that every weighted harmonic Morrey function $u$
 can be represented as $u=e^{-t\sqrt{\LV}}f$ for some weighted Morrey function $f$, and its trace admits
$$
\|f\|_{L^{p,\lz}_\tau(\d\oz)} \le \|u\|_{L^\fz(L^{p,\lz}_\tau)(\d\oz\d t)}
$$
for each $\tau\in\rr$.
This completes the proof.
\end{proof}

\subsection{From trace to solution}
\hskip\parindent
In this subsection we complete the proof of Theorem \ref{thm3} by showing that
the Poisson integral of a weighted Morrey function is a still a weighted Morrey function with respect to the time variable uniformly.

\begin{theorem}\label{moo}
Let $(X,d,\mu,\mathscr{E})$ be a complete Dirichlet metric measure space satisfying a doubling property $(D)$ and admitting an $L^2$-Poincar\'{e} inequality $(P_2)$.
Suppose that $0\le V\in  A_\infty(X)\cap RH_{q}(X)$ for some $q>\max\{Q/2,1\}$.
For any $1<p<\fz$, $0<\lz<1$ and $\theta>0$,
if $f\in L^{p,\lz}(X,\d\oz)$ with $\oz\in A_p^{\rho,\theta}(X)$, then $u=e^{-t\sqrt{\LV}}f\in L^\fz(L^{p,\lz})(X\times \rr_+,\d\oz\d t)$.
Moreover, for any $\tau\in\rr$, it holds that
$$ \|u\|_{L^\fz(L^{p,\lz}_\tau)(\d\oz\d t)}\le C\|f\|_{L^{p,\lz}_{\tau+\lz\theta}(\d\oz)}.$$
\end{theorem}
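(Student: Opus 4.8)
The plan is to reduce everything to a single uniform estimate: for an arbitrary ball $B=B(x_B,r_B)\subset X$, an arbitrary $t>0$, and any $\tau\in\rr$, I would show that $\mathcal N_B(t):=\oz(B)^{-\lz/p}\,\|u(\cdot,t)\|_{L^p(B,\d\oz)}\,(1+r_B/\rho(x_B))^{\tau}\le C\,\|f\|_{L^{p,\lz}_{\tau+\lz\theta}(\d\oz)}$, with $C$ independent of $B$ and $t$; taking the supremum over $B$ then gives $\|u(\cdot,t)\|_{L^{p,\lz}_\tau(\d\oz)}\le C\|f\|_{L^{p,\lz}_{\tau+\lz\theta}(\d\oz)}$ for every $t>0$, hence the conclusion (choosing $\tau$ so that $f\in L^{p,\lz}_{\tau+\lz\theta}(\d\oz)$ yields membership of $u$ in $L^\fz(L^{p,\lz})$). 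That $u=e^{-t\sqrt{\LV}}f$ is $\mathbb L$-harmonic on $X\times\rr_+$ is standard from the subordination formula and spectral calculus, so only the size bound is at issue; the estimates below will in particular justify that $\int_X p^{\LV}_t(x,y)f(y)\,\d\mu(y)$ converges absolutely (note $f$ is locally in $L^p(\d\oz)$, hence locally in $L^1(\d\mu)$ since $\oz\in A_p^{\rho}$). Fix such $B$ and $t$, set $r_*:=\max\{r_B,t\}$ and $B_*:=B(x_B,r_*)\supseteq B$, and split $f=f\mathbbm{1}_{2B_*}+\sum_{j\ge1}f\mathbbm{1}_{2^{j+1}B_*\setminus 2^jB_*}=:f_0+\sum_{j\ge1}f_j$, giving a corresponding decomposition $u(x,t)=u_0(x,t)+\sum_{j\ge1}u_j(x,t)$.

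For the local piece $u_0$ I would argue by cases. If $t>r_B$ (so $r_*=t$), then for $x\in B$ and $y\in 2B_*$ one has $d(x,y)<3t$ and $\mu(B(x,t+d(x,y)))\gtrsim\mu(2B_*)$ by doubling, so $(PUB)$ together with \eqref{critical} (to pass from $\rho(x)$ to $\rho(x_B)$) gives the pointwise bound $|u_0(x,t)|\ls(1+r_*/\rho(x_B))^{-N}\,\mu(2B_*)^{-1}\int_{2B_*}|f|\,\d\mu$, and then \eqref{q2} converts the unweighted average into $(1+r_*/\rho(x_B))^{\theta}\,\oz(2B_*)^{(\lz-1)/p}(1+r_*/\rho(x_B))^{-(\tau+\lz\theta)}\|f\|_{L^{p,\lz}_{\tau+\lz\theta}(\d\oz)}$. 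If instead $t\le r_B$ (so $B_*=B$) there is no gain in $t$, but $|u_0(\cdot,t)|$ is dominated on $B$ by a variant, $\rho$-localized Hardy--Littlewood maximal function of $f\mathbbm{1}_{2B}$, which is bounded on $L^p(\d\oz)$ since $\oz\in A_p^{\rho}$, so $\|u_0(\cdot,t)\|_{L^p(B,\d\oz)}\ls\|f\|_{L^p(2B,\d\oz)}\ls\oz(2B)^{\lz/p}(1+r_B/\rho(x_B))^{-(\tau+\lz\theta)}\|f\|_{L^{p,\lz}_{\tau+\lz\theta}(\d\oz)}$. Feeding either bound into $\mathcal N_B(t)$: the ratio $\oz(B)^{-\lz/p}\oz(2B_*)^{\lz/p}$ in the second case, resp. $(\oz(B)/\oz(2B_*))^{(1-\lz)/p}\le 1$ in the first (using $B\subseteq 2B_*$ and $0<\lz<1$), is controlled via Lemma \ref{lem0}(iv), and the surviving powers of $(1+r_*/\rho(x_B))$ collapse: the $\Psi_\theta$-factor $(1+r_*/\rho(x_B))^{\theta}$ produced above is exactly absorbed by the shift $\tau\mapsto\tau+\lz\theta$ in the norm of $f$, together with $(1+r_B/\rho(x_B))^{\tau}\le(1+r_*/\rho(x_B))^{|\tau|}$ (valid for both signs of $\tau$), provided $N$ is taken large, say $N>\theta(1+|\tau|)$.

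For the far pieces $u_j$, $j\ge1$, I would note that for $x\in B$ and $y\in 2^{j+1}B_*\setminus 2^jB_*$ one has $d(x,y)\approx 2^jr_*$, so $(PUB)$ yields $p^{\LV}_t(x,y)\ls\frac{t}{t+2^jr_*}\,\mu(2^jB_*)^{-1}(1+2^jr_*/\rho(x_B))^{-N}$, and here the decisive gain $\frac{t}{t+2^jr_*}\le t/(2^jr_*)\le 2^{-j}$ uses precisely that $r_*\ge t$ --- this is why the cutoff scale must be $\max\{r_B,t\}$ rather than $r_B$. Thus, uniformly in $x\in B$, $|u_j(x,t)|\ls 2^{-j}(1+2^jr_*/\rho(x_B))^{-N}\mu(2^{j+1}B_*)^{-1}\int_{2^{j+1}B_*}|f|\,\d\mu$, and \eqref{q2} upgrades this to $2^{-j}(1+2^jr_*/\rho(x_B))^{-N+\theta}\oz(2^{j+1}B_*)^{(\lz-1)/p}(1+2^{j+1}r_*/\rho(x_B))^{-(\tau+\lz\theta)}\|f\|_{L^{p,\lz}_{\tau+\lz\theta}(\d\oz)}$. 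Summing the $L^p(B,\d\oz)$-norms by Minkowski's inequality and inserting into $\mathcal N_B(t)$, the $j$-th term carries the harmless factor $(\oz(B)/\oz(2^{j+1}B_*))^{(1-\lz)/p}\le 1$ and, after absorbing $(1+r_B/\rho(x_B))^{\tau}$ as in the previous paragraph, a power of $(1+2^jr_*/\rho(x_B))$ not exceeding $-N+\theta$; choosing $N$ large makes $\sum_{j\ge1}2^{-j}(1+2^jr_*/\rho(x_B))^{-N+\theta}$ converge. Combining the local and far bounds gives $\mathcal N_B(t)\le C\|f\|_{L^{p,\lz}_{\tau+\lz\theta}(\d\oz)}$.

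The main obstacle --- and the reason the argument must depart from the classical Stein--Weiss scheme --- is the failure of the conservation property: since $e^{-t\sqrt{\LV}}1\not\equiv 1$ one cannot invoke a Young-type inequality, and the natural bookkeeping produces $\Psi_\theta$-type factors that grow in $t$ and must be killed by the polynomial $\rho$-decay built into $(PUB)$, whose exponent $N$ is fortunately arbitrary. Keeping these $\rho$-powers aligned with the scale mismatch between $\oz(B)$ and $\oz(2^jB_*)$ across the two time regimes, and making the $\tau$-bookkeeping close for both signs of $\tau$ and for the shift $\tau\mapsto\tau+\lz\theta$, is the delicate point; once assembled in the right order, the remaining ingredients --- the weighted boundedness of the localized/variant maximal operator, \eqref{q2}, Lemma \ref{lem0}(iv), the doubling property, and \eqref{critical} --- are routine.
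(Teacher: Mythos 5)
Your proposal follows the same approach as the paper's proof: decompose $f$ into dyadic annuli, control the local piece for small $t$ via the $\rho$-localized maximal function $M_N$ on $L^p(\d\oz)$, control the remaining pieces pointwise via $(PUB)$ exploiting the gain $t/(t+d(x,y))$ and the arbitrary $\rho$-decay exponent $N$, and convert unweighted averages to weighted ones via \eqref{q2}, \eqref{critical}, and Lemma \ref{lem0}. The only cosmetic difference is that you work uniformly at the scale $r_*=\max\{r_B,t\}$, whereas the paper splits explicitly into the cases $t<r_B$ (decomposing around $B$) and $t\ge r_B$ (summing annuli at scale $2^kt$); the underlying estimates are identical.
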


\begin{proof}
Fix a ball $B=B(x_B,r_B)\subset X$. Let us verify
$$
\lf(\frac{1}{\oz(B)^\lz}\int_B \big| e^{-t\sqrt{\LV}} f\big|^p\d\oz\r)^{1/p}\le C\lf(1+\frac{r_B}{\rho(x_B)}\r)^{-\tau}\|f\|_{L^{p,\lz}_{\tau+\lz\theta}(\d\oz)}.
$$

To this end, we consider two cases: the small time and the large time.

{\bf Case 1:}  the small time $t<r_B$.
One can write
$$
f=f\mathbbm{1}_{2B}+u\mathbbm{1}_{4B\setminus2B}+\cdots+f\mathbbm{1}_{2^kB\setminus2^{k-1}B}+\cdots=\sum_{k=1}^\fz f_k.
$$
For the first term $f_1$, we deduce from $(PUB)$ that
$$
\big| e^{-t\sqrt{\LV}}  f_1(x)\big|
\le C\sum_{i=0}^\fz2^{-i} \lf(1+\frac{2^it}{\rho(x)}\r)^{-N}\fint_{B(x,2^it)}|f_1|\d\mu
\le C M_Nf_1(x),
$$
where $M_Nf_1$ is the variant Hardy--Littlewood maximal function of $f_1$ defined by
$$M_Nf_1(x)=\sup_{r>0}\lf(1+\frac{r}{\rho(x)}\r)^{-N}\fint_{B(x,r)}|f_1|\d\mu.$$
Therefore we employ the weighted $L^p$-boundedness of $M_N$ (see \cite[Proposition 2.7]{BBD2022} with $N=\theta/(p-1)$)
to obtain
\begin{align*}
\lf(\frac{1}{\oz(B)^\lz}\int_B\big | e^{-t\sqrt{\LV}} f_1\big|^p\d\oz\r)^{1/p}
&\le C[\oz]^{\frac{1}{p-1}}_{A_p^{\rho,\theta}}\lf(\frac{1}{\oz(B)^\lz}\int_{2B} |f|^p\d\oz\r)^{1/p} \\
&\le C \lf(1+\frac{2r_B}{\rho(x_B)}\r)^{-\tau-\lz\theta}\lf(\frac{\oz(2B)}{\oz(B)}\r)^{\lz/p}\|f\|_{L^{p,\lz}_{\tau+\lz\theta}(\d\oz)}
\le C \lf(1+\frac{r_B}{\rho(x_B)}\r)^{-\tau}\|f\|_{L^{p,\lz}_{\tau+\lz\theta}(\d\oz)}.
\end{align*}
To estimate the term $f_k$, one may invoke $(PUB)$, \eqref{critical} and \eqref{q2} to deduce that for each $x\in B$,
\begin{align*}
\big| e^{-t\sqrt{\LV}}  f_k(x)\big|
&\le C\lf(\frac{t}{2^{k}r_B}\r)\lf(1+\frac{2^kr_B}{\rho(x)}\r)^{-N}\fint_{2^kB}|f|\d\mu \\
&\le C2^{-k}\lf(1+\frac{2^kr_B}{\rho(x_B)}\r)^{-\frac{N}{k_0+1}}\lf(\fint_{2^kB}|f|^p\d\oz\r)^{1/p}\lf(1+\frac{2^kr_B}{\rho(x_B)}\r)^\theta
\le C 2^{- k} \oz(B)^{\frac{\lz-1}{p}}\lf(1+\frac{r_B}{\rho(x_B)}\r)^{-\tau}\|f\|_{L^{p,\lz}_{\tau+\lz\theta}(\d\oz)},
\end{align*}
which implies that
$$
\lf(\frac{1}{\oz(B)^\lz}\int_B \big| e^{-t\sqrt{\LV}} f_k\big|^p\d\oz\r)^{1/p}\le C 2^{- k}\lf(1+\frac{r_B}{\rho(x_B)}\r)^{-\tau}\|f\|_{L^{p,\lz}_{\tau+\lz\theta}(\d\oz)}.
$$

Summing over $k$ leads to
$$
\lf(\frac{1}{\oz(B)^\lz}\int_B \big| e^{-t\sqrt{\LV}} f\big|^p\d\oz)\r)^{1/p}
\le C \sum_{k=1}^\fz2^{- k} \lf(1+\frac{r_B}{\rho(x_B)}\r)^{-\tau}\|f\|_{L^{p,\lz}_{\tau+\lz\theta}(\d\oz)}
\le C \lf(1+\frac{r_B}{\rho(x_B)}\r)^{-\tau}\|f\|_{L^{p,\lz}_{\tau+\lz\theta}(\d\oz)}
$$
as desired.

{\bf Case 2:}  the large time $t\ge r_B$.
In this case, we may employ $(PUB)$, \eqref{critical} and \eqref{q2}
to obtain that for each $x\in B\subset B(x_B,t)$,
$$
\big| e^{-t\sqrt{\LV}}  f(x)\big|
\le C\sum_{k=1}^\fz2^{- k}\lf(1+\frac{2^kt}{\rho(x_B)}\r)^{-\frac{N}{k_0+1}} \lf(\fint_{B(x_B,2^kt)}|f|^p\d\oz\r)^{1/p}\lf(1+\frac{2^kt}{\rho(x_B)}\r)^\theta
\le C \oz(B)^{\frac{\lz-1}{p}}\lf(1+\frac{r_B}{\rho(x_B)}\r)^{-\tau}\|f\|_{L^{p,\lz}_{\tau+\lz\theta}(\d\oz)},
$$
which yields
$$
\lf(\frac{1}{\oz(B)^\lz}\int_B \big| e^{-t\sqrt{\LV}} f\big|^p\d\oz\r)^{1/p}\le C\lf(1+\frac{r_B}{\rho(x_B)}\r)^{-\tau}\|f\|_{L^{p,\lz}_{\tau+\lz\theta}(\d\oz)}.
$$

Finally, based on the above argument in Cases 1 and 2, we derive
$$
\|u\|_{L^\fz(L^{p,\lz}_\tau)(\d\oz\d t)}
=\sup_{t>0}\lf(\frac{1}{\oz(B)^\lz}\int_B  \big | e^{-t\sqrt{\LV}} f\big|^p\d\oz\r)^{1/p}\lf(1+\frac{r_B}{\rho(x_B)}\r)^{\tau}
\le C\|f\|_{L^{p,\lz}_{\tau+\lz\theta}(\d\oz)}
$$
as desired. This completes the proof.
\end{proof}

\subsection{Final remarks}\label{ss1}
\hskip\parindent
This section will end by pointing out some further results. Since it is intended solely as a brief
review and not as a rigorous development, the pertinent results are stated without proof.

There are some points about previous articles \cite{SL2022,STY2018} need to be mentioned first.
\begin{enumerate}
\item[\rm{(i)}] One is the $\lz$'s range of the Morrey function in \cite{SL2022}.
Due to the complex structure of the metric measure space, the authors assumed $1-1/Q<\lz<1$ in \cite[Theorem 3]{SL2022} to ensure their conclusion holds.
In fact, with the help of $(PUB)$, we can relax the lower bound of $\lz$ to 0.

\item[\rm{(ii)}] The other is the definition of the Morrey space.
Note that the Morrey function in \cite{STY2018} is a special case of our unweighted Morrey function in the Euclidean setting, i.e., $\oz=1$ and $\tau=0$ in Definition \ref{def1}.
In \cite{STY2018}, Song--Tian--Yan obtained that an $\mathbb{L}$-harmonic function on $\rr^{n+1}_+$ satisfies the Carleson measure conditioin
if and only if it can be represented as the Poisson integral with Morrey trace.
In the proof of \cite[Theorem 1.1 (i)]{STY2018},
the authors utilized the full gradient estimates on the kernel of the Poisson semigroup related to $V$.
We remark that, since $V\in RH_q(\rn)$ for some $q>n$, the Poisson kernel associated with $L=-\Delta+V$
is majorized by the classical one with an additional polynomial decay, namely, it holds
\begin{align}\label{q3}
|t\nabla_x p^\LV_t(x,y)|\le C\frac{t}{(t+|x-y|)^{n+1}} \left(1+\frac{t+|x-y|}{\rho(x)}\right)^{-N}.
\end{align}
However, in the proof of \cite[Theorem 2]{SL2022} (or see \cite[Theorem 1.1 (i)]{STY2018}), the authors merely utilized the classical Poisson upper bound.
The additional polynomial decay related to $V$ in \eqref{q3} is \textit{not} used in their arguments.
Even if $V\ge0$ merely, \cite[Theorem 1.1 (i)]{STY2018} and \cite[Theorem 2]{SL2022} still valid.
Therefore the Morrey space $L^{2,\lz}_0(X)$ is not an ``optimal'' initial value class.

\item[\rm{(iii)}] The last one is the theory of the function space associated to the operator plays a crucial role in the argument of \cite{STY2018}.
Song--Tian--Yan wrote in \cite[Abstract]{STY2018}

``\textit{Its proof heavily relies on investigate the intrinsic relationship
 between the classical Morrey spaces and the new Campanato spaces associated to the operator} $L$'',

and invested significant time and effort in \cite[Section 2]{STY2018} to establish
$$L^{2,\lz}_0(\rn)=\mathscr{L}^{2,\lz}_{L}(\rn),$$
where the new Campanato space $\mathscr{L}^{2,\lz}_{L}(\rn)$ is defined to be the set of all $f\in L^2(\rn,(1+|x|)^{-n-\bz}\d x)$ with $\bz>0$ such that their norms
$$
\|f\|_{\mathscr{L}^{2,\lz}_{L}}=\sup_B \lf(\frac{1}{|B|^\lz}\int_B \big |f(x)-e^{-r_B\sqrt{\LV}}f(x)\big|^2 \d x\r)^{1/2}<\fz.
$$
By the aid of a new Calder\'{o}n reproducing formula from \cite[Lemma 5]{SL2022} (which is different from that of \cite{STY2018}),
we can get rid of the new Campanato space $\mathscr{L}^{2,\lz}_{L}(\rn)$ above.
\end{enumerate}

Inspired by Theorem \ref{thm3} and these remarks above, we define a new solution space as follow.
\begin{definition}
An $\mathbb{L}$-harmonic function $u$ defined on $X\times \rr_+$
is said to be in $\mathrm{HMO}^{2,\lz}_\tau=\mathrm{HMO}^{2,\lz}_\tau(X\times \rr_+)$ with $0<\lz<1$ and $\tau\in\rr$,
the {space of functions of variant harmonic mean oscillation},
if $u$ satisfies
$$
\|u\|_{\HMO^{2,\lz}_\tau}=\sup_{B} \lf(\frac{1}{\mu(B)^\lz}\int_0^{r_B}\int_B  |t\nabla_{x,t} u|^2\d \mu\frac{\d t}{t}\r)^{1/2}\lf(1+\frac{r_B}{\rho(x_B)}\r)^\tau<\fz.
$$
\end{definition}

With the help of the variant harmonic mean oscillation function above,
we can improve all results of \cite{SL2022,STY2018} to the following theorem.
But to limit the length of this article, we leave the detail to the interested reader.
\begin{theorem}\label{thm4}
Let $(X,d,\mu,\mathscr{E})$ be a complete Dirichlet metric measure space satisfying a doubling property $(D)$ and admitting an $L^2$-Poincar\'{e} inequality $(P_2)$.
Suppose that $0\le V\in A_\infty(X)\cap RH_{q}(X)$ for some $q> \max\{Q/2,1\}$.
If $0<\lz<1$ and $\tau\in\rr$, then $u\in \mathrm{HMO}^{2,\lz}_\tau(X\times \rr_+)$ if and only if $u=e^{-t\sqrt{\LV}}f$ for some $f\in L^{2,\lz}_\tau(X)$.
Moreover, it holds that 
$$\|u\|_{\mathrm{HMO}^{2,\lz}_\tau} \approx\|f\|_{L^{2,\lz}_\tau}.$$
\end{theorem}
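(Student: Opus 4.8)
The plan is to establish the two inclusions separately, mirroring the proofs of Theorems~\ref{thm1} and~\ref{bddmo}: the statement is essentially the ``Morrey--Carleson'' analogue of the correspondence between $\BMO_\LV(X)$ and Carleson measures. The recurring mechanism that makes the Morrey bookkeeping work is that $\mu(2^kB)^{\lz-1}$ decays geometrically in $k$ (here $0<\lz<1$ and $\mu$ enjoys the reverse doubling property), which plays the role of the $k2^{-k}$ factors in the $\CMO_\LV$ case of Theorems~\ref{hcmo} and~\ref{cmo}. As before, the critical index $q>\max\{Q/2,1\}$ enters only through the H\"older continuity (Theorems~\ref{holder} and~\ref{holder1}), the mean value property (Lemma~\ref{mean}), the Liouville theorems (Theorems~\ref{Liouville} and~\ref{Liouville1}), and the kernel bounds of Section~\ref{estimate-kernel}; granted these, the rest of the argument is insensitive to $q$.

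\emph{From trace to solution.} Given $f\in L^{2,\lz}_\tau(X)$, fix a ball $B=B(x_B,r_B)$ and split $f=(f-f_{2B})\mathbbm{1}_{2B}+(f-f_{2B})\mathbbm{1}_{(2B)^c}+f_{2B}=:g_1+g_2+g_3$, following the pattern of Theorems~\ref{cmo} and~\ref{moo}. For $g_1$, the vertical square function bound $\int_0^\fz\int_X|t\nabla_{x,t}e^{-t\sqrt{\LV}}g_1|^2\d\mu\frac{\d t}{t}\ls\|g_1\|_{L^2(X)}^2$ — which follows from spectral theory together with $\mathscr{E}(h,h)\le\langle\LV h,h\rangle$, so that $\|t\nabla_xe^{-t\sqrt{\LV}}h\|_{L^2(X)}\le\|t\sqrt{\LV}e^{-t\sqrt{\LV}}h\|_{L^2(X)}$ — combined with $\int_{2B}|f-f_{2B}|^2\d\mu\ls\mu(2B)^\lz\|f\|_{L^{2,\lz}_\tau}^2(1+r_B/\rho(x_B))^{-2\tau}$, bounds the $g_1$-contribution by $\mu(B)^\lz\|f\|_{L^{2,\lz}_\tau}^2(1+r_B/\rho(x_B))^{-2\tau}$. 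For $g_2$, the function $u_2:=e^{-t\sqrt{\LV}}g_2$ is $\mathbb{L}$-harmonic on $X\times\rr_+$ with datum vanishing on $2B$; an annular decomposition of $g_2$ over $2^{k+1}B\setminus2^kB$ together with $(PUB)$ and the telescoping control of the averages $f_{2^jB}$ gives, for $x\in B$ and $0<t\le r_B$, the pointwise bound $|u_2(x,t)|\ls(t/r_B)\,\mu(B)^{(\lz-1)/2}\|f\|_{L^{2,\lz}_\tau}(1+r_B/\rho(x_B))^{-\tau}$, the series converging thanks to the geometric factor $\mu(2^kB)^{(\lz-1)/2}$ and the fast decay in $(PUB)$; Caccioppoli's inequality upgrades this to $|t\nabla_{x,t}u_2|$, and the extra $(t/r_B)^2$ makes $\int_0^{r_B}(\cdot)\frac{\d t}{t}$ finite. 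For $g_3$ one writes $e^{-t\sqrt{\LV}}g_3=f_{2B}\,e^{-t\sqrt{\LV}}1$, uses Lemma~\ref{lem-1}(v) for $\int_0^{r_B}\fint_B|t\nabla_{x,t}e^{-t\sqrt{\LV}}1|^2\d\mu\frac{\d t}{t}\ls\min\{(r_B/\rho(x_B))^{2\dz},1\}$, together with $|f_{2B}|^2\le\fint_{2B}|f|^2\ls\mu(2B)^{\lz-1}\|f\|_{L^{2,\lz}_\tau}^2(1+r_B/\rho(x_B))^{-2\tau}$. Summing the three contributions, dividing by $\mu(B)^\lz$ and taking the supremum over $B$ yields $\|u\|_{\mathrm{HMO}^{2,\lz}_\tau}\ls\|f\|_{L^{2,\lz}_\tau}$. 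No Liouville theorem is needed here, only the kernel estimates of Section~\ref{estimate-kernel}.

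\emph{From solution to trace.} Given $u\in\mathrm{HMO}^{2,\lz}_\tau$, I would follow the scheme of Theorem~\ref{thm1} in five steps. (i) \emph{Regularity}: since $\partial_tu$ and $\nabla_xu$ are again $\mathbb{L}$-harmonic, the mean value property (Lemma~\ref{mean}) and the Caccioppoli inequality give, by comparing $\fint_{B((x,t),t/4)}|t\nabla_{x,t}u|^2$ with the Carleson functional $\mathcal{C}_{u,B(x,Ct)}^2$, that $|t\nabla_{x,t}u(x,t)|\ls\mu(B(x,t))^{(\lz-1)/2}\|u\|_{\mathrm{HMO}^{2,\lz}_\tau}(1+t/\rho(x))^{-\tau}$. (ii) \emph{Well-posedness}: this estimate, $(PUB)$ and~\eqref{critical} give the weighted $L^2$-integrability $\int_X\frac{|u(y,s)|^2}{(t+d(x,y))\mu(B(x,t+d(x,y)))}\d\mu(y)<\fz$, so that $e^{-t\sqrt{\LV}}(u(\cdot,s))$ is well defined, and also show that $u$ has at most polynomial growth on $X\times\rr_+$. (iii) \emph{Reproducing identity}: the function $w(x,t)=u(x,t+s)-e^{-t\sqrt{\LV}}(u(\cdot,s))(x)$ is $\mathbb{L}$-harmonic on $X\times\rr_+$ with zero trace (by the H\"older continuity of $u$, Theorem~\ref{holder}) and polynomial growth, hence satisfies the hypothesis of Theorem~\ref{Liouville1}, so $w\equiv0$ and $u(x,t+s)=e^{-t\sqrt{\LV}}(u(\cdot,s))(x)$, exactly as in Lemma~\ref{lem1}(iii). (iv) \emph{Uniform slice bound}: by (iii), after the substitution $t\mapsto t+s$ the vertical square function of $e^{-t\sqrt{\LV}}(u(\cdot,s))$ over a Carleson box is dominated by a truncated vertical square function of $u$, hence is a Morrey--Carleson functional $\ls\|u\|_{\mathrm{HMO}^{2,\lz}_\tau}$ uniformly in $s$; combining this with the converse square function estimate for $L^{2,\lz}_\tau(X)$ — obtained by $L^2$-duality over each ball $B$ against the Calder\'on reproducing formula for $\LV$, splitting the $(y,t)$-integral into the near part, the dyadic annuli $2^{k+1}B\setminus2^kB$, and the region $t\ge r_B$, where once more $\mu(2^kB)^{\lz-1}$ makes the resulting series converge — gives $\sup_{s>0}\|u(\cdot,s)\|_{L^{2,\lz}_\tau}\ls\|u\|_{\mathrm{HMO}^{2,\lz}_\tau}$. (v) \emph{Extraction}: as in Steps~1--2 of Theorem~\ref{bddmo}, the Eberlein--\v{S}mulian theorem and a diagonal argument over $\{2^mB\}$ produce a weak-$*$ limit $f\in L^{2,\lz}_\tau(X)$ of a subsequence $u(\cdot,s_k)$ with $s_k\to0$ and $\|f\|_{L^{2,\lz}_\tau}\ls\|u\|_{\mathrm{HMO}^{2,\lz}_\tau}$; then $u(x,t)=\lim_ku(x,t+s_k)=\lim_ke^{-t\sqrt{\LV}}(u(\cdot,s_k))(x)=e^{-t\sqrt{\LV}}f(x)$, the last equality obtained by passing the weak limit through the Poisson kernel via $(PUB)$ and Lemma~\ref{lem0} exactly as in Theorem~\ref{bddmo}, any residual additive $\LV$-harmonic term being annihilated once more by the Liouville theorem. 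Combined with the first direction, this also yields the asserted equivalence $\|u\|_{\mathrm{HMO}^{2,\lz}_\tau}\approx\|f\|_{L^{2,\lz}_\tau}$.

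\emph{Main obstacle.} The delicate step is (iv): the converse square function estimate for the weighted Morrey space, equivalently the uniform-in-$s$ control of the slices $u(\cdot,s)$. Unlike $\BMO_\LV$ and $\CMO_\LV$, the Morrey norm measures size rather than oscillation, so one cannot subtract averages at will; the dyadic series coming from the Calder\'on reproducing formula must be summed with the precise powers of $\mu(2^kB)$ and of the critical weights $(1+2^kr_B/\rho(x_B))^\tau$, and extra care is needed near the slice height $t=s$, where $\partial_tu$ is interior-regular but only with an $s$-dependent bound. This is precisely where the hypotheses of the theorem — the reverse doubling $n>1$, the critical index $q>\max\{Q/2,1\}$, and the resulting H\"older continuity, mean value property and Liouville theorems of Sections~\ref{ho}--\ref{improved} — are used in full; once they are in hand, the remaining estimates are routine, if lengthy, adaptations of the arguments of Sections~\ref{s4}--\ref{s6}, which is why we content ourselves with this sketch.
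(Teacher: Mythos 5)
The paper does not actually spell out a proof of Theorem~\ref{thm4}; immediately after stating the definition of $\mathrm{HMO}^{2,\lz}_\tau$ it says the details are left to the reader, pointing to \cite{SL2022,STY2018} and to the machinery of Sections~\ref{s4}--\ref{s6}. Your proposal fills in exactly that plan: the trace-to-solution direction via the split $f=(f-f_{2B})\mathbbm{1}_{2B}+(f-f_{2B})\mathbbm{1}_{(2B)^c}+f_{2B}$ with the square-function estimate, the $(PUB)$ annular bounds, and Lemma~\ref{lem-1}(v) for the constant term (mirroring Theorems~\ref{cmo} and~\ref{moo}); the solution-to-trace direction via the mean value property, the reproducing identity $u(\cdot,t+s)=e^{-t\sqrt{\LV}}u(\cdot,s)$ enforced by the Liouville theorem, the $L^2$-duality/Calder\'on reproducing formula for the uniform slice bound, and Eberlein--\v{S}mulian extraction (mirroring Theorems~\ref{thm1}, \ref{hcmo}, \ref{bddmo}). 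You correctly identify the key Morrey bookkeeping mechanism, namely that $\mu(2^kB)^{(\lz-1)/2}$ decays geometrically by reverse doubling and that the $(1+2^kr_B/\rho(x_B))^{-N}$ factors in $(PUB)$ absorb the potentially growing critical weights $(1+2^kr_B/\rho(x_B))^{-\tau}$; these are precisely the points the paper trusts the reader to supply. I find no gap; this is the argument the paper is implicitly invoking.
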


\begin{remark}
When $(X,d,\mu)=(\rn,|\cdot|,\d x)$, even if $\tau=0$, our Theorem \ref{thm4} is new.
Moreover, a combination of Theorems \ref{thm3} and \ref{thm4} indicates that
for every $\mathbb{L}$-harmonic function $u$ on $\rn\times \rr_+$, it holds
$$
\sup_{B} \lf(\frac{1}{\mu(B)^\lz}\int_0^{r_B}\int_B  |t\nabla_{x,t} u|^2\d \mu\frac{\d t}{t}\r)^{1/2}\lf(1+\frac{r_B}{\rho(x_B)}\r)^\tau<\fz
\Longleftrightarrow \sup_{t>0}\sup_{B}\lf(\frac{1}{\mu(B)^\lz}\int_B|u|^2\d \mu\r)^{1/2}\lf(1+\frac{r_B}{\rho(x_B)}\r)^\tau<\fz.
$$
This fact surprises somewhat us since we can use the integral average of an $\mathbb{L}$-harmonic function
to characterize the integral average of its derivative.
\end{remark}

Bypassing the Campanato spaces associated to the operator, we can define the new square Morrey space related to $\rho$ as follows.
\begin{definition}\label{def3}
For any  $0<\lz<1$,
a locally integrable function $f$ defined on $X$ is said to be in $L^{2,\lz}_{\rho}=L^{2,\lz}_{\rho}(X)$,
the square Morrey space related to $\rho$, if $f$ satisfies
$$
\|f\|_{L^{2,\lz}_{\rho}}=\sup_{B:r_B< \rho(x_B)}\lf(\frac{1}{\mu(B)^\lz}\int_B|f-f_B|^2\d \mu\r)^{1/2}
 +\sup_{B:r_B\ge \rho(x_B)}\lf(\frac{1}{\mu(B)^\lz}\int_B|f|^2\d \mu\r)^{1/2}<\fz.
$$
\end{definition}

\begin{theorem}\label{thm5}
Let $(X,d,\mu,\mathscr{E})$ be a complete Dirichlet metric measure space satisfying a doubling property $(D)$ and admitting an $L^2$-Poincar\'{e} inequality $(P_2)$.
Suppose that $0\le V\in A_\infty(X)\cap RH_{q}(X)$ for some $q> \max\{Q/2,1\}$.
If $0<\lz<1$, then the following statements are equivalent.
\begin{enumerate}
   \item[{(i)}] the function $f$ is in $L^{2,\lz}_0(X)$.
 \item[{(ii)}] the function $f$ is in $L^{2,\lz}_\rho(X)$.
 \item[{(iii)}] the Poisson integral $e^{-t\sqrt{\LV}}f$ is in $\mathrm{HMO}^{2,\lz}_0(X\times \rr_+)$
 for some $f\in L^2(X,(1+d(x,x_0))^{-Q-\bz}\d\mu(x))$ with $x_0\in X$ and $\bz>0$.
\end{enumerate}
%
%
Moreover, it holds
$$\|f\|_{L^{2,\lz}_0}\approx\|f\|_{L^{2,\lz}_\rho}\approx\big\|e^{-t\sqrt{\LV}}f\big\|_{\mathrm{HMO}^{2,\lz}_0}.$$
\end{theorem}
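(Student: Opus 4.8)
The plan is to deduce Theorem \ref{thm5} from two ingredients: the trace characterisation of variant harmonic mean oscillation functions already in hand (Theorem \ref{thm4}, specialised to $\tau=0$), and an intrinsic identification of the two function spaces on $X$, namely $L^{2,\lz}_0(X)=L^{2,\lz}_\rho(X)$ with comparable norms. Granting the latter, the equivalence (i)$\Leftrightarrow$(iii) is almost immediate from Theorem \ref{thm4}. First I would check that any $f\in L^{2,\lz}_0(X)$ lies in $L^2(X,(1+d(x,x_0))^{-Q-\bz}\d\mu)$ for every $\bz>0$: decomposing $X$ into the dyadic annuli $B(x_0,2^{k})\setminus B(x_0,2^{k-1})$, bounding $\int_{B(x_0,2^{k})}|f|^2\le\mu(B(x_0,2^{k}))^\lz\|f\|_{L^{2,\lz}_0}^2$ and using the upper-dimension estimate $\mu(B(x_0,2^{k}))\le C2^{kQ}\mu(B(x_0,1))$, the resulting series $\sum_{k}2^{-k(Q(1-\lz)+\bz)}$ converges since $\lz<1$. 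Hence $e^{-t\sqrt{\LV}}f$ is well defined and $\mathbb{L}$-harmonic, and Theorem \ref{thm4} gives $e^{-t\sqrt{\LV}}f\in\mathrm{HMO}^{2,\lz}_0(X\times\rr_+)$ with $\|e^{-t\sqrt{\LV}}f\|_{\mathrm{HMO}^{2,\lz}_0}\le C\|f\|_{L^{2,\lz}_0}$. Conversely, if $f$ lies in the stated weighted $L^2$ class and $e^{-t\sqrt{\LV}}f\in\mathrm{HMO}^{2,\lz}_0$, then Theorem \ref{thm4} produces $g\in L^{2,\lz}_0(X)$ with $e^{-t\sqrt{\LV}}g=e^{-t\sqrt{\LV}}f$; since a Morrey function also lies in that weighted class (same annuli computation), and $p^\LV_t$ is an approximate identity there (the limit $\lim_{t\to0}e^{-t\sqrt{\LV}}h=h$ in $L^2_{\mathrm{loc}}$, cf. the proof of Lemma \ref{lem1}(ii)), one gets $f=g\in L^{2,\lz}_0(X)$.

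The real content is therefore the identity $L^{2,\lz}_0(X)=L^{2,\lz}_\rho(X)$. The inclusion $L^{2,\lz}_0(X)\subset L^{2,\lz}_\rho(X)$ with $\|f\|_{L^{2,\lz}_\rho}\le2\|f\|_{L^{2,\lz}_0}$ is trivial: on every ball $\int_B|f-f_B|^2\le\int_B|f|^2$, and the large-ball terms of the two norms coincide verbatim. For the reverse inclusion I would run a telescoping argument climbing from a small ball to its critical ball. Fix $B=B(x_B,r_B)$ with $r_B<\rho(x_B)$, let $J\ge1$ be least with $2^Jr_B\ge\rho(x_B)$, and set $B_j=B(x_B,2^jr_B)$ for $0\le j\le J$; note $B_0,\dots,B_{J-1}$ are small balls and $B_J$ is a large ball. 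By the doubling property, for $0\le j<J$,
$$|f_{B_j}-f_{B_{j+1}}|\le C_D\Big(\fint_{B_{j+1}}|f-f_{B_{j+1}}|^2\d\mu\Big)^{1/2}\le C\,\mu(B_{j+1})^{(\lz-1)/2}\|f\|_{L^{2,\lz}_\rho},$$
where for $j<J-1$ one uses the small-ball part of $\|\cdot\|_{L^{2,\lz}_\rho}$ on $B_{j+1}$, and for $j=J-1$ one uses $|f-f_{B_J}|\le|f|+|f_{B_J}|$ together with the large-ball part on $B_J$; similarly $|f_{B_J}|\le(\fint_{B_J}|f|^2\d\mu)^{1/2}\le\mu(B_J)^{(\lz-1)/2}\|f\|_{L^{2,\lz}_\rho}$. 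Since $(RD)$ gives $\mu(B_{j+1})\ge c\,2^{(j+1)n}\mu(B)$ and $n(1-\lz)/2>0$, the telescoping series $\sum_{j\ge0}\mu(B_{j+1})^{(\lz-1)/2}$ sums geometrically to $\le C\mu(B)^{(\lz-1)/2}$, so $|f_B|\le C\mu(B)^{(\lz-1)/2}\|f\|_{L^{2,\lz}_\rho}$, whence
$$\Big(\int_B|f|^2\d\mu\Big)^{1/2}\le\Big(\int_B|f-f_B|^2\d\mu\Big)^{1/2}+\mu(B)^{1/2}|f_B|\le C\,\mu(B)^{\lz/2}\|f\|_{L^{2,\lz}_\rho}.$$
Together with the (identical) large-ball estimate this yields $\|f\|_{L^{2,\lz}_0}\le C\|f\|_{L^{2,\lz}_\rho}$. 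Chaining the two inclusions with the comparison from Theorem \ref{thm4} gives $\|f\|_{L^{2,\lz}_0}\approx\|f\|_{L^{2,\lz}_\rho}\approx\|e^{-t\sqrt{\LV}}f\|_{\mathrm{HMO}^{2,\lz}_0}$, completing the proof.

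The hard part will be the reverse Morrey inclusion (ii)$\Rightarrow$(i), that is, recovering the size of the average $f_B$ over a small ball purely from oscillation data at small scales plus size data at the critical scale. The delicate points are the bookkeeping around the critical function $\rho$ — identifying the top ball $B_J$ as the unique transitional ball (where only size, not oscillation, is controlled) and making the ``climb to the critical ball'' well posed by invoking the slow variation of $\rho$ in \eqref{critical} — and the fact that the convergence of the telescoping geometric series is precisely where the reverse doubling lower dimension $n>1$ enters.
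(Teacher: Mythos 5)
Your proposal is correct, but it closes the cycle of implications differently from the paper. The paper's own proof observes that (i)$\Rightarrow$(ii) is definitional and (iii)$\Rightarrow$(i) follows from Theorem \ref{thm4}, and then takes the remaining hard step to be (ii)$\Rightarrow$(iii), proved by running the Poisson-integral/Carleson-measure argument of \cite[Proposition 5.3]{JL2022} directly on an $L^{2,\lz}_\rho$ datum (splitting $f=(f-f_B)+f_B$ and using the kernel decay estimates); that step is only sketched by reference. You instead make the remaining hard step the purely real-variable identification $L^{2,\lz}_\rho(X)\subset L^{2,\lz}_0(X)$, recovering $|f_B|$ for a small ball by telescoping over the concentric balls $B_j=B(x_B,2^jr_B)$ up to the first large ball $B_J$, with the geometric convergence supplied by reverse doubling. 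This is a legitimate and arguably cleaner closure: it needs no kernel estimates at all for the new implication, and it makes the space identity $L^{2,\lz}_0=L^{2,\lz}_\rho$ explicit rather than a by-product of the cycle; the price is that you lean on Theorem \ref{thm4} for both directions of (i)$\Leftrightarrow$(iii), including the trace identification $f=g$, which you handle at the same level of rigor as the paper does. Two small remarks: since all the $B_j$ are concentric, the slow-variation estimate \eqref{critical} is not actually needed to make the climb well posed; and the telescoping sum only requires a positive reverse-doubling exponent together with $\lz<1$, so $n>1$ is not really what is being used there.
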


\begin{proof}
By the definitions of two classes of Morrey functions,  and Theorem \ref{thm4} (or see \cite[Theorem 3]{SL2022}),
we know that (i) implies (ii), and (iii) implies (i).
Therefore, to show Theorem \ref{thm5}, it suffices to prove that (ii) implies (iii).
The remainder of the argument is analogous to that in \cite[Proposition 5.3]{JL2022}, and is left to the interested reader.
This completes the proof.
\end{proof}

\begin{remark}
An important observation is that, when considering the metric measure space case,
the Morrey space $L^{2,\lz}_0(X)$ in Definition \ref{def1} and another Morrey space $L^{2,\lz}_\fz(X)$ in Definition \ref{def3} (i.e., $V=0$)
always coincide provided the underlying space admits the Ahlfors regular; see \cite{Na2006} for example.
However, in the Schr\"{o}dinger setting, we can relax the Ahlfors regularity to a doubling property due to the additional decay from $V$.
\end{remark}

\addcontentsline{toc}{section}{Acknowledgement}
\begin{flushleft}
\textbf{Acknowledgement.}
Bo Li is supported by NNSF of China (12471094,12201250), Zhejiang NSF of China (LQ23A010007), Jiaxing NSF of China (2023AY40003) and Qinshen Scholar Program of Jiaxing University.
Ji Li is supported by ARC DP 220100285.
Liangchuan Wu is  supported by NNSF of China (12201002) and  Excellent University Research and Innovation Team in Anhui Province (2024AH010002).
\end{flushleft}

%

\noindent B. Li (bli@zjxu.edu.cn), Department of Mathematics, Jiaxing University, Jiaxing 314001, China

\noindent J. Li (ji.li@mq.edu.au), Department of Mathematics, Macquarie University, Macquarie Park, NSW 2109, Australia

\noindent L. Wu (wuliangchuan@ahu.edu.cn), School of Mathematical Science, Anhui University, Hefei 230601, China

\end{document}